\newcommand{\ad}{\mathrm{ad}}
\newcommand{\supp}{\mathrm{supp}}
\newcommand{\diam}{\mathrm{diam}}
\newcommand{\ve}{\varepsilon}
\newcommand{\pp}{{\mathcal P} }
\theoremstyle{plain}
\newtheorem{thm}{Theorem}[section]
\newtheorem{theorem}[thm]{Theorem}
\newtheorem{lem}[thm]{Lemma}
\newtheorem{lemma}[thm]{Lemma}
\newtheorem{prop}[thm]{Lemma}
\newtheorem{cor}[thm]{Corollary}
\newcommand\di[1]{\hbox{$#1$-dim}}
\newcommand\diko[1]{\hbox{$#1$-KOdim}}
\theoremstyle{definition}
\newtheorem{defin}[thm]{Definition}
\newtheorem{rem}[thm]{Remark}
\newtheorem{remark}[thm]{Remark}
\newtheorem{ex}[thm]{Example}
\newtheorem{question}[thm]{Question}
\newcommand{\topp}{{\bf top}}
\newcommand{\bott}{{\bf bot}}
\DeclareMathOperator{\dist}{{\rm dist}}
\DeclareMathOperator{\as}{{\rm asdim}}
\def\mod{\protect\operatorname{mod}}
\def\scr{\mathcal}
\def\Z{{\mathbb Z}}
\def\R{{\mathbb R}}
\def\N{{\mathbb N}}
\def\la{\langle}
\def\ra{\rangle}
\long\def\forget#1\forgotten{} %
\newcommand{\iv}{^{-1}}
\newcommand{\DG}{{\mathcal D}}
\newcommand\ver[1]{\marginpar{\tiny Changed in Ver \VER}}
\date{\today}
\begin{document}
\title[Dimension growth
]{On the dimension growth of groups}

\author[A.~Dranishnikov]{Alexander  Dranishnikov$^{1}$}

\author[M.~Sapir]{ Mark Sapir$^{2}$}

\thanks{$^{1}$Supported by NSF, grant DMS-0904278, $^{2}$Supported by NSF,
grant DMS-0700811. Both authors are thankful to the Max-Planck Institute fur Mathematik in Bonn for the hospitality.}

\address{Alexander N. Dranishnikov, Department of Mathematics, University
of Florida, 358 Little Hall, Gainesville, FL 32611-8105, U.S.A.}
\email{dranish@math.ufl.edu}

\address{Mark Sapir, Department of Mathematics, Vanderbilt University, Nashville, TN 37240,
 U.S.A.} \email{m.sapir@vanderbilt.edu}

\subjclass[2000]
{Primary 20F69; 
}

\keywords{Dimension growth, asymptotic dimension, Thompson group, {wreath product}}

\begin{abstract}
The (asymptotic) dimension growth functions of groups were introduced by Gromov in 1999. In this paper, we 
show connections between dimension growth
and expansion properties of graphs, Ramsey theory and the Kolmogorov-Ostrand dimension of groups and prove that all solvable subgroups of the R.Thompson group $F$ have polynomial dimension growth. 
We introduce controlled dimension growth function and prove that the exponentially controlled dimension growth is exponential for the Thompson group $F$ and some solvable of class 3 groups. The paper contains many open questions.
\end{abstract}

\maketitle

\tableofcontents

\section{Introduction}

Gromov introduced the notion of asymptotic dimension \cite{Gr1} to study finitely generated groups. It turns out that
groups with finite asymptotic dimension satisfy many famous conjectures like the Novikov Higher Signature Conjecture
\cite{Yu1}, \cite{Ba}, \cite{CG}, \cite{Dr1}, \cite{DFW}, \cite{BR}.
Many of the popular types of groups have finite asymptotic dimension. Such are hyperbolic groups \cite{Gr1}
 virtually polycyclic groups (hence nilpotent groups), and
solvable groups with finite rational Hirsch length \cite{BD},
\cite{DS}, Coxeter groups \cite{DJ}, arithmetic subgroups of algebraic groups over $\mathbb Q$ \cite{Ji}, any finitely
generated linear group over a field of positive
characteristic \cite{GTY}, relatively hyperbolic groups  with parabolic subgroup of finite asymptotic dimension
\cite{Os}, mapping class groups
\cite{BBF}, group acting ``nicely" on finite dimensional CAT(0) cubical complexes \cite{W}, etc.
Examples of asymptotically infinite dimensional groups include wreath product $\Z\wr \Z$ (and every other wreath
product $A\wr B$ where $A$ is not torsion and $B$ is infinite), Thompson group $F$, Grigorchuk's groups, Gromov's group
containing an expander. The infinite dimensionality of the wreath products and $F$ easily follows from the fact that
for every $n$, these groups contain $\Z^n$ as a subgroup. Grigorchuk group does not contain $\Z$ since it is a torsion
group. It is infinite dimensional because for every $n$ it coarsely contains $\R_+^n$ \cite{Sm}. This argument does not
apply to Gromov's groups containing expanders, since they can have finite cohomological dimension \cite{Gr2}; the
infinite dimensionality of them follows from the fact that these groups do not satisfy Yu's property A defined below
(some of them do not even coarsely embed into Hilbert spaces) while all groups of finite asymptotic dimension satisfy
property A \cite{HR}, see also Corollary \ref{cor:1} below.

The dimension theoretic approach still could be useful in the case of asymptotically infinite dimensional groups. Thus,
in \cite{Gr56} the notion of asymptotic dimension growth was introduced. It was shown in \cite{Dr2} that the groups with
polynomial asymptotic dimension growth have property A and some examples of groups with infinite asymptotic dimension and polynomial dimension growth were constructed. In particular, the Novikov conjecture is true for them.
Recently, answering our question, Ozawa extended this result to all groups with subexponential dimension growth
\cite{Ozawa}.

Property A (already mentioned) was introduced by Guoliang Yu \cite{Yu2}. It can be viewed as non-equivariant version of
amenability. Recall that a group (or a metric space) $X$ satisfies property $A$ if for every $\varepsilon$ and $R$
there exists a function $\xi$ from $X$ to $\ell_1(X)$ with $\xi(x)\ge 0$,  $||\xi(x)||=1$, $x\in \supp \xi(x)\subseteq
B(x,S)$ for some $S=S(\varepsilon,R)$ for every $x\in X$, so that for every two points $x,y\in X$ at distance at most
$R$, $||\xi(x)-\xi(y)|| \le \varepsilon$. A group $X$ is amenable if one can choose the functions $\xi(x)\in \ell_1(X)$
with the above property to be shifts of the properly rescaled indicator function of a bounded set of elements
containing the identity (a F\"olner set) by elements of the group $X$.

Although most known groups do have property A, for some groups like the R. Thompson group $F$ property A is still not
known and is considered almost as hard as amenability. Property $A$ implies coarse embedability of a group into the
Hilbert space.  In fact, for finitely presented groups, it is still an open question whether it is equivalent to coarse
embedability into a Hilbert space. In view of D. Farley's result \cite{Fa}, the R.Thompson group $F$ is coarsely (in
fact even equivariantly) embeddable into the Hilbert space. The compression number of such embeddings was computed in
\cite{AGS}, and unfortunately the answer lies exactly on the border
(=1/2) where it does not allow to derive property A \cite{GK}. Note that low compression number of a group does not
imply high dimension growth. For example, groups constructed in \cite{ADS} have finite asymptotic dimension and
compression number 0.

Thus the question about dimension growth of the R. Thompson group  is related to the famous amenability problem of
$F$.

\begin{defin} Let $\lambda$ be a positive number. We say that two points $x,y$ in a metric space $X$ are $\lambda$-{\em
connected} if there exists a sequence of points $x=z_0, z_1,...,z_n=y$ such that the distance between any two
consecutive points is at most $\lambda$ (that sequence will be called a $\lambda$-{\em path}). We call a set of points
$\lambda$-{\em cluster} if every two points in that set are $\lambda$-connected.
\end{defin}

\begin{defin}  Let $\lambda$ and $D$ be  positive numbers, $X$ be a metric space.  We say
that $\di{(\lambda,D)}(X)\le n$ if $X$ can be colored in at most $n+1$ colors so that every monochromatic
$\lambda$-cluster has diameter at most $D$.
\end{defin}

A coloring satisfying the above condition will be called a $(\lambda,D)$-{\em coloring} of $X$ in $n+1$ colors. The
maximal monochromatic $\lambda$-clusters of the coloring will be simply called its {\em $\lambda$-clusters}.

\begin{remark}\label{r:90} Note that we do not insist that every point is colored in only one color. Nevertheless if a
point is colored in several colors, we can pick any of them, making the point colored in only one color. As a result,
the sizes of clusters can get only smaller. So we can always assume, when dealing with the dimension growth that we
color the metric space $X$ in such a way that each point is colored in one color. In Section \ref{above}, we shall
consider a version of this function for which it will be essential that points are colored in several colors.
\end{remark}

As usual, we say that two non-decreasing functions $f,g:\R_+\to\R_+$ have the same {\em growth} if there are positive
constants $a,c$ such that $f(at)\ge g(t)-c$ and $g(at)\ge f(t)-c$ for all $t$. Clearly, this is an equivalence relation
on the set of all monotone functions. The equivalence class of a function $f$ is called the {\em growth} of that
function.

We denote by $d_X(\lambda)=\inf\{\di{(\lambda,D)}(X)\mid D\in\R_+\}$.

\begin{defin} The growth of the function $d_X(\lambda)$ is
called {\em the dimension growth} of $X$.

For a function $D=D(\lambda)$ the growth of the function $$d_{D,X}(\lambda)=\di{(\lambda,D(\lambda))}(X)$$ is called the
{\em $D$-controlled dimension growth} of $X$.
\end{defin}

{The upper limit $\limsup_{\lambda\to\infty}d_X(\lambda)$ is called {\em the
asymptotic dimension} of $X$ and is denoted  $asdim(X)$. Note that $asdim(X)$ takes values in $\N_+\cup\{\infty\}$.}

The upper limit $\limsup_{\lambda\to\infty}\di{(\lambda,D(\lambda))}(X)$ is called the $D(\lambda)$-{\em controlled
asymptotic dimension} of $X$. In particular, when $D$ is linear it is called the linearly controlled dimension or the
{\em Assouad-Nagata dimension} \cite{A}. Note that the study of asymptotic dimension with control functions was proposed by Gromov in~\cite{Gr1}.
{Some research in this direction mostly motivated by the Assouad-Nagata dimension
was done in~\cite{Brod}.}

Note that the definition of dimension function in \cite{Dr2} is similar but different: the {\em asymptotic dimension
growth} $\ad_X(\lambda)$ from \cite{Dr2} is the minimal dimension of the nerve of a uniformly bounded cover of $X$ with
the Lebesgue number $\ge\lambda$. By taking a $\lambda/2$-enlargement of a colored cover with $\lambda$-{\em disjoint}
colors (i.e. the diameters of all $\lambda$-clusters are uniformly bounded) one can construct a cover of the same
multiplicity with the Lebesgue number $\ge\lambda/2$. This yields the inequality $\ad_X(\lambda/2)\le d_X(\lambda)$.
Therefore, the growth of $\ad_X$ does not exceed the growth of $d_X$.

\begin{question} Is  the opposite inequality true, i.e. is the dimension growth equal to the asymptotic dimension
growth (for groups or general metric spaces)?
\end{question}

Certainly the answer is ``yes" when $\ad_X$ is a constant. In that case both functions give alternative definitions of
the asymptotic dimension. Thus, the functions $d_X$ and $\ad_X$ generalize two definitions of asymptotic dimension to
metric spaces with infinite asymptotic dimension.

\begin{remark}\label{rem:Z} Note that the function $\di{(\lambda,D)}(X)$ substantially depends on the control function $D$. For
example, the asymptotic dimension of $\Z$ with control $\lambda+1$ is equal to 1, but
$\di{(\lambda,\sqrt{\lambda})}(\Z)\approx \sqrt{\lambda}$ and $\di{(\lambda,0)}(\Z)=\lambda$.

{Thus, it makes little sense to consider control $D \le \lambda$.} Note also that if the control function is constant the dimension growth is equivalent to the volume growth (see Lemma \ref{dim-vs-vol} below).
\end{remark}
{\begin{question}\label{asdim} Is exponential control sufficient for detection the asymptotic dimension of a finitely generated  group?
{In more formal way, is it true that for every finitely generated group $G$
$$
\limsup_{\lambda\to\infty}\di{(\lambda,D)}(G)= \as(G)
$$
for some exponential function $D$?
}

\end{question}

Another dimension function was defined in ~\cite{CFY}: the asymptotic dimension growth $f(\lambda)$ of a metric space
$X$ is the infimum over all $n$  for which there is a uniform bounded cover $\scr U$ such that, for every $x\in X$, the
ball $B_r(x)$ intersects at most $n+1$ members of $\scr U$. It is easy to see that $f(\lambda/2)\le \ad_X(\lambda)\le
f(2\lambda)$ and hence $f$ and $\ad_X$ have the same growth.

The dimension growth, the controlled dimension growth, and asymptotic dimension growth ($d_X$, $d_{D,X}$ for all $D$,
and $\ad_X$) are quasi-isometry invariants (if we identify equivalent control functions $D$) and therefore they are
invariants of finitely generated groups.

In this paper we study the dimension growth of wreath products and the R.Thompson group $F$. The dimension growth of
$F$ turns out to be exponential for an exponential control. Note that this does not imply the exponential asymptotic
dimension growth of $F$ although the answer to the following question {(related to Question~\ref{asdim})} is unknown.

\begin{question} Are there finitely generated groups (metric spaces) $X$ with exponentially controlled exponential
dimension growth and subexponential (uncontrolled) dimension growth $d_X$? In particular, are there finitely generated
groups with finite asymptotic dimension and super-linearly (or even exponentially) controlled exponential dimension
growth.
\end{question}



{\bf Acknowledgement.} The authors are grateful to Robin Chapman, Victor Guba,  Justin Moore, Dmitri Panov, and Will Sawin for
helpful remarks and to Alexander Olshanskii and Denis Osin for spotting some mistakes in the previous version.

\section{Preliminaries}

\subsection{Volume growth and dimension growth}

We recall that the {\em chromatic number} of a graph is the minimal number of colors (if exists) such that the vertices
of the graph can be colored in a way that adjacent vertices have different colors.
\begin{prop}\label{coloring}
Let $K$ be a possibly infinite graph of valency $\le c$. Then its chromatic number $\le c+1$.
\end{prop}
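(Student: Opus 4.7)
The plan is to prove this by a standard greedy coloring argument, handling the possibly infinite case by transfinite induction or by the De Bruijn--Erd\H{o}s compactness theorem.

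First I would handle the finite case. Fix an arbitrary linear order on the (finitely many) vertices of $K$, and color them one at a time in this order. When coloring a vertex $v$ using the palette $\{1,\dots,c+1\}$, at most $c$ neighbors of $v$ have been colored already (since the valency is at most $c$), so at most $c$ colors are forbidden and at least one of the $c+1$ colors is available. Assigning such a color to $v$ extends the proper partial coloring. After finitely many steps one obtains a proper coloring of $K$ with at most $c+1$ colors.

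For the infinite case I would use either of two arguments. The cleanest is to invoke the De Bruijn--Erd\H{o}s theorem: a graph is $k$-colorable if and only if every finite subgraph is. Every finite induced subgraph of $K$ again has valency at most $c$, hence by the previous paragraph is $(c+1)$-colorable, so $K$ itself is $(c+1)$-colorable. Alternatively, one can well-order the vertices of $K$ and proceed by transfinite induction, at each step choosing for the current vertex a color from $\{1,\dots,c+1\}$ not used by its (at most $c$) already-colored neighbors.

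There is no real obstacle here; the only point worth noting is that the ``possibly infinite'' hypothesis forces one either to appeal to a compactness argument or to use the axiom of choice (well-ordering) to run the greedy procedure, whereas in the finite case the greedy algorithm with any enumeration of the vertices works directly.
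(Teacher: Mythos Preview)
Your proposal is correct. The core local observation---a vertex has at most $c$ already-colored neighbors, so one of $c+1$ colors is always available---is the same as in the paper. The paper packages the infinite case via Zorn's lemma (take a maximal properly $(c+1)$-colored induced subgraph and observe it must be all of $K$), which is essentially your transfinite-induction greedy argument rephrased. Your alternative via De Bruijn--Erd\H{o}s is a genuinely different route: it trades the direct transfinite construction for a compactness reduction to the finite case, which is arguably cleaner but invokes a stronger off-the-shelf result. Either way the content is the same elementary fact, and the differences are purely in how the axiom of choice is deployed.
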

\begin{proof}
Using the Zorn lemma, take a maximal $(c+1)$-colorable induced subgraph $K'$ of $K$. Suppose that $K'\ne K$. Any vertex
$v$ of $K$ that is not in $K'$ has at most $c$ colored neighbors and hence it can be colored and added to $K'$. That
contradicts the maximality of $K'$.
\end{proof}

\begin{remark}
We note that for $D\ge D'$, $d_{D,X}\le d_{D',X}$, also $d_X\le d_{D,X}$ for all $D$.
\end{remark}

A version of the next proposition for the function $\ad$ is proved in \cite{Dr2}.

\begin{prop}\label{dim-vs-vol}
The dimension growth  of a finitely generated group $G$ with any control function does not exceed its volume growth.
\end{prop}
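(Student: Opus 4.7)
The plan is a one-step reduction to the graph-coloring bound of Proposition~\ref{coloring}. For a scale $\lambda>0$, let $V(\lambda)=|B(e,\lambda)|$ be the volume growth function of $G$, and build the auxiliary graph $\Gamma_\lambda$ whose vertex set is $G$ and in which two distinct elements $g,h$ are adjacent exactly when $d(g,h)\le\lambda$. Left-invariance of the word metric forces every vertex of $\Gamma_\lambda$ to have the same valency, namely $V(\lambda)-1$, so Proposition~\ref{coloring} yields a proper coloring of $\Gamma_\lambda$ using at most $V(\lambda)$ colors.

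In any such proper coloring, any two distinct same-colored elements of $G$ are at distance strictly greater than $\lambda$. Since a monochromatic $\lambda$-cluster is obtained by chaining same-colored points through $\lambda$-steps that stay within the cluster, every monochromatic $\lambda$-cluster in this coloring reduces to a single point. This exhibits a $(\lambda,0)$-coloring in $V(\lambda)$ colors, giving
\[
\di{(\lambda,0)}(G)\le V(\lambda)-1.
\]
Invoking the remark preceding the proposition, which records that $d_{D,G}(\lambda)$ is non-increasing in $D$, I then get
\[
d_{D,G}(\lambda)\le\di{(\lambda,0)}(G)\le V(\lambda)-1
\]
for every control function $D\ge 0$. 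Passing to growth equivalence classes shows that the $D$-controlled dimension growth of $G$ is dominated by its volume growth, which is the assertion.

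No serious obstacle is expected, since the argument is essentially an unpacking of definitions plus Proposition~\ref{coloring}. The single point that needs care is the reading of the term ``$\lambda$-cluster'': the connecting $\lambda$-path must stay inside the cluster, not wander through arbitrary points of $G$; otherwise in a Cayley-graph-like space every monochromatic set would form one cluster and the coloring strategy would be vacuous. This reading is precisely the one consistent with the sample computation $\di{(\lambda,0)}(\Z)=\lambda$ recorded in Remark~\ref{rem:Z}.
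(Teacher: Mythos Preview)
Your argument is correct and is essentially the same as the paper's: build the Rips-type graph on $G$ at scale $\lambda$, bound its valency by the volume growth, apply Proposition~\ref{coloring} to obtain a proper coloring, and observe that monochromatic $\lambda$-clusters are singletons. Your version is in fact slightly sharper in the constant (valency $V(\lambda)-1$ rather than $\le f(\lambda)$, hence $V(\lambda)$ colors rather than $f(\lambda)+1$), and your closing remark about the reading of ``$\lambda$-cluster'' is well taken and matches the paper's later explicit restatement before Lemma~\ref{lm:cube}.
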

\begin{proof} Let $f$ be the volume growth function of $G$ (relative to some finite generating set). We consider a
graph $R_\lambda(G)$ whose vertices are all elements of $G$ where every two vertices at distance $\le\lambda$ are
joined by an edge (this is of course the 1-skeleton of the Rips complex of $G$). Then the valency of every vertex of
$R_\lambda(G)$ is $\le f(\lambda)$. By Lemma~\ref{coloring} the graph has chromatic number $\le f(\lambda)+1$.
Thus, a coloring of the graph in $f(\lambda)+1$ colors is a coloring of the Cayley graph of $G$ with monochromatic
$\lambda$-clusters of diameter 0. Note that $f(\lambda)+1$ is equivalent to $f(\lambda)$.
\end{proof}

The following corollary was mentioned in \cite{Gr56} as obvious.

\begin{cor} The dimension growth of any finitely generated group with any control function is at most
exponential.
\end{cor}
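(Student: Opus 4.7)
The plan is to invoke Proposition~\ref{dim-vs-vol} directly and then use the elementary bound on the volume growth of a finitely generated group. By that proposition, for any control function $D$, the function $d_{D,G}(\lambda)$ is dominated (in the growth sense) by the volume growth function $f(\lambda)$ of $G$ with respect to any fixed finite generating set.

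So the only remaining step is to observe that $f$ itself is at most exponential. If $S$ is a finite symmetric generating set with $|S| = k$, then the ball of radius $\lambda$ in the Cayley graph has at most $1 + k + k^2 + \dots + k^\lambda \le (k+1)^\lambda$ elements, since each word of length at most $\lambda$ in the generators gives an element of the ball. Hence $f(\lambda) \le (k+1)^\lambda$, which is an exponential function of $\lambda$.

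Combining these two facts gives that $d_{D,G}(\lambda) \preceq f(\lambda) \preceq (k+1)^\lambda$, so the dimension growth of $G$ with any control function $D$ is at most exponential.

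The main (and only) obstacle is conceptual rather than technical: one must simply recognize that the corollary is an immediate consequence of Proposition~\ref{dim-vs-vol} combined with the standard exponential upper bound on word growth. No delicate combinatorial or geometric argument is needed here, since Proposition~\ref{dim-vs-vol} has already absorbed all the real content (the coloring argument via Proposition~\ref{coloring} applied to the Rips graph $R_\lambda(G)$).
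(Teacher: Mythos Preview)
Your proof is correct and matches the paper's approach exactly: the corollary is stated without a separate proof, being an immediate consequence of Lemma~\ref{dim-vs-vol} together with the elementary fact that the volume growth of a finitely generated group is at most exponential. Your argument fills in precisely the details the paper leaves implicit.
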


\subsection{Dimension growth and quasi-isometries}

{We recall that a map of metric spaces $\phi:X\to Y$ is called a {\em coarse embedding} if there are strictly monotone tending to
infinity functions $\rho_1,\rho_2:\R_+\to\R_+$ and a number $r>0$ such that $$
\rho_1(d_X(x,x'))\le d_Y(\phi(x),\phi(x'))\le\rho_2(d_X(x,x'))
$$ for all $x,x'\in X$ with $d(x,x')\ge r$. A typical example of a coarse embedding is an inclusion of a finitely generated subgroup in a finitely generated group
both supplied with the word metrics. In that case $r=0$, $\rho_2$ is linear, and $\rho_1$ could with any greater than linear
growth of $\rho_1^{-1}$.

\begin{prop}\label{p:1-Lip} Let $\phi:G'\to G$ be a 1-Lipschitz isomorphism of  finitely generated groups. Then $\di{\lambda}\left(G'\right)\le\di{\lambda}\left(G\right)$.
\end{prop}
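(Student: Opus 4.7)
The plan is to pull back a coloring from $G$ to $G'$ via $\phi$. Given a $(\lambda,D)$-coloring $c$ of $G$ with $n+1$ colors realizing $\di{(\lambda,D)}(G)=n$, define $c'=c\circ\phi$ on $G'$; since $\phi$ is a bijection this is well-defined. Because $\phi$ is $1$-Lipschitz, for any $\lambda$-path $x_0,x_1,\dots,x_k$ in $G'$ one has $d_G(\phi(x_i),\phi(x_{i+1}))\le d_{G'}(x_i,x_{i+1})\le\lambda$, so $\phi(x_0),\dots,\phi(x_k)$ is a $\lambda$-path in $G$, and colors are preserved by construction. Hence every monochromatic $\lambda$-cluster $S\subseteq G'$ is mapped by $\phi$ into a monochromatic $\lambda$-cluster of $G$, giving $\diam_G(\phi(S))\le D$.

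To convert this into a bound on $\diam_{G'}(S)$ I would invoke the standard fact that a $1$-Lipschitz isomorphism between finitely generated groups is automatically bi-Lipschitz: if $S'$ is a finite generating set of $G'$ and $S$ a fixed finite generating set of $G$, then $\phi(S')$ generates $G$, so each $s\in S$ is a product of at most some uniformly bounded number $L$ of elements of $\phi(S')^{\pm1}$, which makes $\phi^{-1}$ an $L$-Lipschitz map. Therefore $\diam_{G'}(S)=\diam_{G'}\bigl(\phi^{-1}(\phi(S))\bigr)\le L\cdot\diam_G(\phi(S))\le LD$, and $c'$ is a $(\lambda,LD)$-coloring of $G'$ in $n+1$ colors.

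This yields $\di{(\lambda,LD)}(G')\le\di{(\lambda,D)}(G)$, so after taking the infimum over $D\in\R_+$ we obtain $d_{G'}(\lambda)\le d_G(\lambda)$, which is the claimed inequality read with $\di{\lambda}$ interpreted as $d_{\cdot}(\lambda)$. No step is really hard here; the only piece requiring a small finite-generation argument is the extraction of the Lipschitz constant $L$ for $\phi^{-1}$, and without this ingredient the clusters $S$ would only be shown to have controlled $G$-diameter after applying $\phi$, not controlled $G'$-diameter as required by the definition of $\di{(\lambda,\cdot)}(G')$.
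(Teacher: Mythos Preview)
Your proof is correct and follows the same approach as the paper: pull back a $(\lambda,D)$-coloring of $G$ via $\phi$, use that $\phi$ is $1$-Lipschitz to see that monochromatic $\lambda$-clusters in $G'$ land inside monochromatic $\lambda$-clusters in $G$, and then take the infimum over $D$. The paper's one-line argument suppresses the step you spell out---that a $1$-Lipschitz isomorphism of finitely generated groups is automatically bi-Lipschitz, so that ``uniformly bounded'' is preserved under $\phi^{-1}$---so your version is in fact the more complete of the two.
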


\proof It is enough to note that the $\phi$-preimage of every uniformly bounded $\lambda$-disjoint family in $G$ is a uniformly bounded and $\lambda$-disjoint family in $G'$.
\endproof

\begin{prop}\label{p:6}
Let $\phi:X\to Y$ be a coarse embedding with functions $\rho_1,\rho_2$:
 Then
$$
\di{(\lambda,D)}(Y)\ge\di{(\rho^{-1}_2(\lambda),\rho_1^{-1}(D))}(X).
$$
\end{prop}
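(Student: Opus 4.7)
The natural approach is to pull back a near-optimal coloring of $Y$ via $\phi$ and argue that the distortion of $\phi$ converts the parameters $(\lambda,D)$ on the target into $(\rho_2^{-1}(\lambda),\rho_1^{-1}(D))$ on the source. Thus, fix a $(\lambda,D)$-coloring $\chi:Y\to\{0,1,\dots,n\}$ of $Y$ witnessing $\di{(\lambda,D)}(Y)\le n$, with the convention from Remark~\ref{r:90} that each point has exactly one color. Define a coloring of $X$ by $\widetilde\chi(x):=\chi(\phi(x))$; we must verify that every monochromatic $\rho_2^{-1}(\lambda)$-cluster in $X$ has diameter at most $\rho_1^{-1}(D)$.

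First I would push $\rho_2^{-1}(\lambda)$-paths forward. If $x=z_0,z_1,\dots,z_k=x'$ is a monochromatic $\rho_2^{-1}(\lambda)$-path in $X$, then by the upper estimate of the coarse embedding,
$$
d_Y(\phi(z_i),\phi(z_{i+1}))\le\rho_2\!\left(d_X(z_i,z_{i+1})\right)\le\rho_2\!\left(\rho_2^{-1}(\lambda)\right)=\lambda,
$$
so $\phi(z_0),\dots,\phi(z_k)$ is a monochromatic $\lambda$-path in $Y$. Consequently the $\phi$-image of any monochromatic $\rho_2^{-1}(\lambda)$-cluster $C\subseteq X$ is contained in a single monochromatic $\lambda$-cluster of $Y$, whose diameter is at most $D$ by hypothesis.

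Next I would pull this diameter bound back via the lower estimate. For any two points $x,x'\in C$, either $d_X(x,x')<r$, which is harmless (and irrelevant for the growth class), or else
$$
\rho_1(d_X(x,x'))\le d_Y(\phi(x),\phi(x'))\le D,
$$
so $d_X(x,x')\le\rho_1^{-1}(D)$. Hence $\diam(C)\le\max\{r,\rho_1^{-1}(D)\}$, which up to the harmless additive constant $r$ gives $\di{(\rho_2^{-1}(\lambda),\rho_1^{-1}(D))}(X)\le n$, as required.

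The only subtle point is the threshold $r$ in the definition of coarse embedding, which forces the bound on $\diam(C)$ to be $\max\{r,\rho_1^{-1}(D)\}$ rather than $\rho_1^{-1}(D)$ on the nose. Since the statement is understood up to the equivalence of growth classes (and since $r$ can always be absorbed by slightly enlarging $\rho_1^{-1}$), this causes no real trouble; if one wishes a cleaner bound, one replaces $\rho_1$ by any monotone minorant that agrees with $\rho_1$ beyond $r$ and is $\le\rho_1$ on $[0,r]$. Beyond this, the argument is essentially formal once the coloring of $X$ is defined by pullback.
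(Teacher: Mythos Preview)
Your proposal is correct and follows essentially the same approach as the paper: pull back a $(\lambda,D)$-coloring of $Y$ via $\phi$ and use the two inequalities of the coarse embedding to convert the parameters. The paper phrases the argument in terms of preimages of clusters rather than pushing forward $\rho_2^{-1}(\lambda)$-paths, and it handles the threshold $r$ simply by assuming $\lambda,D>r$ at the outset; otherwise the arguments are the same.
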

\begin{proof} We assume that $\lambda, D>r$. Suppose that $\di{(\lambda,D)}(Y)\le m$.
For any $\left(\lambda, D\right)$-coloring of $Y$ in $m$ colors $c:Y\to\{1,\dots,m\}$
the composition $c\circ \phi$ defines a coloring of $X$ in $m$ colors. Note that for any two clusters $C$ and $C'$ in $Y$ of the same color
$$
\lambda<d_Y(C,C')=\inf_{y\in C, y'\in C'}d_Y(y,y')\le \rho_2(d_X(\phi^{-1}(C),\phi^{-1}(C')))
$$
and
$$
\rho_1(diam_X(\phi^{-1}(C)))\le diam_Y(C)\le D.
$$
Therefore, $$\rho_2^{-1}(\lambda)<d_X(\phi^{-1}(C),\phi^{-1}(C'))\ \ \text{and}\ \ diam_X(\phi^{-1}(C))\le \rho_1^{-1}(D).$$
Thus, $c\circ \phi$ is a  $\left(\rho_2^{-1}(\lambda), \rho_1^{-1}(D)\right)$-coloring of $X$ bin $m$ colors. So,
$$\di{\left(\rho_2^{-1}(\lambda), \rho_1^{-1}(D)\right)}(X)\le m.$$ This completes the proof.
\end{proof}}

Applying Lemma \ref{p:6} to quasi-isometries gives

\begin{prop}\label{subgroup}
Let $\phi:X\to Y$ be a quasi-isometric embedding:
$$\frac{1}{c_1}d_X\left(x,x'\right)-r_1\le d_Y\left(\phi\left(x\right),\phi\left(x'\right)\right)\le cd_X\left(x,x'\right)+r $$ for
all $x,x'\in X$ where $c,c_1\ge 1, r,r_1\ge 0$. Then for $\lambda>r$,
$$
\di{\left(\lambda,D\right)}(Y)\ge\di{\left(\frac{\lambda-r}{c},c_1\left(D+r_1\right)\right)}(X).
$$
Thus if for some function $D$, the dimension growth of $Y$ with control $D(\lambda)$ is $d(\lambda)$, then the dimension growth of $X$ with control $c_1\left(D(c\lambda+r)+r_1\right)$ is at most $d(c\lambda+r)$.
\end{prop}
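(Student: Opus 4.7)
The plan is to simply recognize that a quasi-isometric embedding is a coarse embedding in the sense of Lemma~\ref{p:6}, identify the functions $\rho_1, \rho_2$, and compute their inverses.

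First, I would read off from the two inequalities that $\phi$ is a coarse embedding with $\rho_1(t)=t/c_1-r_1$ and $\rho_2(t)=ct+r$, and with constant $r=0$ in the sense of Lemma~\ref{p:6} (since the inequalities hold for all $x,x'$). These functions are strictly monotone and tend to infinity, so the hypotheses of Lemma~\ref{p:6} are satisfied.

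Next, I would compute the inverse functions: from $\rho_2(t)=ct+r$ one gets $\rho_2^{-1}(\lambda)=(\lambda-r)/c$, which is positive precisely when $\lambda>r$ (explaining the hypothesis on $\lambda$); from $\rho_1(t)=t/c_1-r_1$ one gets $\rho_1^{-1}(D)=c_1(D+r_1)$. Substituting these two expressions directly into the conclusion of Lemma~\ref{p:6} yields
$$\dim_{(\lambda,D)}(Y)\ge\dim_{((\lambda-r)/c,\,c_1(D+r_1))}(X),$$
which is the first assertion.

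For the second assertion (the statement about growth of the dimension function), I would take $\lambda$ and replace $\lambda$ by $c\lambda+r$ in the inequality just derived, so that the $X$-side argument becomes $((c\lambda+r)-r)/c=\lambda$ and the control becomes $c_1(D(c\lambda+r)+r_1)$. This gives
$$\dim_{(\lambda,\,c_1(D(c\lambda+r)+r_1))}(X)\le \dim_{(c\lambda+r,\,D(c\lambda+r))}(Y),$$
so the dimension growth of $X$ with control $c_1(D(c\lambda+r)+r_1)$ is bounded above by $d(c\lambda+r)$, as claimed.

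There is really no obstacle here — the proposition is just the specialization of Lemma~\ref{p:6} to quasi-isometric embeddings, whose only content is the explicit inversion of the linear-plus-constant comparison functions. The only minor point worth flagging is the side condition $\lambda>r$, which is needed for $\rho_2^{-1}(\lambda)$ to be meaningful (positive).
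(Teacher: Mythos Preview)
Your proposal is correct and is exactly the paper's approach: the paper's proof of this proposition consists of the single sentence ``Applying Lemma~\ref{p:6} to quasi-isometries gives'', so your explicit identification of $\rho_1(t)=t/c_1-r_1$, $\rho_2(t)=ct+r$ and their inverses is precisely the unpacking of that line.
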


For a metric $d$ on a discrete space $X$ and $r>0$ we denote by $d+r$ the new metric $\bar d$ defined as $\bar
d(x,y)=d(x,y)+r$ provided $x\ne y$ and $\bar d(x,x)=0$. We call it {\em the metric $d$ shifted by a constant $r$}.

Lemma \ref{subgroup} immediately implies

\begin{prop}\label{shift}
For every $r,\lambda,D$ such that $0\le r< \lambda, r<D$, $$\di{(\lambda,D)}(X,d+r)=\di{(\lambda-r,D-r)}(X,d).$$
\end{prop}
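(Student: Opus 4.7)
The plan is essentially to unwind the definition of the shifted metric $\bar d=d+r$ and observe that $\lambda$-paths and cluster diameters in $\bar d$ correspond bijectively to $(\lambda-r)$-paths and cluster diameters in $d$. This will show that a coloring is a $(\lambda,D)$-coloring for $(X,\bar d)$ if and only if it is a $(\lambda-r,D-r)$-coloring for $(X,d)$, giving the stated equality of dimensions.

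The first step is the path step. For $x\ne y$, $\bar d(x,y)=d(x,y)+r$, and for $x=y$ both distances are $0$. Thus $\bar d(x,y)\le \lambda$ is equivalent to $d(x,y)\le \lambda-r$ (the hypothesis $r<\lambda$ keeps $\lambda-r>0$). Consequently a finite sequence $x=z_0,z_1,\dots,z_n=y$ is a $\lambda$-path in $(X,\bar d)$ if and only if it is a $(\lambda-r)$-path in $(X,d)$. Therefore the $\lambda$-clusters of any coloring of $(X,\bar d)$ coincide, as subsets of $X$, with its $(\lambda-r)$-clusters in $(X,d)$.

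The second step is the diameter step. For any subset $S\subseteq X$ with $|S|\ge 2$, we have $\diam_{\bar d}(S)=\diam_{d}(S)+r$; while for $|S|\le 1$ both diameters are $0$. Using $r<D$, the inequality $\diam_{\bar d}(S)\le D$ is therefore equivalent to $\diam_{d}(S)\le D-r$ (in the singleton case both sides of the equivalence hold trivially).

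Combining the two steps, a coloring $c:X\to\{1,\dots,n+1\}$ is a $(\lambda,D)$-coloring of $(X,\bar d)$ in $n+1$ colors exactly when every monochromatic $(\lambda-r)$-cluster in $(X,d)$ has $d$-diameter at most $D-r$, i.e.\ exactly when $c$ is a $(\lambda-r,D-r)$-coloring of $(X,d)$ in $n+1$ colors. Hence the two dimensions are equal, proving the proposition. There is no real obstacle here; alternatively one could try to deduce both inequalities from Lemma~\ref{subgroup} applied to the identity map $\id:(X,d)\to(X,\bar d)$, but the direct correspondence above is cleaner since the distortion is a pure shift rather than a genuine quasi-isometric rescaling.
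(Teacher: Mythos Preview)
Your proof is correct and is essentially the paper's approach made explicit: the paper simply says that Lemma~\ref{subgroup} (pulling back colorings through the identity map between $(X,d)$ and $(X,d+r)$) immediately gives the result, and you have unwound precisely that argument by identifying the $\lambda$-clusters and their diameters under the shift. Your closing remark already notes this connection, so there is nothing substantively different between the two.
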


\subsection{Dimension growth and expansion in graphs}

Let $G$ be a graph such that there exists a number $\varepsilon>0$ such that for every $r$ there exists a finite
subgraph $G_r\subseteq G$ with the following property
\medskip

$\left(P_r(\varepsilon)\right)$ For every subset $A$ of {vertices} of $G_r$ of diameter (in $G$) $\le r$, $|\partial_{G_r}(A)|\ge
\varepsilon |A|$ where {$\partial_{G_r}=\{v\in G_r\mid \dist(v,A)=1\}$} denotes the boundary in $G_r$.
\medskip

\begin{theorem} \label{th:exp} Suppose that {there exists $\varepsilon>0$ such that a graph} $G=(V_G,E_G)$ (where $V_G$ is the set of vertices and $E_G$ is the set of edges) satisfies $\left(P_r(\varepsilon)\right)$ for all $r$.  Then
the dimension growth of $G$ is exponential.
\end{theorem}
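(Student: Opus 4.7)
The plan is to use the local expansion $(P_r(\varepsilon))$ to prove that for every $D$,
$$\di{(\lambda,D)}(G)\;\ge\;(1+\varepsilon)^{\lambda/2}-1.$$
Since this bound is uniform in $D$, taking the infimum over $D$ will give $d_G(\lambda)\ge(1+\varepsilon)^{\lambda/2}-1$, i.e.\ exponential dimension growth. Given any $(\lambda,D)$-coloring of $G$ in $n+1$ colors, I would pick $r=D+\lambda$ (large enough so that the iteration below goes through), let $G_r$ be the finite subgraph furnished by the hypothesis, write $M=|V(G_r)|$, and by pigeonhole select a color $i$ whose class $W_i$ satisfies $|W_i\cap V(G_r)|\ge M/(n+1)$.

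The structural input is that two distinct maximal monochromatic $\lambda$-clusters $P_j,P_{j'}$ of color $i$ must satisfy $d_G(P_j,P_{j'})>\lambda$, since otherwise a pair of points at distance $\le\lambda$ from the two clusters would provide a same-color $\lambda$-step merging them into a larger monochromatic $\lambda$-cluster, contradicting maximality. Consequently the $G$-neighborhoods of radius $\lambda/2$ around the color-$i$ pieces are pairwise disjoint. Now iterate the expansion: set $A^{(j)}_k$ to be the $k$-neighborhood of $P_j\cap V(G_r)$ taken inside $G_r$. Then $A^{(j)}_k$ has $G$-diameter at most $D+2k$, which is $\le r$ for $k\le \lambda/2$, so $(P_r(\varepsilon))$ applies and yields $|A^{(j)}_{k+1}|\ge(1+\varepsilon)|A^{(j)}_k|$ throughout this range, giving
$$|A^{(j)}_{\lambda/2}|\;\ge\;(1+\varepsilon)^{\lambda/2}\,|P_j\cap V(G_r)|.$$
Since $d_{G_r}\ge d_G$, each $A^{(j)}_{\lambda/2}$ is contained in the $G$-ball of radius $\lambda/2$ about $P_j$, so the sets $A^{(j)}_{\lambda/2}$ are pairwise disjoint as $j$ ranges over the color-$i$ pieces. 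Summing over $j$,
$$(1+\varepsilon)^{\lambda/2}\,|W_i\cap V(G_r)|\;\le\;M,$$
and combined with the pigeonhole lower bound on $|W_i\cap V(G_r)|$ this forces $n+1\ge(1+\varepsilon)^{\lambda/2}$.

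The delicate point is the iteration: $(P_r(\varepsilon))$ only controls subsets of $G$-diameter $\le r$, so $r$ must be chosen large enough that $\lambda/2$ successive boundary-expansions preserve the diameter bound, which forces $r\ge D+\lambda$. Fortunately the hypothesis supplies $G_r$ for every $r$, so this choice is legitimate uniformly in $D$, which is precisely what makes the resulting exponential lower bound independent of the control $D$. A secondary but essential point is that $d_G(P_j,P_{j'})>\lambda$ is strict rather than merely $\ge\lambda$; this strictness is what guarantees disjointness of the half-radius neighborhoods and it falls out automatically from the maximality argument.
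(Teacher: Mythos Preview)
Your proof is correct and follows essentially the same approach as the paper's: choose $r>D+\lambda$, iterate the expansion property $(P_r(\varepsilon))$ for $\lambda/2$ steps to get $|N_{\lambda/2}(W_i^j)|\ge(1+\varepsilon)^{\lambda/2}|W_i^j|$, and use that the $\lambda/2$-neighborhoods of distinct same-color clusters are disjoint. The only difference is cosmetic: you apply pigeonhole to select a single densest color class, whereas the paper simply sums $\sum_{i,j}|N_{\lambda/2}(W_i^j)|\le(k+1)|V_r|$ over \emph{all} colors and clusters at once; both routes give the identical bound $k+1\ge(1+\varepsilon)^{\lambda/2}$, and the paper's version avoids the extra step.
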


\proof Consider any even number $\lambda>0$. Let $V_G=\cup_{i=1}^{k+1} U_i$ be a coloring of the vertices of $G$ in $k+1$ colors such
that all $\lambda$-clusters $U_i^j$ have diameters at most $d$. Take $r>d+\lambda$ and consider the graph $G_r=(V_r,E_r)$. Let
$W_i^j=U_i^j\cap G_r$. We have $\cup W_i^j$  equal to the set $V_r$ of all vertices of $G_r$. Note that by the choice of $r$ and by property
$\left(P_r(\varepsilon)\right)$, the $\lambda/2$-neighborhood $N_{\lambda/2}(W_i^j)$ of $W_i^j$ in $G_r$ has at least
$(1+\varepsilon)^{\lambda/2}$ elements. Since different $\lambda$-clusters of the same color are $\lambda$-disjoint, we
have that the sum of cardinalities $|N_{\lambda/2}(W_i^j)|$ is at most $|V_r|(k+1)$. On the other hand, that sum is at least
$(1+\varepsilon)^{\lambda/2}$ times the sum of cardinalities $|W_i^j|$, i.e. at least $|V_r|(1+\varepsilon)^{\lambda/2}$.
Hence $k+1\ge (1+\varepsilon)^{\lambda/2}$ which implies the statement of the theorem.
\endproof

The following two corollaries of Theorem \ref{th:exp} follow from Ozawa's result \cite{Ozawa} that metric spaces of
subexponential dimension growth satisfy property A, and the result of Willett \cite{Willett} that certain graphs do not
satisfy property A.  Theorem \ref{th:exp} gives direct proofs of these corollaries. See also the recent paper \cite{W5}
where it is proved that in the case of graphs of bounded degree the condition of Theorem \ref{th:exp} implies that $G$
does not satisfy property A.

\begin{cor}\label{cor:1} Let $G$ be a graph containing an expander. Then the dimension growth of $G$ is exponential.
\end{cor}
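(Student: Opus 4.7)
The plan is to extract from the expander the hypothesis of Theorem~\ref{th:exp} and then invoke that theorem directly. Recall that an expander is a sequence $\{H_n\}$ of finite graphs of bounded valency $d$ with $|V(H_n)|\to\infty$, satisfying a uniform Cheeger inequality: there is a constant $\varepsilon>0$ such that $|\partial_{H_n}A|\ge\varepsilon|A|$ for every $A\subseteq V(H_n)$ with $|A|\le|V(H_n)|/2$. The assumption that $G$ contains an expander means that the disjoint union $\sqcup H_n$ coarsely embeds into $G$; after identifying each $H_n$ with its image I may assume $H_n\subseteq G$. Because the $H_n$ have uniformly bounded valency and the embedding is coarse, there is a function $M:\R_+\to\R_+$, independent of $n$, such that any $G$-ball of radius $r$ meets $V(H_n)$ in at most $M(r)$ vertices.

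Next I would verify the hypothesis $(P_r(\varepsilon))$. Given $r>0$, I choose $n$ so large that $|V(H_n)|\ge 2M(r)$ and set $G_r:=H_n$. If $A\subseteq V(G_r)$ has $G$-diameter at most $r$, then $A$ lies in a single $G$-ball of radius $r$ about any of its points, so $|A|\le M(r)\le|V(H_n)|/2$. The expander inequality then gives $|\partial_{G_r}A|\ge\varepsilon|A|$, which is exactly the property $(P_r(\varepsilon))$ with the same $\varepsilon$ for every $r$. Theorem~\ref{th:exp} now applies and yields the exponential lower bound $d_G(\lambda)\ge(1+\varepsilon)^{\lambda/2}-1$, proving the corollary.

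The only real subtlety lies in producing the uniform bound $M(r)$: one must pin down what ``contains an expander'' actually means and verify both that the valencies of the $H_n$ are uniformly bounded and that the inclusion $\sqcup H_n\hookrightarrow G$ is a coarse embedding, so that bounded $G$-diameter within $H_n$ translates into bounded $H_n$-diameter and hence into a uniformly bounded count of vertices. Once $M(r)$ is in hand, the rest of the argument is mechanical: pick $n$ large enough to force every diameter-$r$ subset of $H_n$ into the small-half regime of the Cheeger inequality, then cite Theorem~\ref{th:exp}.
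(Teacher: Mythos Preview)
Your proposal is correct and follows the same route as the paper: verify that sufficiently large members of the expander family satisfy property $(P_r(\varepsilon))$ for every $r$, and then invoke Theorem~\ref{th:exp}. The paper's own proof is a single sentence to this effect, whereas you have spelled out the mechanism---the bounded-geometry/coarse-embedding bound $M(r)$ forcing diameter-$r$ subsets into the ``small half'' where the Cheeger inequality applies---and correctly flagged that this is the only point requiring care.
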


\proof Indeed, by definition, every sufficiently large (finite) graph in an expander sequence satisfies property
$\left(P_r(\varepsilon)\right)$ for some $\varepsilon$ (which is related to the spectral gap of the expander) and all
$r$. \endproof

\begin{cor}\label{cor:2} Let $G$ be a graph such that for every $r\ge 0$ there exists a subgraph $G_r$ of $G$ with
degree of each vertex at least $3$ and with girth $> r$. Then the dimension growth of $G$ is exponential. In
particular, the dimension growth of the Gromov ``random monster" \cite{Gr2} constructed using any sequence of finite
regular graphs with increasing girth (not necessarily an expander sequence of finite graphs) is exponential. 
\end{cor}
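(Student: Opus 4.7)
The plan is to verify the hypothesis of Theorem~\ref{th:exp}, i.e., to exhibit for every $r$ a finite subgraph of $G$ satisfying property $(P_r(\varepsilon))$ with a universal $\varepsilon>0$; the theorem then immediately yields exponential dimension growth of $G$.

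The main ingredient is an elementary edge count. Suppose $H$ is a finite graph with minimum degree $\ge 3$, maximum degree $\le\Delta$, and girth exceeding some $N$, and let $A\subseteq V(H)$ satisfy $|A|\le N$. Since $H$ contains no cycle of length $\le N$, the induced subgraph $H[A]$ is a forest, so the number of edges $e_H(A)$ inside $A$ is at most $|A|-1$. The minimum-degree hypothesis then gives
$$
e_H(A,V(H)\setminus A)=\sum_{v\in A}\deg_H(v)-2e_H(A)\ge 3|A|-2(|A|-1)=|A|+2,
$$
and therefore $|\partial_H A|\ge(|A|+2)/\Delta\ge|A|/\Delta$.

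To turn this into $(P_r(\varepsilon))$, given $r$ I would apply the corollary's hypothesis not to $r$ itself but to a much larger parameter $N(r)$, chosen so that $N(r)$ exceeds the cardinality of any $A\subseteq V(G_r)$ with $\diam_G(A)\le r$. The input needed here is a uniform bound on the volume of $G$-balls of radius $r$, which is automatic for bounded-geometry graphs such as Cayley graphs of finitely generated groups; picking $N(r)$ larger than this ball volume, the forest calculation above yields $(P_r(1/\Delta))$, where $\Delta$ is the maximum degree of the chosen subgraph. One also has to keep track of the iterated neighborhoods $N_{\lambda/2}(W_i^j)$ arising in the proof of Theorem~\ref{th:exp}: each still has $G$-diameter $\le r$, and the same volume bound keeps their cardinality below $N(r)$, so $(P_r(1/\Delta))$ applies throughout the iteration.

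For the Gromov ``random monster'' the plan is immediate: by construction $G$ contains an isometrically embedded sequence of finite $d$-regular graphs whose girth tends to infinity, so for every $r$ one takes a sufficiently late member of that sequence as $G_r$, and it automatically has min degree $3$ and girth larger than $N(r)$, with $\Delta=d$ bounded. The main obstacle is the generality of the statement: controlling $|A|$ from $\diam_G(A)\le r$ is precisely what forces an implicit bounded-geometry assumption on $G$, since without a uniform volume bound the forest argument can fail for sets that are $G$-close but $G_r$-spread-out.
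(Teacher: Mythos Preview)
Your argument is correct and close in spirit to the paper's, with one structural difference in how the forest property is obtained. The paper applies the hypothesis at the value $r$ itself and asserts directly that any $A\subseteq V(G_r)$ with $\diam_G(A)\le r$ induces a forest in a graph of girth $>r$; from there it counts that at least $|A|/2$ vertices have internal degree $\le 2$, hence at least $k-2$ neighbours outside $A$, and since each boundary vertex absorbs at most $k$ such edges one obtains $\varepsilon=(k-2)/(2k)$. You instead inflate to girth $>N(r)$ with $N(r)$ exceeding the volume of an $r$-ball in $G$, so that $|A|\le N(r)<\text{girth}(G_r)$ forces acyclicity by cardinality alone; this is logically cleaner and makes the bounded-degree requirement explicit --- a requirement the paper's final counting step also needs (the bound ``each boundary vertex is connected to at most $k$ vertices in $A$'' is really a max-degree bound, i.e.\ effectively regularity). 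One small redundancy in your write-up: once $(P_r(1/\Delta))$ is established for \emph{all} $A$ of $G$-diameter $\le r$, Theorem~\ref{th:exp} applies verbatim, since the iterates $N_{\lambda/2}(W_i^j)$ already have $G$-diameter $\le d+\lambda<r$ by the choice of $r$ there; you need not track them separately.
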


\proof Indeed, if a graph $G_r$ has girth $\ge r$ and minimal degree of a vertex $k\ge 3$, then every subgraph $A$ of
$G_r$ of diameter $\le r$ is a forest and there are at least $|A/2|$ vertices in $A$ that are connected to at least
$(k-2)$ vertices in $G_r$ outside $A$, and each of these vertices is connected to at most $k$ vertices in $A$. Thus one
can take $\varepsilon=\frac{k-2}{2k}$.\endproof

\begin{remark} By \cite{Sapirasph} there exists a 5-dimensional aspherical Riemannian manifold whose fundamental group contains expander and hence has exponential dimension growth. That answers a question from \cite[Page 158]{Gr56}.
\end{remark}

In view of Theorem \ref{th:exp} and \cite{W5}, the following question becomes very interesting.

\begin{question}\label{q:4} Is it true that a finitely generated group (or a graph with bounded degrees of vertices)
satisfies property A if and only if it has subexponential dimension growth?
\end{question}

\section{Dimension growth of direct sums of $\Z$}

\subsection{Coloring unit cubes}
The following two examples are well known.

\begin{ex}\label{ex:Z} For every $\lambda\ge 1$, color a number $z\in\Z$ in black if $\lfloor
\frac{z}{2\lambda}\rfloor$ is even, and in white otherwise. This is a $(\lambda,2\lambda)$-coloring of $\Z$. Hence
$$\di{(\lambda,2\lambda)}(\Z)\le 1.$$
\end{ex}

\begin{ex}
Using the checker coloring of vertices of $\Z^n$ (the color of the point $(x_1,...,x_n)$ of $\Z^n$ is the sum $\sum
x_i$ modulo $2$) one gets $$\di{(1,0)} (\Z^n,\ell_1)=1.$$
\end{ex}



It will be clear later that expansion of ``small" subsets in unit cubes plays important role in computing dimension growth of groups. See also Gromov's papers \cite{Gr7} and \cite{Gr8} where a connection between this question and Shannon inequalities is considered. 

\begin{prop}\label{p:56} Let $G$ be the binary cube $\{0,1\}^n$ with the $\ell_1$-metric. Then for every $r> 0$, such
that $\varepsilon=\frac{n}{r+1}-2>0$, $G$ satisfies property $\left(P_r(\epsilon)\right)$.
\end{prop}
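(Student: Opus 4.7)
The plan is to establish the isoperimetric bound $|\partial_G(A)|\ge\varepsilon|A|$ by a two-way count of ``up-edges''. Since the Hamming metric on $G=\{0,1\}^n$ is preserved by the translation action of the group $\{0,1\}^n$ on itself by bitwise XOR, after translating I may assume $0\in A$; the diameter hypothesis then forces every $v\in A$ to have Hamming weight $|v|\le r$ (here $|v|$ denotes the number of coordinates of $v$ equal to $1$). Consequently, the closed neighborhood $N(A):=A\cup\partial_G(A)$ is contained in the ball of weight $\le r+1$ about $0$, since each $w\in\partial_G(A)$ lies at distance $1$ from some vertex of $A$.

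The main step is to count pairs $(v,i)\in A\times\{1,\dots,n\}$ with $v_i=0$ in two ways. On one hand, each $v\in A$ contributes $n-|v|\ge n-r$ such pairs, yielding a lower bound of $(n-r)|A|$ on the total. On the other hand, each pair maps to the ``flip-up'' image $w:=v\oplus e_i\in N(A)$, and the preimages of a fixed $w$ are parametrized by $\{i:w_i=1,\ w\oplus e_i\in A\}$, a set of size at most $|w|\le r+1$. This yields the upper bound $(r+1)|N(A)|$. Combining the two, $(n-r)|A|\le(r+1)|N(A)|$, and hence
$$|\partial_G(A)|=|N(A)|-|A|\ge\frac{n-2r-1}{r+1}|A|>\left(\frac{n}{r+1}-2\right)|A|=\varepsilon|A|,$$
as required.

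The delicate point is the choice of double count. A direct bound on the number of $A$-neighbors of a single $w\in\partial_G(A)$ (that is, on the ``in-degree'' of $w$ from $A$) is useless: this quantity can be as large as $n$, for example when $A=\{e_1,\dots,e_n\}$ and $w=0$. Counting via the flip-up operation instead gives a uniform multiplicity bound of $r+1$, because every element of $N(A)$ has weight at most $r+1$; this is precisely the way in which the diameter hypothesis enters the argument.
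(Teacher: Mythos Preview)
Your proof is correct. It is also genuinely different from the paper's argument, and in fact cleaner.

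The paper argues by induction on $d=\diam(A)$: assuming $\vec 0\in A$, it lets $F=A\cap S(\vec 0,d)$ be the outermost sphere of $A$, applies the inductive hypothesis to $A\setminus F$, and then bounds $|B|$, where $B=\partial A\cap S(\vec 0,d+1)$, by observing that each $x\in F$ contributes $n-d$ ``up-flips'' landing in $B$, while each $y\in B$ is hit at most $d+1$ times. This gives $|B|\ge\frac{(n-d)|F|}{d+1}$, and combining with the inductive bound on $|\partial(A\setminus F)|$ yields the result.

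Your proof bypasses the induction entirely: you run the same ``up-flip'' double count, but over all of $A$ at once rather than layer by layer. The key observation you make --- that the multiplicity of the map $(v,i)\mapsto v\oplus e_i$ is bounded by the weight of the image, hence by $r+1$ on $N(A)$ --- is exactly the mechanism the paper uses locally on the outer shell, but applied globally it removes the need for induction. The paper's layered approach does track the actual diameter $d\le r$ a bit more visibly (giving $\frac{n}{d+1}-2$ before relaxing to $\frac{n}{r+1}-2$), but your argument yields the same refinement simply by replacing $r$ with $d$ throughout, so nothing is lost. Your route is shorter and makes the role of the diameter hypothesis more transparent.
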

\begin{proof} Let $A\subseteq G$.
Induction on $d=\diam(A)$.

If $d=0$, then $A$ is a point and the inequality $|\partial A|=n>\frac{n}{r+1}-2=\left(\frac{n}{D+1}-2\right)|A|$ is
true.

Assume that $\left(P_r(\epsilon)\right)$ holds for $d-1$ and let $\diam(A)=d\le r$. We may assume that $\vec 0\in A$
where $\vec 0=(0,0,...,0)$. Let $F=A\cap S(\vec 0,d)$, where $S(x,t)$ denotes the sphere in $G=\{0,1\}^n$ of radius $t$
centered in $x$. Since the diameter of $A$ is $d$, we can assume that $F$ is not empty. By the induction assumption, $$
|\partial(A\setminus F)|>\varepsilon|A\setminus F|.$$ Let $B=\partial A\cap S(\vec 0,d+1)$. 
since the diameter of Since $\partial(A\setminus F)\subset\partial A\cup F$, we obtain $\partial(A\setminus F)\cup
B\subset\partial A\cup F$. Note that $B\cap\partial(A\setminus F)=\emptyset$. Also $\partial A\cap F=\emptyset$. Thus,
the above unions are disjoint $$\partial(A\setminus F)\sqcup B\subset\partial A\sqcup F$$ where $\sqcup$ denotes
disjoint union. Hence $$|\partial(A\setminus F)|+|B|\le|\partial A|+ |F|.$$

Note that every point $y\in B$ is obtained from some $x\in F$ by replacing one 0 coordinate by 1. Since $\|x\|=d$,
there are exactly $n-d$ coordinates of $x$ that are equal to 0. Thus each $x\in F$ corresponds to $n-d$ points in $B$.
Each $y\in G$ can be obtained from a point from $F$ at most in $d+1$ ways. Thus, $$|G|\ge\frac{(n-d)|F|}{d+1}.$$
Therefore $$ |\partial A|\ge|\partial(A\setminus F)|+|B|-|F|\ge\left(\frac{n}{r+1}-2\right)|A\setminus F|+
\frac{(n-d)|F|}{d+1}-|F|=$$
$$
\left(\frac{n}{r+1}-2\right)|A\setminus F|+\left(\frac{n}{d+1}-\frac{d}{d+1}-1\right)|F|\ge
\left(\frac{n}{r+1}-2\right)|A\setminus F|+\left(\frac{n}{d+1}-2\right)|F|$$
since $d\le r$, $$
\ge\left(\frac{n}{r+1}-2\right)|A\setminus
F|+\left(\frac{n}{r+1}-2\right)|F|=\left(\frac{n}{r+1}-2\right)|A|=\varepsilon |A|.
$$
\end{proof}
For every subset $A$ of vertices of a graph $X$ and every $r\ge 1$ we use the notation $$
\partial_rA=\{x\in X\mid 0 < d(x,A)\le r\}$$
for the $r$-boundary of a set $A$. Then $\partial A=\partial_1A$.
\begin{cor}\label{2}
For any subset $A\subset\{0,1\}^n$ of diameter $\le r\le n/4$ and $n>16$ $$ |\partial_2 A|>\frac{n^2}{4(r+2)^2}|A|.$$
\end{cor}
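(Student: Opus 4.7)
The plan is to apply Proposition \ref{p:56} twice: once to $A$ itself, and once to the enlargement $A' := A \cup \partial A$, and then combine the two boundary estimates. Under the hypothesis $r \le n/4$ and $n > 16$, both quantities $\varepsilon_1 := \frac{n}{r+1} - 2$ and $\varepsilon_2 := \frac{n}{r+3} - 2$ are positive, so Proposition \ref{p:56} is applicable with the two diameter bounds $r$ and $r+2$.

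First, since $\diam(A) \le r$, we get $|\partial A| \ge \varepsilon_1 |A|$. Since any vertex in $\partial A$ is at distance $1$ from $A$, the set $A'$ has diameter at most $r+2$, so Proposition \ref{p:56} applied to $A'$ yields $|\partial A'| \ge \varepsilon_2 |A'| \ge \varepsilon_2 (1+\varepsilon_1)|A|$. The geometric observation I need is that $\partial A$ and $\partial A'$ are disjoint subsets of $\partial_2 A$: $\partial A \subset \partial_2 A$ trivially, while any $v \in \partial A'$ is outside $A'$ and has a neighbor in $A'$ that cannot lie in $A$ (else $v \in \partial A \subset A'$), so the neighbor lies in $\partial A$ and $d(v,A) = 2$; disjointness from $\partial A$ is immediate from $\partial A \subset A'$. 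Hence
$$|\partial_2 A| \ge |\partial A| + |\partial A'| \ge \bigl[(1+\varepsilon_1)(1+\varepsilon_2) - 1\bigr]|A|.$$

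It remains to check the arithmetic bound $(1+\varepsilon_1)(1+\varepsilon_2) - 1 > \frac{n^2}{4(r+2)^2}$. A direct expansion gives
$$(1+\varepsilon_1)(1+\varepsilon_2) - 1 = \frac{(n-r-1)(n-r-3)-(r+1)(r+3)}{(r+1)(r+3)} = \frac{n(n-2r-4)}{(r+1)(r+3)},$$
and since $(r+1)(r+3) = (r+2)^2 - 1 < (r+2)^2$, this exceeds $\frac{n(n-2r-4)}{(r+2)^2}$. So it suffices to verify $4n(n-2r-4) > n^2$, i.e. $3n > 8r + 16$, and under $r \le n/4$ this reduces to $n > 16$, exactly the given hypothesis.

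The main obstacle is really just bookkeeping: one has to notice that the two boundary contributions add (via the disjointness argument above) rather than merely comparing $|\partial A|$ and $|\partial_2 A|$ directly, since only the product $(1+\varepsilon_1)(1+\varepsilon_2)$ is large enough to produce the quadratic factor $n^2/(r+2)^2$ in the target inequality — a single application of Proposition \ref{p:56} would only produce a linear-in-$n/r$ factor.
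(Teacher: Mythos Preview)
Your proof is correct and follows essentially the same approach as the paper: decompose $\partial_2 A$ as the disjoint union of $\partial A$ and $\partial(A\cup\partial A)$, apply Proposition~\ref{p:56} to each piece, and combine. Your arithmetic is organized a bit differently (via the closed-form identity $(1+\varepsilon_1)(1+\varepsilon_2)-1=\tfrac{n(n-2r-4)}{(r+1)(r+3)}$ rather than the paper's chain of inequalities using $\tfrac{n}{r+2}\ge\tfrac{8}{3}$), and you are in fact slightly more careful in taking $\varepsilon_2=\tfrac{n}{r+3}-2$, which is what Proposition~\ref{p:56} actually yields for a set of diameter $\le r+2$.
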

\begin{proof}
Note that $\partial_2A=\partial A\sqcup\partial\left(A\cup\partial A\right)$. Therefore, $$ |\partial_2A|=|\partial
A|+|\partial\left(A\cup\partial A\right)|\ge \left(\frac{n}{r+1}-2\right)|A|+
\left(\frac{n}{r+2}-2\right)(|A|+|\partial A|)$$
$$
\ge \left(\left(\frac{n}{r+2}-2\right)\left(\frac{n}{r+1}-1\right)+\left(\frac{n}{r+2}-2\right)\right)|A|\ge $$
$$\left(\left(\frac{n}{r+2}-2\right)\left(\frac{n}{r+2}-1\right)+\left(\frac{n}{r+2}-2\right)\right)|A|
=\left(\frac{n}{r+2}-2\right)\frac{n}{r+2}|A|\ge \frac{n^2}{4\left(r+2\right)^2}|A|.$$ Here we used the inequality
$\frac{n}{\left(r+2\right)}\ge \frac{8}{3}$ for $r< \frac n4$ and $n\ge 16$.
\end{proof}
We recall that a $\lambda$-cluster, $\lambda\in\N$, of $A\subset \{0,1\}^n$ is a maximal subset $C\subset A$ such
that for every pair of points $x,x'\in C$ there is a sequence $x_i\in C$ with $x_0=x$, $x_k=x'$, and
$d(x_i,x_{i+1})\le\lambda$. Thus, every two $\lambda$-clusters are {at} distance $\ge\lambda+1$. The proof of the
following lemma is similar to the proof of Theorem \ref{th:exp}.

\begin{lemma}\label{lm:cube}
The binary $n$-cube $\{0,1\}^n$, $n>64$, cannot be colored by $n$ colors such that each 4-cluster of every color has
diameter less than $\le\sqrt n/4$.
\end{lemma}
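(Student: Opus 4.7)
The plan is to derive a contradiction by combining Corollary \ref{2} with the disjointness of closed $2$-neighborhoods of monochromatic $4$-clusters, and then counting.

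Suppose for contradiction that there exists a coloring $c:\{0,1\}^n\to\{1,\dots,n\}$ in which every monochromatic $4$-cluster $C$ has diameter at most $r:=\sqrt{n}/4$. Since $n>64$, we have $r>2$, so $(r+2)^2<(2r)^2=n/4$; and since $r\le n/4$ and $n>16$, Corollary~\ref{2} applies to each such $C$ and gives
$$
|\partial_2 C|>\frac{n^2}{4(r+2)^2}|C|>n\,|C|.
$$

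Next I would use that two distinct monochromatic $4$-clusters of the same color are at mutual distance $>4$: any common vertex of their closed $2$-neighborhoods would witness a pair of points at distance $\le 4$ belonging to distinct $4$-clusters of the same color, contradicting maximality. Hence, writing $C_i^1,C_i^2,\dots$ for the $4$-clusters of color $i$, the sets $N_2(C_i^j)=C_i^j\sqcup\partial_2C_i^j$ are pairwise disjoint, and
$$
2^n\ge\sum_j|N_2(C_i^j)|=\sum_j\bigl(|C_i^j|+|\partial_2 C_i^j|\bigr)>(n+1)\sum_j|C_i^j|,
$$
so $|c^{-1}(i)|<2^n/(n+1)$ for every color $i$.

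Summing over the $n$ colors yields
$$
2^n=\sum_{i=1}^n|c^{-1}(i)|<\frac{n\cdot 2^n}{n+1}<2^n,
$$
a contradiction. The substantive work is already contained in the expansion estimate of Corollary~\ref{2}; the only things to verify carefully are the two numerical inequalities $\sqrt{n}/4\le n/4$ and $(r+2)^2<n/4$, both of which use only $n>64$. I do not foresee any real obstacle beyond those checks.
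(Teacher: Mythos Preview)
Your proof is correct and follows essentially the same approach as the paper: both apply Corollary~\ref{2} to the monochromatic $4$-clusters, use that distinct $4$-clusters of the same color are at distance $\ge 5$ so their $2$-boundaries (or $2$-neighborhoods) are disjoint, and finish with a counting argument. The only cosmetic difference is that the paper bounds $\sum_i|\partial_2 A_i|\le n\cdot 2^n$ and extracts the contradiction $r+2>\sqrt{n}/2$, whereas you turn the same estimate into $|c^{-1}(i)|<2^n/(n+1)$ and sum over colors; the underlying computation is the same.
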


\begin{proof}
Assume that there is  such a coloring: $$\{0,1\}^n=A_1\cup\dots\cup A_n$$ with $A_i=\cup_j A^j_i$, $\diam(A^j_i)\le
r<\sqrt n/4$, each $A_i^j$ is a $4$-cluster of color $i$, and $d(A_i^j,A_i^k)\ge 5$ for all $i$, $j$, and $k\ne j$.
By Corollary~\ref{2}, $$ |\partial_2
A^j_i|>\frac{n^2}{4(r+2)^2}|A^j_i|$$ for all $i$ and $j$ where $r<\sqrt n/4$. Since $d(A_i^j,A_i^k)\ge 5$, we have
$\partial_2A_i=\sqcup_j\partial_2 A^j_i$ (disjoint union). Therefore, $$ |\partial_2 A_i|>\frac{n^2}{4(r+2)^2}|A_i|.$$
Since $\sum_{i=1}^n|A_i|=2^n$ (the number of vertices in the binary cube) we obtain, $$\sum_{i=1}^n|\partial_2
A_i|>\frac{n^2}{4(r+2)^2}2^n.$$ Since $2^n>|A_i|$, we have $n2^n>\frac{n^2}{4(r+2)^2}2^n$ or equivalently, $r+2>\sqrt
n/2$. Then $\sqrt n/4>\sqrt n/2-2$ which is equivalent to $8>\sqrt n$. This contradicts the assumption that $n>64$.
\end{proof}

\begin{cor}\label{mainL}
$\di{(4,\sqrt n/4)}(\{0,1\}^n,\ell_1)\ge n$ for  $n>64$.
\end{cor}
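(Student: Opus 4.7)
The plan is to deduce Corollary~\ref{mainL} essentially as a reformulation of Lemma~\ref{lm:cube}, by unwinding the definition of $\di{(\lambda,D)}$ and checking that the strict-vs-nonstrict inequality on the diameter bound causes no trouble.

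I would argue by contradiction. Assume that $\di{(4,\sqrt n/4)}(\{0,1\}^n,\ell_1)\le n-1$. By the definition of the function $\di{(\lambda,D)}$, this means there is a coloring of the vertex set of $\{0,1\}^n$ in at most $n$ colors, say $\{0,1\}^n=A_1\cup\dots\cup A_n$, such that each maximal monochromatic $4$-cluster $A_i^j$ satisfies $\diam(A_i^j)\le\sqrt n/4$. By definition of $4$-cluster, distinct $A_i^j$ and $A_i^k$ (same color, different cluster) are at distance $\ge 5$, so this coloring is exactly of the type considered in the proof of Lemma~\ref{lm:cube}.

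Next I would address the boundary case. The $\ell_1$-metric on $\{0,1\}^n$ takes only integer values, so $\diam(A_i^j)$ is a nonnegative integer. For $n>64$ one has $\sqrt n/4>2$, and in particular $r:=\lfloor\sqrt n/4\rfloor$ satisfies $r\le\sqrt n/4<\sqrt n/4+1$; moreover $r<\sqrt n/4$ unless $\sqrt n/4$ is itself an integer. In the generic case where $\sqrt n/4$ is not an integer, every $A_i^j$ has $\diam(A_i^j)\le r<\sqrt n/4$, so Lemma~\ref{lm:cube} applied with this $r$ yields a contradiction. In the exceptional case where $\sqrt n/4$ is an integer (so $n$ is a perfect square divisible by $16$), one can instead apply Lemma~\ref{lm:cube} to the slightly larger ambient cube $\{0,1\}^{n+1}$ or adjust the bound by noting that the proof of Lemma~\ref{lm:cube} in fact tolerates $r+2\le\sqrt n/2$, i.e.\ $r\le\sqrt n/2-2$, which is strictly larger than $\sqrt n/4$ when $n>64$; so the hypothesis of Lemma~\ref{lm:cube} is satisfied for $r=\sqrt n/4$ as well.

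Either way, the assumed coloring contradicts Lemma~\ref{lm:cube}, so $\di{(4,\sqrt n/4)}(\{0,1\}^n,\ell_1)\ge n$. The only real point requiring care is the off-by-one in the definition of $\di{(\lambda,D)}(X)$ (value $\le m$ corresponds to $m+1$ colors) and the strict-vs-nonstrict comparison between $r$ and $\sqrt n/4$; there is no fresh combinatorial content beyond Lemma~\ref{lm:cube} itself.
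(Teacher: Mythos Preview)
Your proof is correct and matches the paper's approach: the corollary is stated there without proof as an immediate consequence of Lemma~\ref{lm:cube}. Your care with the strict-versus-nonstrict diameter bound is more than the paper itself supplies; the second argument you give (that the contradiction $r+2>\sqrt n/2$ in the proof of Lemma~\ref{lm:cube} already rules out $r=\sqrt n/4$ when $n>64$) is the clean way to handle the boundary case, and the detour through $\{0,1\}^{n+1}$ is unnecessary and not obviously well-defined.
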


\begin{cor} \label{cor:cubes} If a metric space $X$ contains isometric copies of binary cubes $\{0,1\}^n$ for all $n$,
then
$\di{4}(X)=\infty$. In particular, $\di{4}(\bigoplus^{\infty}\Z)=\infty$.
\end{cor}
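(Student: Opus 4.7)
The plan is to show $\di{4}(X) = \infty$ by establishing that $\di{(4, D)}(X) = \infty$ for every $D \in \R_+$; this suffices since by definition $d_X(\lambda) = \inf_D \di{(\lambda, D)}(X)$. The argument rests on two elementary monotonicity properties of $\di{(\lambda, D)}$. First, the quantity is non-increasing in $D$: any $(4, D')$-coloring is automatically a $(4, D)$-coloring whenever $D' \le D$, since the diameter constraint only loosens. Second, it is monotone under isometric inclusions: if $Y \subset X$ is an isometric subspace, then $\di{(4, D)}(Y) \le \di{(4, D)}(X)$, because the restriction of a valid $(4, D)$-coloring of $X$ to $Y$ remains valid -- each monochromatic $4$-cluster in $Y$ is contained in a monochromatic $4$-cluster of $X$, so its diameter stays bounded by $D$. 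This is also the special case $\rho_1 = \rho_2 = \id$ of Lemma~\ref{p:6}.

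Combining these two observations with Corollary~\ref{mainL}, given any $D$ I would pick $n > 64$ with $\sqrt{n}/4 > D$ and a fixed isometric copy of $\{0,1\}^n$ inside $X$, obtaining
\[
\di{(4, D)}(X) \ge \di{(4, D)}(\{0,1\}^n, \ell_1) \ge \di{(4, \sqrt{n}/4)}(\{0,1\}^n, \ell_1) \ge n.
\]
Since $n$ can be made arbitrarily large, $\di{(4, D)}(X) = \infty$, and as $D$ was arbitrary, $\di{4}(X) = \infty$.

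For the specific statement about $\bigoplus^{\infty} \Z$ equipped with the word metric associated to the standard generators (which coincides with the $\ell_1$-metric on the direct sum), the subgroup $\Z^n$ embeds isometrically for every $n$, and in particular the binary cube $\{0,1\}^n \subset \Z^n$ is an isometric subspace, so the general statement applies. I do not foresee any real obstacle: the corollary is a formal consequence of Corollary~\ref{mainL} together with the two monotonicity observations, and the only thing worth checking carefully is that restriction to an isometric subspace preserves the cluster-diameter bound, which it does because any $4$-path in $Y$ is a $4$-path in $X$.
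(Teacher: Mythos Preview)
Your proof is correct and follows essentially the same approach as the paper's: both arguments combine Corollary~\ref{mainL} with monotonicity of $\di{(4,D)}$ in $D$ and under isometric inclusion to force $\di{(4,D)}(X)$ to exceed any bound. The only cosmetic difference is that the paper phrases it as a contradiction (assume $\di{4}(X)=k$, pick $l$ large enough to violate it), whereas you argue directly that $\di{(4,D)}(X)=\infty$ for every $D$; the content is identical, and your explicit spelling-out of the two monotonicity facts and the embedding $\{0,1\}^n\subset\Z^n\subset\bigoplus^\infty\Z$ is if anything cleaner.
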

\begin{proof}
Assume that $\di{4}(X)=k<\infty$. This means that for some $r$, $\di{(4,r)}(X)=k$. By Corollary \ref{mainL},
$\di{(4,r)}(\{0,1\}^l)=l$ for all $l\ge\max\{4r^2,64\}$, a contradiction.
\end{proof}
\subsection{{Dimension growth and the Ramsey theory}}
Answering our question Dmitri Panov and Justin Moore ~\cite{Pa} gave two proofs that
$\di{2}(\bigoplus^{\infty}\Z)=\infty$. Here we include a proof by Justin Moore. It shows a connection between dimension
growth and the Ramsey theory. The proof can be easily adapted to any metric space that contains isometric copies of
arbitrary large binary cubes (as in Corollary \ref{cor:cubes}).

\begin{thm}\label{sum of Z} $\di{2}(\bigoplus^{\infty}\Z)=\infty$.
\end{thm}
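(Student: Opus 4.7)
The plan is to argue by contradiction using the classical Ramsey theorem for $k$-uniform hypergraphs. Suppose $\di{2}(\bigoplus^{\infty}\Z)\le n<\infty$. Then there is some constant $D$ and an $(n+1)$-coloring $\chi$ of $\bigoplus^{\infty}\Z$ such that every monochromatic $2$-cluster has diameter at most $D$. I will produce a monochromatic $2$-cluster of diameter greater than $D$, contradicting this.

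For each finite subset $A\subset\N$ write $e_A=\sum_{i\in A}e_i$, where $\{e_i\}$ is the standard generating set of $\bigoplus^{\infty}\Z$; then in the word metric $d(e_A,e_B)=|A\triangle B|$. Fix an integer $k$ with $2k>D$ and consider the induced $(n+1)$-coloring of the $k$-element subsets of $\{1,\dots,N\}$ given by $A\mapsto\chi(e_A)$. By the finite Ramsey theorem for $k$-uniform hypergraphs, once $N$ is large enough (depending only on $n$ and $k$) one can find a set $M\subset\{1,\dots,N\}$ with $|M|=2k$ and a single color $c$ such that $\chi(e_A)=c$ for every $k$-element $A\subset M$.

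Since $|M|=2k$, choose two disjoint $k$-element subsets $A,B\subset M$. Starting from $A_0=A$, replace at each step one element of $A_{i-1}\cap A$ by a previously unused element of $B$, producing a sequence of $k$-subsets $A_0,A_1,\dots,A_k=B$ of $M$ with $|A_{i-1}\triangle A_i|=2$. The points $e_{A_0},\dots,e_{A_k}$ are all colored $c$ by the Ramsey conclusion, and consecutive ones are at distance exactly $2$, so $\{e_{A_0},\dots,e_{A_k}\}$ is a monochromatic $2$-cluster. Its diameter is at least $d(e_{A_0},e_{A_k})=|A\triangle B|=2k>D$, contradicting the assumed $(2,D)$-coloring.

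The main obstacle, and the reason Ramsey's theorem is genuinely needed rather than a naive pigeonhole along a single chain $v_j=e_1+\cdots+e_j$, is that a monochromatic $2$-cluster requires a $2$-path that stays inside the color class; the Ramsey step is precisely what supplies enough color-$c$ intermediate vectors to carry out the single-swap interpolation without ever leaving that color. The same argument applies verbatim to any metric space containing isometric copies of $\{0,1\}^n$ for arbitrarily large $n$, in the spirit of Corollary~\ref{cor:cubes}.
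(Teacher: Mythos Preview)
Your proof is correct and is essentially the same argument as the paper's: both use Ramsey's theorem for $k$-subsets to find a $2k$-element set $M$ all of whose $k$-subsets receive the same color, and then walk from one $k$-subset to a disjoint one by single-element swaps to produce a monochromatic $2$-cluster of diameter $2k$. The only cosmetic differences are that you invoke the finite Ramsey theorem explicitly and spell out the swap sequence in slightly more detail.
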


\proof Every finite subset $M$ of $\N$ corresponds to a vector $v(M)$ from $\Z^\infty$ with coordinates $0$, $1$ in the
natural way ($v(M)$ is the indicator function of $M$). Choose any $k\ge 1$. Let $P_k(\N)$ denote the set of all
$k$-element subsets of $\N$. Every finite coloring of $\Z^\infty$ induces a finite coloring of $P_k(\N)$. By the
classic result of Ramsey \cite{GRS} there exists a subset $M\subseteq \N$ of size $2k$ such that all $k$-element
subsets of $M$ have the same color.
Therefore we can find subsets $T_1,T_2,\ldots, T_k$ of size $k$ from $M$ such that the symmetric distance between $T_i$
and $T_{i+1}$ is 2, $i=1,\ldots,k-1$, and $T_1, T_k$ are disjoint. Then the vectors $v(T_1),\ldots, v(T_k)$ belong to
the same $2$-cluster of the coloring and the diameter of that cluster is $\ge 2k$. Thus for every  coloring of
$\Z^\infty$ in finite number of colors and every $k$ there exists a $2$-cluster of diameter $\ge k$, hence $2$-clusters
must have arbitrary large diameters.  This immediately implies the statement of the theorem. \endproof

\subsection{Dimension growth and the game of Hex}

The following questions seem to be interesting and non-trivial (see Remark \ref{r:56} below).

\begin{question}\label{i} For every $k\ge 1$ let $f(k)=\di{2}(\Z^k)$. What is the rate of growth of $f$? Is this
function bounded? Is $f(k)=k$  for every $k\ge 1$? Is $f(k)\ge k^\alpha$ for some $\alpha>0$?
\end{question}

Note that this question is similar in spirit to the famous game of Hex \cite{Gale}. Recall that the $n$-dimensional Hex
board  of size $k$ consists of all vertices $z=(x_1,\ldots,x_n)\in\Z^n$ such that $1\le z_i\le k$, $i=1,\ldots,n$ that
is all vertices of an $n$-dimensional cube $I_k^n$ of size $k$. A pair of vertices $(z_1,\ldots, z_n),
(z_i',\ldots,z_n')$ is called {\em adjacent} if $\max_i(|z_i-z_i'|, 1\le i\le n)=1$ and all differences $z_i-z_i'$ are
of the same sign.  For every $i=1,\ldots,n$ let $H_i^-=\{(z_1,\ldots,z_n)\mid z_i=1\}$, $H_i^+=\{(z_1,\ldots,z_n)\mid
z_i=k\}$. The following theorem can be found, for example, in \cite{Gale}.

\begin{thm}\label{hex} For every $k,n$ and every coloring of $I_k^n$ in $n$ colors there exists a monochromatic path of
color $i$ (for some $i=1,\ldots,n$) connecting $H_i^-$ and $H_i^+$.
\end{thm}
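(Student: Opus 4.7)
The plan is to deduce the statement from the Brouwer fixed-point theorem in dimension $n$, following the classical reduction of Gale. Assume for contradiction that a coloring $c : I_k^n \to \{1,\ldots,n\}$ admits no monochromatic Hex-path of color $i$ from $H_i^-$ to $H_i^+$ for any $i$. For each $i$, let $A_i$ be the set of vertices of color $i$ that are Hex-connected to $H_i^-$ through vertices of color $i$. By hypothesis $A_i \cap H_i^+ = \emptyset$, and every vertex of color $i$ lying on $H_i^-$ trivially belongs to $A_i$.

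Define a vertex map $f : I_k^n \to I_k^n$ by $f(v) = v + e_i$ when $c(v)=i$ and $v \in A_i$, and $f(v) = v - e_i$ when $c(v)=i$ and $v \notin A_i$. The two boundary conditions on $A_i$ ensure that $f(v)$ never exits the cube. Extend $f$ piecewise-linearly to $F : [1,k]^n \to [1,k]^n$ using, on each unit sub-cube, the standard simplicial subdivision into $n!$ simplices $\Delta_\sigma$ indexed by permutations $\sigma \in S_n$, whose vertices are the partial sums $v_0,\, v_0 + e_{\sigma(1)},\, v_0 + e_{\sigma(1)} + e_{\sigma(2)},\, \ldots$. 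Any two vertices of $\Delta_\sigma$ differ by a $\{0,1\}$-vector and are therefore Hex-adjacent in the sense of the theorem.

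I will contradict Brouwer by showing $F$ has no fixed point. On a simplex with vertices $u_0,\ldots,u_n$ and barycentric coordinates $\alpha_0,\ldots,\alpha_n$,
\[
F(x) - x \;=\; \sum_{j=0}^n \alpha_j\bigl(f(u_j) - u_j\bigr),
\]
and each $f(u_j) - u_j$ is some $\pm e_i$. Within one simplex no two of these vectors can be opposite along the same axis: if $f(u) - u = +e_i$ and $f(w) - w = -e_i$, then $c(u)=c(w)=i$ with $u \in A_i$ and $w \notin A_i$, but $u$ and $w$ are Hex-adjacent and of the same color, hence in the same color-$i$ Hex-component, forcing them to share $A_i$-membership. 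Therefore the shift vectors in each simplex lie in a set $\{\epsilon_{i_1}e_{i_1},\ldots,\epsilon_{i_m}e_{i_m}\}$ supported on distinct axes with fixed signs, so any convex combination of them is nonzero; hence $F(x) \ne x$ throughout $[1,k]^n$, contradicting Brouwer.

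The main obstacle in this approach is choosing a triangulation of the cube compatible with the Hex adjacency relation: the standard Kuhn subdivision described above has exactly this property, and once it is in hand, both the invariance of $I_k^n$ under $f$ and the "no opposite shifts within a common simplex" principle reduce routinely to the definition of $A_i$ and to the hypothesis that no monochromatic connecting path exists, after which the Brouwer contradiction is immediate.
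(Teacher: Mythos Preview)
The paper does not give its own proof of this theorem; it simply cites Gale \cite{Gale} and remarks afterward that Gale shows the Hex theorem is equivalent to the Brouwer fixed-point theorem. Your argument is precisely this Gale reduction: assuming no monochromatic crossing, you build a continuous self-map of the cube without fixed points by shifting each vertex by $\pm e_{c(v)}$ according to whether it lies in the color-$c(v)$ component of $H_{c(v)}^-$, and extend over the Kuhn triangulation, whose simplices have pairwise Hex-adjacent vertices. The key observation that two vertices of the same simplex and the same color must share $A_i$-membership (hence cannot produce opposite shifts $+e_i$ and $-e_i$) is exactly what makes $F(x)-x$ a convex combination of vectors supported on distinct axes with consistent signs, so never zero. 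The proof is correct and matches the approach the paper attributes to the reference.
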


In order to answer Question \ref{i} one needs to consider the following modified game Hex$_1$ with the same board but
calling two vertices adjacent if the $l_1$-distance between them is 1 (in the standard Hex game the distance is
$l_\infty$). It is easy to see that the function $f(k)$ from Question  \ref{i} would be equal to $k+1$ if we had a
statement similar to Theorem \ref{hex} for the game Hex$_1$.

It is also known \cite{Gale} that Theorem \ref{hex} is equivalent to the Brouwer fixed point theorem (in the sense that both theorems easily follow from each other). It would be interesting to find a fixed point-type statement which implies an answer to Question \ref{i}.

\section{Wreath products}

Let $B$ and $A$ be finitely generated groups with finite generating sets $S$ and $T$ and word metrics $|.|_B$ and
$|.|_A$ respectively. Then the (reduced) wreath product $B\wr A$ is the semidirect product of $B^{(A)}$ (the group of
all functions $C_0(A,B)=\{A\to B\}$ with finite support) and $A$ for the natural action of $A$ on $B^{(A)}$.  We define
the $\ell_1$-metric on $B^{(A)}$ as follows: $$ d_{\ell_1}(f,g)=\sum_{a\in A}d_B(f(a),g(a)).$$

For every $a\in A$ we denote by $B_a$ the group of functions $A\to B$ with the support $\{a\}$. We shall always
identify $B$ with the group $B_e$. Note that  $B_a=a\iv Ba$. If the operation in $B$ is written additively, we shall
write $\bigoplus_A B$ instead of $B^{(A)}$.

If $S$ is a generating set for $B$, and $T$ is a generating set for $A$, then $S\subset B$  together with
$T\subset A$ generate $B\wr A$. An explicit formula for the word metric $d_{B\wr A}$ on $B\wr A$ was found by Parry.

\begin{theorem}[\cite{Parry}]\label{th:Parry} Let $g=ba\in B\wr A$, where $a\in A, b\in B$. Let
$b=b_1^{a_1}...b_n^{a_n}$ for some $b_i\in B$ and $a_i\in A$ where $B$ is identified with the subgroup of $B^{(A)}$
consisting of all functions $A\to B$ with support $\{1\}$. Let $p$ be the shortest path in the Cayley graph of $A$ that
starts at 1, visits all vertices $a_i$ and ends at $a$. Then the word length of $|g|_{B\wr A}$ of $g$ in $B\wr A$ is
the length of $p$ plus $\sum |b_i|_B$.
\end{theorem}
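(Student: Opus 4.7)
The plan is to prove matching upper and lower bounds using the standard ``lamplighter'' picture: an element $g = ba \in B \wr A$ is a configuration consisting of an $A$-valued position $a$ together with a $B$-valued lamp function $b$ of finite support. A word in $S \cup T$ representing $g$ records a sequence of motions, where each $T$-letter moves the lamplighter by one step in the Cayley graph of $A$ and each $S$-letter $s$ right-multiplies the lamp value at the current position by $s$. The word must begin with the lamplighter at $1 \in A$ and trivial lamps and end with position $a$ and lamp configuration $b$.

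For the upper bound I would exhibit an explicit word of length $\length(p) + \sum |b_i|_B$. I walk from $1$ to $a$ along $p$ using $T$-letters, and the first time the walk reaches each $a_i$ I insert a $|b_i|_B$-letter geodesic $S$-word for $b_i$ before continuing. After executing the resulting word the lamplighter stands at $a$, the lamp at each $a_i$ carries the value $b_i$, and all other lamps are trivial, so the word indeed represents $g$.

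For the lower bound, let $w = x_1 \cdots x_N$ be any word representing $g$ in the generators, and let $\pi: B \wr A \to A$ be the projection killing $B^{(A)}$. The ordered subsequence of $T$-letters of $w$ spells a path $q$ in the Cayley graph of $A$ from $1$ to $\pi(g) = a$ whose length equals the number of $T$-letters of $w$. Each $S$-letter $x_j$ modifies only the lamp at the current position $\pi(x_1 \cdots x_{j-1})$, which is some vertex of $q$. Since the final lamp function $b$ is supported exactly on $\{a_1, \dots, a_n\}$, the path $q$ must visit every $a_i$, and hence $\length(q) \ge \length(p)$. Moreover the ordered product of those $S$-letters whose current position equals $a_i$ is exactly $b_i$ in $B$, so at least $|b_i|_B$ of the $S$-letters are applied at $a_i$. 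Summing over $i$ gives $N \ge \length(p) + \sum |b_i|_B$.

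\emph{Main obstacle.} The only subtle point is the semidirect-product bookkeeping in the lower bound: I need to verify that $q$ really visits every $a_i$ and that the $S$-letters used at $a_i$, possibly spread over several visits of $q$ to $a_i$, multiply to $b_i$ in the correct order. Both facts follow from the decomposition $B \wr A = B^{(A)} \rtimes A$, under which the coordinates of $B^{(A)}$ evolve independently along any expression for $g$, so the contributions at distinct $A$-positions cannot interact or cancel.
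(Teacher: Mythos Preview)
The paper does not prove this statement at all: it is quoted as a known result from Parry's paper \cite{Parry} and immediately used. Your proposal is the standard and correct argument, with one implicit assumption worth making explicit: for the lower bound you need that the $a_i$ are exactly the support of $b$ (distinct, with each $b_i \ne e$), since otherwise the path $q$ is not forced to visit an $a_i$ whose lamp value is trivial, and the formula as literally stated would fail.
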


The following statement immediately follows from Theorem \ref{th:Parry}.

\begin{cor}\label{c:Parry}
(1) For every $a\in A$ the metric on $B_a\cong B$ induced from $B\wr A$ is the metric on $B$ shifted by $2|a|$:
$d_{B\wr A}(x^a,y^a)=d_B(x,y)+2|a|$ for $x\ne y\in B$.

(2)  $d_{B\wr A}(f,g)\ge d_{\ell_1}(f,g)$ for all $f,g\in B^{(A)}$. Where $d_{\ell_1}$ is the $\ell_1$-metric on
$B^{(A)}$.
\end{cor}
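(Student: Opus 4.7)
The plan is to derive both parts by a direct application of Parry's formula (Theorem~\ref{th:Parry}) to carefully chosen elements of $B\wr A$.

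For part~(1), I would compute $d_{B\wr A}(x^a,y^a) = |(x^a)\iv y^a|_{B\wr A}$. Since $(x^a)\iv y^a = (x\iv y)^a$, this element lies in $B^{(A)}$ (its $A$-component is trivial) and, viewed as a function $A\to B$, is supported at a single point of $A$-length $|a|_A$ with value $x\iv y$. In Parry's decomposition this is the single-term product $b_1^{a_1}$ with $b_1 = x\iv y$ and $|a_1|_A = |a|_A$. The shortest path in the Cayley graph of $A$ that starts at $1$, visits $a_1$, and ends at $1$ (the $A$-part of our element) has length exactly $2|a|_A$. Adding $|b_1|_B = d_B(x,y)$ yields $d_{B\wr A}(x^a,y^a) = d_B(x,y) + 2|a|$, where the hypothesis $x\ne y$ just ensures the element is non-trivial so that the formula for shifted metrics applies.

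For part~(2), set $h = f\iv g \in B^{(A)}$, so that $d_{B\wr A}(f,g) = |h|_{B\wr A}$. The pointwise identity $(f\iv g)(a) = f(a)\iv g(a)$ gives $|h(a)|_B = d_B(f(a),g(a))$ for every $a\in A$. Enumerate the finite support of $h$ as $a_1,\dots,a_n$, so that $h = h(a_1)^{a_1}\cdots h(a_n)^{a_n}$ appears in Parry's form with $b_i = h(a_i)$, and with $A$-part equal to $1$. By Theorem~\ref{th:Parry}, $|h|_{B\wr A}$ equals the length of a shortest tour in the Cayley graph of $A$ starting at $1$, visiting each $a_i$, and returning to $1$, plus $\sum_{i=1}^n |b_i|_B$. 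The tour length is non-negative, and
\[
\sum_{i=1}^n |b_i|_B = \sum_{i=1}^n d_B(f(a_i),g(a_i)) = \sum_{a\in A} d_B(f(a),g(a)) = d_{\ell_1}(f,g),
\]
which immediately yields $d_{B\wr A}(f,g) \ge d_{\ell_1}(f,g)$.

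No serious obstacle is anticipated: the proof is essentially a careful unpacking of Parry's formula. The only points requiring attention are recognizing that in both parts the $A$-component of the relevant element is trivial, so Parry's path is a closed tour based at $1$, and matching $|h(a)|_B$ with $d_B(f(a),g(a))$ pointwise to identify the $\ell_1$-metric.
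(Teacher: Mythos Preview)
Your proposal is correct and follows exactly the approach the paper intends: the paper simply states that the corollary ``immediately follows from Theorem~\ref{th:Parry}'' without further detail, and your argument is the natural unpacking of that claim. Both parts amount precisely to reading off Parry's formula for an element of $B^{(A)}$ with trivial $A$-component, as you have done.
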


We define $B_k$ to be the $k$th iterated wreath product of $\Z$. Formally, $B_0=\Z=\la b_0\ra$ and if $B_k=\la
b_0,...,b_k\ra$ is already constructed, then $$B_{k+1}=B_k\wr\Z$$ where the ``top" $\Z$ is generated by $b_{k+1}$.

By induction we define a canonical subgroup $D_k\cong
\bigoplus\Z\subset B_k$: $D_0=B_0=\Z$ and $D_{k+1}=\bigoplus_{i\in\Z}D_k$.
Note that $D_k$ is a sum of copies $\Z_{\vec i}$ of $\Z$ indexed by vectors $\vec i\in \Z^k$ with $\|\vec
i\|_1=|i_1|+\dots+|i_k|$. Let $d_k$ be the metric on $D_k$ induced from $B_k$.

\begin{prop}\label{d-shift}
The metric $d_k$ restricted to the summand $\Z\subset D_k$ indexed by $\vec i$ is the standard metric shifted by $\|\vec i\|$: $$|x|_{d_k}=|x|+\|\vec i\|_1$$ for $x\in\Z$, $x\ne 0$.
\end{prop}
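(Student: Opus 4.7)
The plan is induction on $k$. The base case $k=0$ is tautological: $D_0 = B_0 = \Z$ carries its standard word metric, and the empty index vector $\vec i \in \Z^0$ has norm zero, so $|x|_{d_0} = |x|$ holds trivially.

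For the inductive step, fix $\vec i = (i_1,\ldots,i_k) \in \Z^k$ and set $i' = (i_1,\ldots,i_{k-1})$. Under the defining identification $D_k = \bigoplus_{j \in \Z} D_{k-1}$, the summand $\Z_{\vec i} \subset D_k$ sits naturally inside the copy of $D_{k-1}$ indexed by $j = i_k$, where it coincides with the summand $\Z_{i'}$ of that copy. Composing with the embedding $D_k \hookrightarrow B_{k-1}^{(\Z)} \subset B_k = B_{k-1} \wr \la b_k \ra$, an element $x \in \Z_{\vec i}$ corresponds to the function supported at $\{b_k^{i_k}\}$ whose value at $b_k^{i_k}$ is the element $y \in \Z_{i'} \subset D_{k-1}$ matching $x$.

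Next, I would apply Corollary \ref{c:Parry}(1) to the wreath product $B_k = B_{k-1} \wr \la b_k \ra$ with $a = b_k^{i_k}$: the word metric on the subgroup $(B_{k-1})_{b_k^{i_k}}$ induced from $B_k$ is the word metric on $B_{k-1}$ shifted by a constant coming from $|i_k|$. Restricting to $D_{k-1} \subset B_{k-1}$ gives the same shift applied to $d_{k-1}$; restricting further to $\Z_{i'} \subset D_{k-1}$ and invoking the inductive hypothesis $|y|_{d_{k-1}} = |y| + \|i'\|_1$ yields the standard metric on $\Z$ shifted by a sum which, over the $k$ nested wreath product layers, collects into $\|\vec i\|_1$. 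I do not anticipate any serious obstacle beyond bookkeeping: one must verify that the embedding $\Z_{\vec i} \hookrightarrow D_{k-1} \hookrightarrow B_{k-1} \hookrightarrow B_k$ lands precisely in the subgroup where Corollary \ref{c:Parry}(1) applies, and that the shifts from the $k$ layers telescope correctly.
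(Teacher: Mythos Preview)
Your proposal is correct and follows essentially the same route as the paper's own proof: induction on $k$, with the inductive step obtained by applying Corollary~\ref{c:Parry}(1) to the outermost wreath product $B_k = B_{k-1}\wr\Z$ to peel off the shift contributed by the last coordinate $i_k$, then invoking the inductive hypothesis on the inner $\Z_{i'}\subset D_{k-1}$. The only difference is cosmetic (you index the step as $k-1\to k$ while the paper writes $k\to k+1$), and your bookkeeping about where $\Z_{\vec i}$ sits inside $(B_{k-1})_{b_k^{i_k}}$ is exactly the verification the paper leaves implicit.
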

\begin{proof}
Induction on $k$. Let $\Z$ be indexed by $\vec i\in\Z^{k+1}$. By Corollary~\ref{c:Parry}(1)
$|x|_{d_{k+1}}=|x|_{d_k}+|i_{k+1}|$ for $x\in\Z_{\vec i}\subset (B_k)_{i_{k+1}}\subset B_k\wr\Z$. By induction
assumption, $|x|_{d_k}=|x|+|i_1|+\dots+|i_k|$. Then $|x|_{d_{k+1}}=|x|+\|\vec i\|$.\end{proof}

\begin{prop}\label{bilipschitz}
For any $r>0$ the idenity map $$id:(\bigoplus_{\|\vec i\|_1\le r}\Z,\ell_1)\to(\bigoplus_{\|\vec i\|_1\le
r}\Z,d_k)\subset D_k$$ is $(1,r)$-bi-Lipschitz:$$
\|x-y\|_{\ell_1}\le d_k(x,y)\le r\|x-y\|_{\ell_1}.
$$
\end{prop}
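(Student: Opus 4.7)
My plan is to establish the two inequalities separately. Both sides of $D_k \cong \bigoplus_{\vec i \in \Z^k}\Z_{\vec i}$ carry a natural decomposition into coordinate copies of $\Z$, so every element $x$ can be written uniquely as a finite sum $x = \sum_{\vec i}x_{\vec i}$ with $x_{\vec i}\in\Z_{\vec i}$, and $\|x-y\|_{\ell_1} = \sum_{\vec i}|x_{\vec i}-y_{\vec i}|$.

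For the lower bound $\|x-y\|_{\ell_1}\le d_k(x,y)$, the strategy is an induction on $k$ using Corollary~\ref{c:Parry}(2) at each level. Write $B_k = B_{k-1}\wr\Z$ and apply Corollary~\ref{c:Parry}(2) to the outermost wreath factor: for any $f,g\in B_{k-1}^{(\Z)}$,
$$
d_k(f,g)\;\ge\;\sum_{j\in\Z}d_{B_{k-1}}\!\bigl(f(j),g(j)\bigr).
$$
Applied to $x,y\in D_k\subset B_{k-1}^{(\Z)}$ and iterated through the nested wreath structure, this unwinds one coordinate at a time until we reach the innermost copies of $\Z$ with their standard metric, yielding exactly $d_k(x,y)\ge\sum_{\vec i}|x_{\vec i}-y_{\vec i}| = \|x-y\|_{\ell_1}$.

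For the upper bound $d_k(x,y)\le r\|x-y\|_{\ell_1}$, the plan is direct: write $z = x-y = \sum_{\vec i\in S} z_{\vec i}$ where $S=\{\vec i:z_{\vec i}\neq 0\}\subset\{\vec i:\|\vec i\|_1\le r\}$, and bound $d_k(x,y) = |z|_{d_k}$ by the triangle inequality:
$$
d_k(x,y)\;\le\;\sum_{\vec i\in S}|z_{\vec i}|_{d_k}.
$$
By Proposition~\ref{d-shift} each term equals $|z_{\vec i}|+\|\vec i\|_1$. Since $z_{\vec i}\in\Z\setminus\{0\}$ forces $|z_{\vec i}|\ge 1$ and the hypothesis gives $\|\vec i\|_1\le r$, each summand is at most $(r+1)|z_{\vec i}|$, and the sum telescopes to the desired bound on $\|x-y\|_{\ell_1}$ (up to the mild discrepancy between $r$ and $r+1$, which either reflects a convention about the shift constant or can be absorbed by noting $\|\vec i\|_1\le r\cdot |z_{\vec i}|$ with an extra argument for the $\vec i = \vec 0$ term).

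The main obstacle I anticipate is not the inductive unwinding itself but matching the precise constant $r$ in the statement: the naive triangle-inequality argument naturally yields $r+1$ rather than $r$, and pinning down the tighter estimate requires a slightly more careful accounting, for instance by handling the $\vec i=\vec 0$ summand (which carries no shift) separately from the $\vec i\neq\vec 0$ summands (where $1\le\|\vec i\|_1\le r$), or by constructing a single Parry-type path that amortizes the cost of visiting all active coordinates rather than treating them in isolation.
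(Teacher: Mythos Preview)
Your proposal is correct and follows essentially the same approach as the paper: the upper bound via Proposition~\ref{d-shift} and the triangle inequality, and the lower bound by induction on $k$ using Corollary~\ref{c:Parry}(2). The $r$ versus $r+1$ discrepancy you flag is real and is glossed over in the paper as well (it writes that each standard basis vector has $d_k$-norm at most $r$, whereas Proposition~\ref{d-shift} gives $1+\|\vec i\|_1\le r+1$); this is harmless for the applications, which only use the order of growth.
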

\begin{proof}
In view of Lemma~\ref{d-shift} the $d_k$-norm of every vector from the standard basis of $\bigoplus_{\|\vec
i\|\le r}\Z$ does not exceed $r$. This implies that the above identity map is $r$-Lipschitz.

The inequality $\|x\|_{\ell_1}\le \|x\|_{d_k}$ will be proven by induction on $k$. Let $$x\in\bigoplus_{\vec
i\in\Z^{k+1},\|\vec i\|\le r}\Z\subset D_{k+1}=\bigoplus_{j\in\Z}D_k.$$ Thus, $x=(x_{s})$ with $x_s\in (D_k)_s$. By
Corollary~\ref{c:Parry}(2), $\|x\|_{d_{k+1}}\ge\sum_{s\in\Z}\|x_{s}\|_{d_k}$. By induction assumption,
$\|x_{s}\|_{d_k}\ge\|x_s\|_{\ell_1}$. Therefore, $$
\|x\|_{d_{k+1}}\ge\sum_{s\in\Z}\|x_s\|_{\ell_1}=\|x\|_{\ell_1}.
$$
\end{proof}

Let $P\subset\R^k$ be a polytope with integral vertices. The Ehrhart polynomial $L(P,t)$ of $P$ is defined as
$$L(P,t)=|tP\cap \Z^k|$$ where $tP$ is dilation of $P$ and $|\ |$ denotes the cardinality. Thus $L(P,t)$ is the number
of integer points in the polytope $P$ dilated by the factor $t$. It is known that $L(P,t)$ is a polynomial of degree
$k$ with positive coefficients \cite{BR1}.

The {\em regular cross-polytope} in $\R^k$ is the polytope spanned by the vertices $\{\pm e_i\mid i=1,\dots,k\}$ where
$\{e_i\}$ is the orthonormal basis of $\R^k$. The Ehrhart polynomial for the regular cross-polytope $P_k\subset\R^k$ is
known \cite{BR1}: $$ L(P_k,x)=\sum_{i=0}^k\frac{2^ix(x-1)\dots(x-i+1)}{i!}.$$

\begin{thm}\label{estimate} For each $k$ there are $\alpha$ and $\beta$ such that
$$\di{(\lambda,R)}(B_k)\ge \beta\lambda^k$$ for $R<\alpha\lambda^{\frac{k}{2}}$ for sufficiently large $\lambda$.
\end{thm}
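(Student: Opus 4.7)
My plan is to embed a large isometric copy of a binary cube $\{0,1\}^n$ into $B_k$ through the subgroup $D_k$, apply the cube lower bound from Corollary \ref{mainL}, and transfer it to $B_k$ via Proposition \ref{p:6}. In short, I want to realize as many summands of $D_k$ as possible at small $d_k$-distance from the identity, which is exactly what Proposition \ref{bilipschitz} allows, and then count them with the Ehrhart polynomial of the cross-polytope.

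Concretely, for a given large $\lambda$ I would set $r := \lfloor \lambda/4\rfloor$ and let $n := L(P_k, r)$ be the number of lattice points $\vec i\in\Z^k$ with $\|\vec i\|_1\le r$. Since $L(P_k,x)$ is a polynomial in $x$ of degree $k$ with leading term $\frac{2^k}{k!}x^k$, this yields $n\ge \beta\lambda^k$ for some constant $\beta>0$ depending only on $k$ once $\lambda$ is large enough. The subgroup $\bigoplus_{\|\vec i\|_1\le r}\Z\subset D_k\subset B_k$ has exactly these $n$ summands, and sending the standard basis of $\{0,1\}^n$ to their generators gives an isometric inclusion of $(\{0,1\}^n,\ell_1)$ into this direct sum with its $\ell_1$-metric. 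Proposition \ref{bilipschitz} then upgrades this to a $(1,r)$-bi-Lipschitz map $\phi:(\{0,1\}^n,\ell_1)\to B_k$, i.e.\ a coarse embedding with $\rho_1(t)=t$ and $\rho_2(t)=rt$.

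With $\phi$ in hand, I invoke Proposition \ref{p:6} to get
\[
\di{(\lambda,R)}(B_k)\;\ge\;\di{(\lambda/r,R)}(\{0,1\}^n,\ell_1)\;\ge\;\di{(4,R)}(\{0,1\}^n,\ell_1),
\]
where the second step uses $\lambda/r\ge 4$ together with the obvious monotonicity of $\di{(\cdot,R)}$ in the first argument. I then choose $\alpha:=\sqrt{\beta}/4$, so that the hypothesis $R<\alpha\lambda^{k/2}$ forces $R<\sqrt{n}/4$. For $\lambda$ large enough that $n>64$, Corollary \ref{mainL} and monotonicity of $\di{(4,\cdot)}$ in the diameter parameter yield
\[
\di{(4,R)}(\{0,1\}^n,\ell_1)\;\ge\;\di{(4,\sqrt{n}/4)}(\{0,1\}^n,\ell_1)\;\ge\;n\;\ge\;\beta\lambda^k,
\]
which is the advertised bound.

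There is no deep obstacle in this outline; the only real care required is bookkeeping of the bi-Lipschitz constants together with the Ehrhart count so that the cube threshold $R<\sqrt{n}/4$ translates into the uniform bound $R<\alpha\lambda^{k/2}$ with $\alpha,\beta$ depending only on $k$. The slightly subtle point is matching the coloring scale: we pay a factor of $r$ in the upper bi-Lipschitz constant, so to keep $\lambda/r\ge 4$ we must shrink $r$ to be proportional to $\lambda$, and this is exactly what reduces the available cube size from the trivial $\lambda^k$-type count to the correct $\sqrt{n}$-sized diameter window $\lambda^{k/2}$.
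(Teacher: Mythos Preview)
Your proof is correct and follows essentially the same approach as the paper: embed $\bigoplus_{\|\vec i\|_1\le r}\Z$ into $B_k$ via the $(1,r)$-bi-Lipschitz map of Lemma~\ref{bilipschitz}, pull back a $(\lambda,R)$-coloring to a $(\lambda/r,R)$-coloring, and then invoke the binary-cube lower bound of Corollary~\ref{mainL} with $n=L(P_k,r)$ summands. The only cosmetic difference is that you explicitly restrict to the cube $\{0,1\}^n$ before applying Corollary~\ref{mainL}, whereas the paper leaves that step implicit, and you cite Lemma~\ref{p:6} rather than Lemma~\ref{subgroup}; these are equivalent here.
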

\begin{proof} Pick numbers $r$ and $R$  such that $\frac{\lambda}{2r+1}\ge 5$ and $R<\sqrt{L\left(P_k,r\right)}/4$.
By Lemma~\ref{bilipschitz} the idenity map $$id:(\bigoplus_{\|\vec i\|_1\le r}\Z,\ell_1)\to(\bigoplus_{\|\vec
i\|_1\le r}\Z,d_k)\subset D_k$$ is $(1,r)$-bilipschitz.

Considering the subspaces and applying Lemma~\ref{subgroup} we obtain $$\di{\left(\lambda,R\right)} (B_k,d_k)\ge
\di{\left(\lambda,R\right)} (D_k,d_k)\ge \di{\left(\lambda,R\right)} (\bigoplus_{\|\vec i\|_1\le r} \Z, d_k)$$
$$\ge\di{\left(\frac{\lambda}{r},R\right)}\bigoplus_{\|\vec i\|_1\le r}\Z\ge |\{\vec i\in\Z^k\mid \|\vec i\|_1\le r\}|=
L\left(P_k,r\right).$$ Here the last equality is by definition. The preceeding inequality holds  true by
Corollary~\ref{mainL} provided   $r<\lambda/4$ and  $R<\alpha\lambda^{\frac{k}{2}}$ for some $\alpha$ which depends on
$k$ only. In that case, $\di{\left(\lambda,R\right)}(B_k)\ge\beta\lambda^{k}$ for some $\beta$ which depends on $k$
only.
\end{proof}

The proof of the following Lemma is similar to the proof of Lemma~\ref{bilipschitz}.

\begin{lem}\label{l3}
For any $r>0$ the idenity map $$id:(\bigoplus_{\|g\|_G\le r}\Z,\ell_1)\to(\bigoplus_{\|g\|_G\le r}\Z,d_{\Z\wr
G})\subset Z\wr G$$ is $(1,r)$-bi-Lipschitz.
\end{lem}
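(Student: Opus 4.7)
The plan is to mirror the proof of Proposition~\ref{bilipschitz} almost verbatim, exploiting the fact that $\Z \wr G$ is a ``one-level'' wreath product, so no induction on height is necessary.

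First I would establish the lower bound $\|x-y\|_{\ell_1} \le d_{\Z\wr G}(x,y)$. This is an immediate consequence of Corollary~\ref{c:Parry}(2), which says that the word metric on $\Z\wr G$ restricted to the base group $\Z^{(G)}$ dominates the $\ell_1$-metric. Since $\bigoplus_{\|g\|_G\le r}\Z$ sits inside $\Z^{(G)}$, the bound is inherited, and no recursion is needed (unlike in Proposition~\ref{bilipschitz}, where the induction on the iterated wreath height $k$ did the main work for this half).

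Next I would prove the upper bound $d_{\Z\wr G}(x,y)\le r\,\|x-y\|_{\ell_1}$. By Corollary~\ref{c:Parry}(1), the metric induced from $\Z\wr G$ on each summand $\Z_g$ is the standard $\Z$-metric shifted by $2|g|_G$; hence the canonical generator of $\Z_g$ has $d_{\Z\wr G}$-norm at most $1+2|g|_G$, which is linear in $r$ whenever $|g|_G\le r$. Writing the displacement as $x-y=\sum_{g}n_g\,\mathbf{1}_g$, with the sum taken over $\{g: \|g\|_G \le r\}$ and $\sum_g|n_g|=\|x-y\|_{\ell_1}$, and applying the triangle inequality in the word metric, produces a bound of the form (const $\cdot\, r$)$\|x-y\|_{\ell_1}$.

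The only real subtlety is matching the stated constants exactly. As in the proof of Proposition~\ref{bilipschitz}, the phrase ``$(1,r)$-bi-Lipschitz'' appears to be used loosely: the honest upper Lipschitz constant obtained from the generator norm $1+2|g|_G$ is on the order of $2r+1$ rather than $r$. This affects only multiplicative constants, not the linear dependence on $r$ that subsequent arguments (e.g.\ the analogue of Theorem~\ref{estimate} for $\Z \wr G$) actually consume. No ingredient beyond Parry's formula and Corollary~\ref{c:Parry} is required.
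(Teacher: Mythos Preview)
Your proposal is correct and is precisely the adaptation the paper has in mind: the paper gives no separate proof for Lemma~\ref{l3} beyond the remark that it is ``similar to the proof of Lemma~\ref{bilipschitz},'' and you have carried out exactly that adaptation, using Corollary~\ref{c:Parry}(2) for the lower bound (with the correct observation that no induction is needed for a single-level wreath product) and Corollary~\ref{c:Parry}(1) plus the triangle inequality for the upper bound. Your remark about the loose constant $r$ versus $2r+1$ is also on point and matches the same imprecision already present in the proof of Lemma~\ref{bilipschitz}.
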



\begin{thm} \label{t3}
Let $G$ be a group of exponential growth.  Then for some exponential function $D$ the dimension growth of the group $\Z
\wr G$ with control $D$ is exponential.
\end{thm}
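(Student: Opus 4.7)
The plan is to mimic the proof of Theorem~\ref{estimate}, replacing the lattice polytope count $L(P_k,r)$ by the exponential ball count in $G$. Fix a finite symmetric generating set for $G$ and let $a>1$ satisfy $|B_G(r)|\ge C a^r$ for all $r$, which exists since $G$ has exponential growth.

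Let $\lambda$ be large and set $r=\lfloor\lambda/4\rfloor$. By Lemma~\ref{l3}, the identity embedding
$$
\iota:\Bigl(\bigoplus_{\|g\|_G\le r}\Z,\ \ell_1\Bigr)\longrightarrow \bigl(\Z\wr G,\ d_{\Z\wr G}\bigr)
$$
is $(1,r)$-bi-Lipschitz, i.e.\ $\|x-y\|_{\ell_1}\le d_{\Z\wr G}(\iota x,\iota y)\le r\|x-y\|_{\ell_1}$. Inside the source, consider the binary subcube $C=\{0,1\}^{n}$, where $n=|B_G(r)|$, obtained by restricting each summand to its $\{0,1\}$-part. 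This is an isometric copy of $(\{0,1\}^n,\ell_1)$.

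Next, given any $(\lambda,R)$-coloring of $\Z\wr G$, restrict it to $C$ viewed inside $(\Z\wr G,d_{\Z\wr G})$. Since $d_{\ell_1}\le d_{\Z\wr G}\le r\, d_{\ell_1}$ on $C$, any $d_{\ell_1}$-$(\lambda/r)$-path in $C$ is a $d_{\Z\wr G}$-$\lambda$-path, and diameters satisfy $\mathrm{diam}_{\ell_1}\le\mathrm{diam}_{d_{\Z\wr G}}$. Therefore the restriction is a $(\lambda/r,R)$-coloring of $(\{0,1\}^n,\ell_1)$, which (with $\lambda/r\ge 4$) gives
$$
\di{(\lambda,R)}(\Z\wr G)\ \ge\ \di{(4,R)}(\{0,1\}^n,\ell_1).
$$
By Corollary~\ref{mainL}, as soon as $n>64$ and $R<\sqrt n/4$ we have $\di{(4,R)}(\{0,1\}^n,\ell_1)\ge n$.

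To package the bound as a control function, choose $R=R(\lambda):=\sqrt{|B_G(\lfloor\lambda/4\rfloor)|}/4-1$. Because $|B_G(r)|\ge C a^r$, the function $R$ is at most exponential in $\lambda$, and for $\lambda$ sufficiently large the hypotheses of Corollary~\ref{mainL} are met, so
$$
\di{(\lambda,R(\lambda))}(\Z\wr G)\ \ge\ |B_G(\lfloor\lambda/4\rfloor)|\ \ge\ C\, a^{\lfloor\lambda/4\rfloor},
$$
which grows exponentially in $\lambda$. Finally, extending $R$ monotonically to an honest exponential function $D(\lambda)$ with $D(\lambda)\ge R(\lambda)$ (and recalling the monotonicity remark $d_{D,X}\le d_{D',X}$ when $D\ge D'$) yields an exponential control $D$ for which the $D$-controlled dimension growth of $\Z\wr G$ is exponential.

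The main technical point, as in Theorem~\ref{estimate}, is the bookkeeping of the two distinct metrics on $\bigoplus_{\|g\|_G\le r}\Z$ and the correct rescaling of the pair $(\lambda,R)$ under the $(1,r)$-bi-Lipschitz identification provided by Lemma~\ref{l3}; once this is done, the exponential growth of $G$ replaces the polynomial quantity $L(P_k,r)$ used in the iterated wreath product case and automatically produces exponential lower bounds for both the dimension and the required control.
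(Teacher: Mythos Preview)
Your argument follows the paper's proof closely: you use Lemma~\ref{l3} to compare the two metrics on $\bigoplus_{\|g\|\le r}\Z$, restrict a $(\lambda,R)$-coloring to the binary cube, and invoke Corollary~\ref{mainL}. Up to the display $\di{(\lambda,R(\lambda))}(\Z\wr G)\ge |B_G(\lfloor\lambda/4\rfloor)|$ everything is correct and matches the paper's reasoning (the paper takes $r=\lambda/5$ and cites Lemma~\ref{subgroup} instead of unwinding the coloring restriction by hand, but the content is the same).

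The final step, however, is backwards. From $|B_G(r)|\ge Ca^r$ you deduce that $R$ is \emph{at most} exponential, but the lower bound on the ball size gives a \emph{lower} bound on $R(\lambda)=\sqrt{|B_G(\lfloor\lambda/4\rfloor)|}/4-1$, not an upper bound. More importantly, you then choose an exponential $D$ with $D\ge R$ and appeal to the monotonicity $d_{D,X}\le d_{D',X}$ for $D\ge D'$; but this yields $d_{D,X}\le d_{R,X}$, which says nothing about an exponential lower bound for $d_{D,X}$. The inequality goes the wrong way. What you need is an exponential $D$ with $D(\lambda)\le R(\lambda)$, so that $d_{D,X}\ge d_{R,X}\ge |B_G(\lfloor\lambda/4\rfloor)|$ is exponential. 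This is available precisely because $R$ grows \emph{at least} exponentially: choose $\alpha>0$ small enough that $e^{\alpha\lambda}\le R(\lambda)$ for large $\lambda$, exactly as the paper does (it picks $\alpha$ with $e^{\alpha\lambda}<e^{\beta\lambda/5}/(4\lambda/5)$). With this correction your proof is complete and essentially identical to the paper's.
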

\begin{proof}
 Let $B_r$ be  the ball of radius $r$ in $G$.

In view of Lemma~\ref{l3} the identity map $$ Z=((\bigoplus_{g\in
B_r}\Z),\ell_1)\stackrel{i}\longrightarrow((\bigoplus_{g\in B_r}\Z),d)=X $$ satisfies $$ d_Z(z,z')\le  d_X(z,z')\le
rd_Z(z,z'). $$ Application of Lemma~\ref{subgroup} to the above quasi-isometries and Corollary~\ref{mainL} gives
us the chain of inequalities $$\di{(\lambda,R)}((\bigoplus_{g\in G}\Z),d)
\ge\di{(\lambda,R)}((\bigoplus_{g\in B_r}\Z),d)\ge
\di{\left(\frac{\lambda}{r},R\right)}(\bigoplus_{g\in B_r}\Z)\ge |B_r|$$ whenever
$$R\le\frac{\sqrt{|B_r|}}{4r}$$ and $\frac{\lambda}{r}> 4$ where $|B_r|$ denotes the  cardinality of the ball. Note
that $$\frac{\sqrt{|B_r|}}{4r}\ge \frac{e^{\beta r}}{4r}$$ for some $\beta>0$. We take $r=\lambda/5$ to satisfy the
second condition and take $\alpha$ such that $e^{\alpha\lambda}<\frac{e^{\beta\lambda/5}}{4\lambda/5}$. Then for
$R(\lambda)<e^{\alpha\lambda}$ we have the required condition.
\end{proof}

\begin{cor}\label{c:zzz} For some exponential function $D$ the dimension growth of the solvable of class 3 group $\Z\wr (\Z \wr \Z)$ with
control $D$ is exponential.
\end{cor}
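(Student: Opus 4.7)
The corollary is essentially an immediate application of Theorem \ref{t3} to the group $G = \Z\wr\Z$, so my proof plan is short.

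First, I would verify that $G = \Z\wr\Z$ has exponential word growth with respect to any (hence some) finite generating set. This is classical, and Parry's formula (Theorem \ref{th:Parry}) gives a direct quantitative version: for every $n\ge 1$ and every function $f:\Z\to\Z$ supported on $\{0,1,\ldots,n\}$ taking only the values $0$ and $1$, Theorem \ref{th:Parry} shows that the corresponding element of $\Z\wr\Z$ has word length at most roughly $2n$, while there are $2^{n+1}$ such functions. Hence the ball of radius $2n$ in $\Z\wr\Z$ has at least $2^{n+1}$ elements, so the growth is at least exponential (and is well known to be at most exponential for any finitely generated group).

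Second, I would apply Theorem \ref{t3} with this $G$ to conclude that there exists an exponential control function $D$ for which the dimension growth of $\Z\wr(\Z\wr\Z)$ with control $D$ is exponential.

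Finally, I would record the solvability claim: for any group $H$, the wreath product $\Z\wr H$ has derived length exactly $1$ plus the derived length of $H$, because the base group $\bigoplus_H \Z$ is abelian and the derived subgroup of $\Z\wr H$ surjects onto $H'$. Starting from derived length $1$ for $\Z$, we get derived length $2$ for $\Z\wr\Z$ and derived length $3$ for $\Z\wr(\Z\wr\Z)$.

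There is no substantive obstacle: the entire content lives in Theorem \ref{t3}, and the only genuinely nontrivial input used in verifying its hypothesis is the exponential growth of $\Z\wr\Z$, which the Parry formula from Theorem \ref{th:Parry} delivers immediately.
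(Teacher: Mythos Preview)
Your proposal is correct and follows exactly the paper's approach: apply Theorem~\ref{t3} with $G=\Z\wr\Z$, using that $\Z\wr\Z$ has exponential growth. The paper's own proof is the single sentence ``Indeed, the volume growth function of $\Z\wr\Z$ is exponential''; your additional justification via Parry's formula and the derived-length computation are sound elaborations but not required.
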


\proof Indeed, the volume growth function of $\Z\wr \Z$ is exponential.
\endproof

\section{Lower bound for dimension growth of the R. Thompson group}

In this section, it will be  convenient to view the R. Thompson group as a diagram group over the semigroup
presentation $\la x\mid x^2=x\ra$.

Let us recall the definition of a {\em diagram group} (see \cite{GS1,GS2} for more formal definitions). A (semigroup)
{\em diagram} is a planar directed labeled graph tesselated into cells, defined up to an isotopy of the plane. Each
diagram $\Delta$ has the top path $\topp(\Delta)$, the bottom path $\bott(\Delta)$, the initial and terminal vertices
$\iota(\Delta)$ and $\tau(\Delta)$. These are common vertices of $\topp(\Delta)$ and $\bott(\Delta)$.  The whole
diagram is situated between the top and the bottom paths, and every edge of $\Delta$ belongs to a (directed) path in
$\Delta$ between $\iota(\Delta)$ and $\tau(\Delta)$. More formally, let $X$ be an alphabet. For every $x\in X$ we
define the {\em trivial diagram} $\varepsilon(x)$ which is just an edge labeled by $x$. The top and bottom paths of
$\varepsilon(x)$ are equal to $\varepsilon(x)$, $\iota(\varepsilon(x))$ and $\tau(\varepsilon(x))$ are the initial and
terminal vertices of the edge. If $u$ and $v$ are words in $X$, a {\em cell} $(u\to v)$ is a planar graph consisting of
two directed labeled paths, the top path labeled by $u$ and the bottom path labeled by $v$, connecting the same points
$\iota(u\to v)$ and $\tau(u\to v)$. There are three operations that can be applied to diagrams in order to obtain new
diagrams.

1. {\bf Addition.} Given two diagrams $\Delta_1$ and $\Delta_2$, one can identify $\tau(\Delta_1)$ with
$\iota(\Delta_2)$. The resulting planar graph is again a diagram denoted by $\Delta_1+\Delta_2$, whose top (bottom)
path is the concatenation of the top (bottom) paths of $\Delta_1$ and $\Delta_2$. If $u=x_1x_2\ldots x_n$ is a word in
$X$, then we denote $\varepsilon(x_1)+\varepsilon(x_2)+\cdots + \varepsilon(x_n)$ ( i.e. a simple path labeled by $u$)
by $\varepsilon(u)$  and call this diagram also {\em trivial}.

2. {\bf Multiplication.} If the label of the bottom path of $\Delta_2$ coincides with the label of the top path of
$\Delta_1$, then we can {\em multiply} $\Delta_1$ and $\Delta_2$, identifying $\bott(\Delta_1)$ with $\topp(\Delta_2)$.
The new diagram is denoted by $\Delta_1\circ \Delta_2$. The vertices $\iota(\Delta_1\circ \Delta_2)$ and
$\tau(\Delta_1\circ\Delta_2)$ coincide with the corresponding vertices of $\Delta_1, \Delta_2$, $\topp(\Delta_1\circ
\Delta_2)=\topp(\Delta_1),
\bott(\Delta_1\circ \Delta_2)=\bott(\Delta_2)$.

\begin{center} 
\unitlength=1mm
\special{em:linewidth 0.4pt}
\linethickness{0.4pt}
\begin{picture}(124.41,55.00)
\put(1.00,30.00){\circle*{2.00}}
\put(46.00,30.00){\circle*{2.00}}
\put(1.00,30.00){\line(1,0){45.00}}
\bezier{320}(1.00,30.00)(24.00,55.00)(46.00,30.00)
\bezier{332}(1.00,30.00)(24.00,5.00)(46.00,30.00)
\put(24.00,35.00){\makebox(0,0)[cc]{$\Delta_1$}}
\put(24.00,25.00){\makebox(0,0)[cc]{$\Delta_2$}}
\put(24.00,10.00){\makebox(0,0)[cc]{$\Delta_1\circ\Delta_2$}}
\put(66.00,30.00){\circle*{2.00}}
\put(94.00,30.00){\circle*{2.00}}
\put(123.00,30.00){\circle*{2.00}}
\bezier{164}(66.00,30.00)(80.00,45.00)(94.00,30.00)
\bezier{152}(66.00,30.00)(81.00,17.00)(94.00,30.00)
\bezier{172}(94.00,30.00)(109.00,46.00)(123.00,30.00)
\bezier{168}(94.00,30.00)(110.00,15.00)(123.00,30.00)
\put(80.00,30.00){\makebox(0,0)[cc]{$\Delta_1$}}
\put(109.00,30.00){\makebox(0,0)[cc]{$\Delta_2$}}
\put(94.00,10.00){\makebox(0,0)[cc]{$\Delta_1+\Delta_2$}}
\end{picture}
\end{center}

3. {\bf Inversion.} Given a diagram $\Delta$, we can flip it about a horizontal line obtaining a new diagram
$\Delta\iv$ whose top (bottom) path coincides with the bottom (top) path of $\Delta$.

\begin{defin} A diagram over a collection of cells $P$ is any planar
graph obtained from the trivial diagrams and cells of $P$ by the operations of addition, multiplication and inversion.
If the top path of a diagram $\Delta$ is labeled by a word $u$ and the bottom path is labeled by a word $v$, then we
call $\Delta$ a $(u,v)$-diagram over $P$.
\end{defin}

Two cells in a diagram form a {\em dipole} if the bottom part of the first cell coincides with the top part of the
second cell, and the cells are inverses of each other. In this case, we can obtain a new diagram removing the two cells
and replacing them by the top path of the first cell. This operation is called {
\em elimination of dipoles}. The new diagram is called {\em equivalent}
to the initial one. A diagram is called {\em reduced} if it does not contain dipoles. It is proved in \cite[Theorem
3.17]{GS1} that every diagram is equivalent to a unique reduced diagram.

 If the
top and the bottom paths of a diagram are labeled by the same word $u$, we call it a {\em spherical} $(u,u)$-diagram.
Now let $P=\{c_1,c_2,\ldots\}$ be a collection of cells. The diagram group $\DG(P,u)$ corresponding to the collection
of cells $P$ and a word $u$ consists of all reduced spherical $(u,u)$-diagrams obtained from the  cells of $P$ and
trivial diagrams by using the three operations mentioned above. The product $\Delta_1\Delta_2$ of two  diagrams
$\Delta_1$ and $\Delta_2$ is the reduced diagram obtained by removing all dipoles from $\Delta_1\circ\Delta_2$. The
fact that $\DG(P,u)$ is a group is proved in \cite{GS1}.

\begin{ex} If $X$
consists of one letter $x$ and $P$ consists of one cell $x\to x^2$, then the group $\DG(P,x)$ is the R. Thompson group
$F$ \cite{GS1}.
\end{ex}

Here are the diagrams representing the two standard generators $x_0, x_1$ of the R. Thompson group $F$. All edges are
labeled by $x$ and oriented from left to right, so we omit the labels and orientation of edges.

\begin{center} 
\unitlength=1mm
\special{em:linewidth 0.4pt}
\linethickness{0.4pt}
\begin{picture}(94.00,50.00)
\put(3.00,24.00){\circle*{2.00}}
\put(13.00,24.00){\circle*{2.00}}
\put(23.00,24.00){\circle*{2.00}}
\put(33.00,24.00){\circle*{2.00}}
\put(53.00,24.00){\circle*{2.00}}
\put(63.00,24.00){\circle*{2.00}}
\put(73.00,24.00){\circle*{2.00}}
\put(83.00,24.00){\circle*{2.00}}
\put(93.00,24.00){\circle*{2.00}}
\put(3.00,24.00){\line(1,0){10.00}}
\put(13.00,24.00){\line(1,0){10.00}}
\put(23.00,24.00){\line(1,0){10.00}}
\put(53.00,24.00){\line(1,0){10.00}}
\put(63.00,24.00){\line(1,0){10.00}}
\put(73.00,24.00){\line(1,0){10.00}}
\put(83.00,24.00){\line(1,0){10.00}}
\bezier{120}(13.00,24.00)(23.00,35.00)(33.00,24.00)
\bezier{120}(3.00,24.00)(12.00,13.00)(23.00,24.00)
\bezier{256}(3.00,24.00)(19.00,52.00)(33.00,24.00)
\bezier{256}(3.00,24.00)(14.00,-4.00)(33.00,24.00)
\bezier{132}(63.00,24.00)(71.00,37.00)(83.00,24.00)
\bezier{108}(53.00,24.00)(64.00,15.00)(73.00,24.00)
\bezier{208}(53.00,24.00)(70.00,45.00)(83.00,24.00)
\bezier{176}(53.00,24.00)(65.00,8.00)(83.00,24.00)
\bezier{296}(53.00,24.00)(67.00,55.00)(93.00,24.00)
\bezier{296}(53.00,24.00)(62.00,-6.00)(93.00,24.00)
\put(18.00,2.00){\makebox(0,0)[cc]{$x_0$}}
\put(73.00,2.00){\makebox(0,0)[cc]{$x_1$}}
\end{picture}
\end{center}

It is easy to represent, say, $x_0$ as a product of sums of cells and trivial diagrams: $$x_0=\left(x\to
x^2\right)\circ \left(\varepsilon\left(x\right)+\left(x\to x^2\right)\right)\circ \left(\left(x\to x^2\right)\iv
+\varepsilon\left(x\right)\right)\circ \left(\left(x\to x^2\right)\iv\right).$$

There is a natural {\em diagram metric} on every diagram group $\DG(P,u)$: $\dist(\Delta,\Delta')$ is the number of
cells in the diagram $\Delta\iv\Delta'$.

\begin{lemma}[\cite{burillo, AGS}]\label{bu} For the R. Thompson group $F$,
the diagram metric  is $(6,2)$-quasi-isometric to the word metric  corresponding to the standard generating set $\{x_0,
x_1\}$.
\end{lemma}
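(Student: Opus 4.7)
The plan is to reduce the claim, via left-invariance of both metrics, to comparing the word length $|\Delta|_{\text{word}}$ with respect to $\{x_0,x_1\}$ and the diagram length $|\Delta|_{\text{diag}}$ (the number of cells in the reduced form of $\Delta$) for a spherical $(x,x)$-diagram $\Delta$, and then to establish the two sides of the quasi-isometry inequality separately.

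For the upper bound $|\Delta|_{\text{diag}}\le 6\,|\Delta|_{\text{word}}$, I would inspect the pictures of the reduced diagrams for the generators displayed above and verify that each of $x_0^{\pm 1}$ and $x_1^{\pm 1}$ has at most $6$ cells (namely $4$ for $x_0^{\pm 1}$ and $6$ for $x_1^{\pm 1}$). If $w=s_1\cdots s_n$ is a word representing $\Delta$ with $s_i\in\{x_0^{\pm 1},x_1^{\pm 1}\}$, then the concatenation $\Delta_{s_1}\circ\cdots\circ\Delta_{s_n}$ has at most $6n$ cells. Since dipole elimination strictly decreases the cell count (removing two cells at a time) and the reduced form is unique by \cite[Theorem 3.17]{GS1}, the reduced diagram representing $\Delta$ also has at most $6n$ cells, which gives the upper bound with no additive constant.

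For the lower bound $|\Delta|_{\text{word}}\le 6\,|\Delta|_{\text{diag}}+2$, I would pass to the tree-pair realization of $F$. A reduced spherical $(x,x)$-diagram with $2k$ cells ($k$ of type $x\to x^2$ and $k$ of type $x^2\to x$) corresponds bijectively to a pair of finite binary trees $(T_-(\Delta),T_+(\Delta))$ each with $k+1$ leaves. I would then invoke the Fordham--Burillo normal form/automaton analysis cited in \cite{burillo, AGS}, which assigns to such a tree pair a word in $\{x_0,x_1\}$ whose length is bounded by a constant times the number of carets; tracking the constants yields a bound of the form $6k+2=3\,|\Delta|_{\text{diag}}+2\le 6\,|\Delta|_{\text{diag}}+2$.

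The main obstacle is the lower bound. The upper bound is just bookkeeping from the pictures, but the linearity of word length in the number of carets is substantial: the naive rewriting via the infinite generating set $\{x_n=x_0^{-n}x_1x_0^n:n\ge 0\}$ produces words of quadratic length in $k$, and the improvement to a linear bound with the correct constant is the main content of \cite{burillo}, refined in \cite{AGS}. Once that estimate is in hand, matching it against the $6$-cell inspection of the generators gives the stated $(6,2)$-quasi-isometry.
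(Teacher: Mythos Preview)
The paper does not give its own proof of this lemma: it is stated purely as a citation of \cite{burillo, AGS}, with no argument supplied. So there is no ``paper's proof'' to compare your proposal against; the authors are simply importing the result.

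That said, your outline is an accurate summary of how the argument runs in the cited sources. The upper bound is indeed immediate from the cell counts of the generator diagrams (and the paper itself displays these, confirming $x_0$ has $4$ cells and $x_1$ has $6$), together with the fact that dipole cancellation only removes cells. Your identification of the lower bound as the substantive part, requiring the Burillo/Fordham-type normal-form analysis to get a linear rather than quadratic word-length estimate in the number of carets, is exactly right and is precisely what the citations are there for. The only point on which I would be careful is the exact bookkeeping of the constants $6$ and $2$: you assert the Fordham/Burillo bound yields $6k+2$, but you would need to check this against the actual statements in \cite{burillo, AGS} rather than derive it on the page, since the precise additive and multiplicative constants come out of a somewhat delicate case analysis there.
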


\begin{prop}\label{embedding}
There are constants $C_1,C_2>0$ such that for every $n$ there is a group embedding of $\xi_n:\Z^{2^n}\to F$ into the
Thompson group $F$ such that $\xi_n$ is a $(C_1n,C_2)$-quasi-isometric embedding: $$\frac{1}{C_1n}|x-x'|_1-C_2\le
d_F(\xi_n(x),\xi_n(x'))\le C_1n\|x-x'\|_1+C_2$$ where $\|.\|_1$ is the standard $l_1$-metric on $\Z^{2^n}$.
\end{prop}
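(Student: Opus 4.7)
I would use the standard construction that embeds $\Z^{2^n}$ into $F$ by partitioning a single edge of the ``identity'' diagram into $2^n$ dyadic pieces via a binary tree and placing a copy of $x_0$ on each piece. View $F = \DG(\langle x\mid x^2=x\rangle, x)$, and let $T_n$ be the balanced binary caret tree, i.e.\ the $(x,x^{2^n})$-diagram built by iterating cells of the form $x \to x^2$; it has $2^n-1$ cells. For $i=1,\ldots,2^n$ put
$$g_i \;=\; T_n \,\circ\, \bigl(\varepsilon(x^{i-1}) + x_0 + \varepsilon(x^{2^n-i})\bigr) \,\circ\, T_n^{-1} \;\in\; F.$$
Since for $i\ne j$ the $x_0$-block of $g_i$ sits at a different leaf of $T_n$ than that of $g_j$, the elements $g_i, g_j$ commute, so the assignment $\xi_n(a_1,\ldots,a_{2^n}) = g_1^{a_1}\cdots g_{2^n}^{a_{2^n}}$ defines a homomorphism $\Z^{2^n}\to F$.

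For the upper bound, after eliminating dipoles at all leaves other than $i$, the reduced diagram of $g_i$ retains only the $n$ carets on the unique root-to-leaf-$i$ path on top, their $n$ inverses on the bottom, and the $O(1)$ cells of $x_0$, hence has $O(n)$ cells in all. By \lemref{bu}, $|g_i|_F \le Cn$ for an absolute constant $C$. The triangle inequality gives
$$|\xi_n(\vec a)|_F \;\le\; \sum_{i=1}^{2^n}|a_i|\,|g_i|_F \;\le\; Cn\,\|\vec a\|_1,$$
which is the desired right-hand inequality (absorbing any additive constants into $C_2$).

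For the lower bound I would analyze the reduced diagram $\Delta$ of $\xi_n(\vec a)$ directly. In the product $g_1^{a_1}\cdots g_{2^n}^{a_{2^n}}$ every adjacent pair $T_n^{-1}\circ T_n$ between consecutive factors is a dipole and can be cancelled; after that, further cancellations collapse all caret cells at leaves $i$ with $a_i=0$. What remains is a pruned subtree $T'$ of $T_n$ on top, its inverse $T'^{-1}$ on the bottom, and, at the $i$-th leaf for each $i$ in the support of $\vec a$, the reduced diagram of $x_0^{a_i}$. Since the reduced diagram of $x_0^{k}$ contains exactly $2|k|$ cells, $\Delta$ has at least $\sum_i 2|a_i| = 2\|\vec a\|_1$ cells. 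Applying \lemref{bu} in the reverse direction,
$$|\xi_n(\vec a)|_F \;\ge\; \tfrac{1}{6}\cdot 2\|\vec a\|_1 - 2 \;\ge\; \tfrac{1}{C_1 n}\|\vec a\|_1 - C_2$$
for any $C_1 \ge 3$ and $C_2 \ge 2$. This also yields injectivity of $\xi_n$, since $\Delta$ is nontrivial whenever $\vec a \ne 0$.

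The main technical obstacle is the non-cancellation claim used in the lower bound: one must verify that no hidden dipole reduction in the product diagram can consume cells of the $x_0^{a_i}$-blocks. This rests on two facts from the diagram-group calculus: uniqueness of the reduced form (Theorem~3.17 of \cite{GS1}), and the geometric observation that distinct $x_0^{a_i}$-blocks are confined to disjoint vertical strips separated from one another by the tree cells lying on the root-to-leaf paths of $T_n$, so no dipole can ever straddle two different blocks. Once this structural point is secured, both bounds are routine, and choosing $C_1, C_2$ large enough to accommodate both inequalities completes the proof.
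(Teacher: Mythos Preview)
Your proof is correct and follows essentially the same approach as the paper: both place commuting copies of $x_0$ at the $2^n$ leaves of a depth-$n$ binary tree, conjugate back to base $x$, and bound the word length of the product via a cell count and \lemref{bu}. The paper presents the construction recursively (adjoining one caret at a time, citing \cite{AGS}) rather than via the balanced tree $T_n$, and simply asserts the two-sided cell estimate $4\|\vec a\|_1 \le \#\text{cells}\le 2n+4\|\vec a\|_1$ where you supply the non-cancellation argument explicitly; one small slip is that the reduced diagram of $x_0^{k}$ has $2|k|+2$ cells rather than $2|k|$, but this only strengthens your lower bound.
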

\begin{proof}
We are going to use the following construction from \cite{AGS}. For any $n\ge0$, let us define $2^n$ elements of $F$
that commute pairwise. All these elements will be reduced $(x,x)$-diagrams over $\pp=\la x\mid x^2=x\ra$. For $n=0$,
let $\Delta$ be the diagram that corresponds to the generator $x_0$ (see above).  It has 4 cells.

Suppose that $n\ge1$ and we have already constructed diagrams $\Delta_i$ ($1\le i\le2^{n-1}$) that commute pairwise.
For every $i$ we consider two $(x^2,x^2)$-diagrams: $\ve(x)+\Delta_i$ and $\Delta_i+\ve(x)$. We get $2^n$ spherical
diagrams with base $x^2$ that obviously commute pairwise. It remains to conjugate them to obtain $2^n$ spherical
diagrams with base $x$ having the same property. Namely, we take $\pi\circ(\ve(x)+\Delta _i)\circ {\pi}^{-1}$ and
$\pi\circ(\Delta_i+\ve(x))\circ\pi ^{-1}$.

Let us denote the elements of $F$ obtained in this way by $g_i$ ($1\le i\le2^n$). It is easily proved, say, by
induction on $n$, that there exists a $(x^{2^n},x)$-diagram $u_n$ with $n$ cells and $(x^{2^n},x^{2^n})$-diagrams
$v_{n,i}=\ve(x^i)+\Delta+\ve(x^{2^n-i-1})$, $i=0,...,2^n-1$,  such that each $g_i$ is equal to $u_n\iv v_{n,i}u_n$.
Hence each $g_i$ has $2n+4$ cells and its word length in $F$ is bounded between $n/C$ and $Cn$ where $C$ is a constant.
Hence the subgroup $A_n$ generated by $g_1,...,g_{2^n}$ is isomorphic to $Z^{2^n}$.

Now if we consider the diagram $g_1^{k_1}...g_{2^n}^{k_{2^n}}$ for any integers $k_1,...,k_{2^n}$, the number of cells
in that diagram is between $4(|k_1|+...|k_{2^n}|)$ and $2n+4(|k_1|+...|k_{2^n}|)$. It follows from Lemma \ref{bu} that
the restriction of the word metric of $F$ on the subgroup $A_n$ is between $\frac1{C_1}|.|-C_2n$ and $C_1|.|+C_2n$
where $|.|$ is the standard $l_1$-metric on $Z^{2^n}$, $C_1, C_2$ are constants $>1$.
\end{proof}

\begin{rem} Note that the constants $C_1$ and $C_2$ in Lemma
\ref{embedding} do not exceed 25 and do not depend on $n$.
\end{rem}

\begin{thm}\label{t7}
There exists an exponential function $D$ such that the dimension growth of the Thompson group $F$ with control $D$ is
exponential.
\end{thm}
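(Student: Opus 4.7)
The plan is to transport the exponential lower bound for dimension growth of binary cubes (Corollary~\ref{mainL}) into $F$ via the quasi-isometric embeddings of $\Z^{2^n}$ furnished by Proposition~\ref{embedding}. Concretely, for each $n$, Proposition~\ref{embedding} gives a $(C_1 n, C_2)$-quasi-isometric group embedding
\[
\xi_n : (\Z^{2^n}, \|\cdot\|_1) \to F.
\]
Restricting $\xi_n$ to the binary cube $\{0,1\}^{2^n}\subset \Z^{2^n}$ gives a quasi-isometric embedding of $(\{0,1\}^{2^n},\ell_1)$ into $F$ with the same constants $c=c_1=C_1 n$, $r=r_1=C_2$.

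Applying Proposition~\ref{subgroup} to this restriction yields
\[
\di{(\lambda,D)}(F) \;\ge\; \di{\bigl(\tfrac{\lambda-C_2}{C_1 n},\; C_1 n(D+C_2)\bigr)}\bigl(\{0,1\}^{2^n},\ell_1\bigr).
\]
Next I would invoke Corollary~\ref{mainL}: whenever the first parameter is at least $4$ and the second parameter is at most $\sqrt{2^n}/4$, the right-hand side is at least $2^n$ (provided $2^n>64$). Thus the task reduces to choosing $n$ as a function of $\lambda$ and a control function $D(\lambda)$ so that both inequalities
\[
\frac{\lambda-C_2}{C_1 n}\ge 4, \qquad C_1 n(D(\lambda)+C_2)\le \frac{\sqrt{2^n}}{4}
\]
hold simultaneously.

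The natural choice is $n=n(\lambda):=\lfloor \alpha\lambda\rfloor$ for a sufficiently small constant $\alpha>0$ (depending on $C_1,C_2$), which makes the first inequality hold for all large $\lambda$. With that choice, the second inequality becomes
\[
D(\lambda)\;\le\; \frac{2^{n(\lambda)/2}}{4C_1 n(\lambda)}-C_2,
\]
and the right-hand side grows exponentially in $\lambda$. Taking $D(\lambda):=e^{\gamma\lambda}$ for a small enough $\gamma>0$ ensures this upper bound is satisfied for all sufficiently large $\lambda$. With this exponential control, we conclude $\di{(\lambda,D(\lambda))}(F)\ge 2^{n(\lambda)}\ge 2^{\alpha\lambda-1}$, which is exponential in $\lambda$.

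The only delicate point is the bookkeeping of constants: the distortion $C_1 n$ of $\xi_n$ grows linearly with $n$, so to push Corollary~\ref{mainL} through we must keep $n$ comparable to $\lambda$ rather than, say, $\log\lambda$; otherwise the rescaled first parameter $(\lambda-C_2)/(C_1 n)$ would escape to infinity while $2^n$ would fail to be exponential in $\lambda$. Once $n=\Theta(\lambda)$ is fixed, both inequalities align: the cluster-distance constraint becomes a linear constraint on $\lambda$, while the diameter constraint demands an exponential control function, which is exactly the conclusion of the theorem. No further structural input about $F$ is required beyond Proposition~\ref{embedding}.
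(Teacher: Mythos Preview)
Your proposal is correct and follows essentially the same route as the paper: embed $\Z^{2^n}$ into $F$ via Proposition~\ref{embedding}, push the parameters through Proposition~\ref{subgroup}, and invoke Corollary~\ref{mainL} with $n$ chosen linearly in $\lambda$ so that both constraints are met for an exponential control $D$. The only cosmetic differences are that the paper writes the intermediate space as $\bigoplus_{i=1}^{2^n}\Z$ rather than explicitly restricting to $\{0,1\}^{2^n}$, and it sets $n=(\lambda-C_2)/(5C_1)$ (using the threshold $5$ rather than $4$); the substance is identical.
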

\proof Let $A_n=\xi_n\left(\Z^{2^n}\right)$. In view of
Lemma~\ref{embedding}, Lemma~\ref{subgroup}, and Corollary~\ref{mainL} we obtain $$
\di{\left(\lambda,e^{\alpha\lambda}\right)}\left(F\right)\ge\di{\left(\lambda,e^{\alpha\lambda}\right)}\left(A_n\right)\ge
\di{\left(\frac{\lambda-C_2}{C_1n},C_1n\left(e^{\alpha\lambda}+C_2\right)\right)}\bigoplus_{i=1}^{2^n}\Z=2^n$$
provided
$\frac{\lambda-C_2}{C_1n}\ge 5$  and $C_1n\left(e^{\alpha\lambda}+C_2\right)<2^{\frac{n}{2}-2}$. This holds for
$n=\frac{\lambda-C_2}{5C_1}$ and some $\alpha$.

\endproof

\begin{remark} \label{r:56} It is not known whether the dimension growth of $F$ (with no control) is exponential.
It seems that  the embedding of $\Z^{2^n}$ into $F$ described here has almost the smallest possible quasi-isometry
constants. It does not look like similarly distorted embeddings of $F^{2^n}$ or even $(F\wr Z)^{2^n}$ into $F$ (which
can be defined as above) help proving that the dimension growth of $F$ is exponential. Whether there are less distorted
copies of $\Z^{k}$ or $F^k$ inside $F$ is an open problem. On the other hand, if $F$ has in fact a subexponential
dimension growth, then from Ozawa \cite{Ozawa}, it would follow that $F$  has Guoliang Yu's property A. This would
solve a very difficult open problem (see the Introduction).

Note that if for some $\lambda_0>0$ and $\alpha>0$, $\di{\lambda_0}(\Z^k)\ge k^\alpha$ for every $k$ (see Question
\ref{i}), then the dimension growth of $F$ is exponential. Indeed, in that case the $(C_1n,C_2)$-quasi-isometric
embedding of $\Z^{2^n}$ into $F$ described above gives by Lemmas \ref{shift} and \ref{subgroup} the following
inequalities (for some $C_3>0$):

$$
\di{\left(\lambda_0C_1n+C_2\right)}\left(F\right)\ge
\di{\lambda_0}\left(\Z^{2^n}\right)\ge C_3\left(2^n\right)^\alpha=C_32^{\alpha n}$$ for every $n\ge 1$.
Hence $$\di{n}\left(F\right)\ge 2^{C_4n}$$ for some $C_4>0$ and all $n\ge 1$.
\end{remark}

\section{Upper bounds. The Kolmogorov-Ostrand dimension growth}\label{above}

In order to estimate dimension growth from above, we will use another function which can be traced back to the work of Kolmogorov and Ostrand on Hilbert's
13-th problem~\cite{K,Ost}. Let $X$ be a metric space. Consider colorings of $X$ in where every point can be colored in several colors. Let $\lambda>0$. The definition of monochromatic $\lambda$-clusters remains the same.

\begin{defin}\label{DefKO}
Let $D\colon \N\times\N\to \N$ be a function which is  non-decreasing in each variable.
For every $\lambda>0$, we say that the Kolmogorov-Ostrand (KO) $\lambda$-dimension of $X$ does not exceed $n$  with control $D$ if for every $m\ge 0$ there exists a coloring of $X$ in $m+n$ colors such that every $\lambda$-cluster has diameter at most $D\left(m\right)$ and every point is colored in at least $m+1$ colors. The smallest such $n$ is called the KO $\lambda$-{\em dimension} of $X$ with control $D\left(m,\lambda\right)$ (written as $\diko{\left(\lambda,D\right)}\left(X\right)$).
If the diameters of clusters are uniformly bounded and a control function is not specified, one obtain the notion of dimension $\diko{\lambda}\left(X\right)$.
\end{defin}

\begin{ex}\label{example} Let $D\left(m,\lambda\right)=2\left(m+1\right)\left(\lambda+1\right)$. Then $$\diko{\left(\lambda,D\right)}\left(\R\right)\le 2.$$
\end{ex}
\begin{proof} Indeed, let $m\ge 0$. We color every $x\in \R$ in color $i\in \{0,...,m+1\}$ if $$\left\lfloor \frac{x}{\lambda+1}\right\rfloor\not\equiv 2i+1 \mod 2m+2.$$ Then every point $x$ is colored in $m+1$ colors if $\lfloor \frac{x}{\lambda+1}\rfloor$ is odd and is colored in all $m+2$ colors otherwise. Every $\lambda$-cluster of every color is an interval of size $\left(2m+1\right)\left(\lambda+1\right)< 2\left(m+1\right)\left(\lambda+1\right)$.
\end{proof}

\begin{prop}\label{connection} For every metric space $X$,
$$\di{\lambda}\left(X\right)\le\diko{\lambda}\left(X\right)-1.$$
For every control function $D:\N\times\N\to \N$ which is non-decreasing in each variable  $$\di{\left(\lambda,D\left(0,\lambda\right)\right)}\left(X\right)\le \diko{\left(\lambda,D\right)}\left(X\right)-1.$$
\end{prop}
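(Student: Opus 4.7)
The two inequalities really have the same content, so I would prove the second (controlled) statement and then the first follows by taking the infimum over control functions.

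The plan is to take the witness coloring for the KO dimension with the smallest possible value of the parameter $m$, namely $m=0$. Suppose $\diko{(\lambda,D)}(X)\le n$. By \defref{DefKO}, applied with $m=0$, there exists a coloring of $X$ using $0+n=n$ colors such that every $\lambda$-cluster has diameter at most $D(0,\lambda)$ and every point is colored in at least $0+1=1$ color. This is almost what we want for $\di{(\lambda,D(0,\lambda))}(X)$, except that the KO definition allows a point to carry several colors, whereas the usual dimension function is phrased (and conventionally used) with each point carrying exactly one.

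That gap is exactly what \remref{r:90} is designed to close. By the remark, we may arbitrarily select, for each point $x\in X$ that carries several colors, one of its colors as ``the'' color of $x$. The resulting one-color-per-point coloring uses the same $n$ colors. Crucially, each monochromatic $\lambda$-cluster in the new coloring is contained in the corresponding monochromatic $\lambda$-cluster of the original multi-coloring, so its diameter is still bounded by $D(0,\lambda)$. Hence $X$ is $(\lambda,D(0,\lambda))$-colored in $n=(n-1)+1$ colors, which by definition means
$$\di{(\lambda,D(0,\lambda))}(X)\le n-1=\diko{(\lambda,D)}(X)-1.$$

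Taking the infimum of the left-hand side over all control functions $D$ (and, correspondingly, allowing the uniform bound on cluster diameters on the right-hand side to depend on the coloring) yields the uncontrolled statement $\di{\lambda}(X)\le \diko{\lambda}(X)-1$. No step here is technically difficult; the only thing to watch is the bookkeeping of the ``+1'' shift between the two conventions (a coloring in $k$ colors corresponds to $\di{}\le k-1$ but to $\diko{}\le k$ when $m=0$), which is precisely what accounts for the $-1$ on the right-hand side.
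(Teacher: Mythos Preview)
Your proof is correct and follows the same approach as the paper's own argument: set $m=0$ in the KO definition to obtain a coloring in $n$ colors with $\lambda$-clusters of diameter at most $D(0,\lambda)$, and read off $\di{(\lambda,D(0,\lambda))}(X)\le n-1$. Your version is simply more explicit than the paper's (in particular, you spell out the appeal to \remref{r:90} and the $-1$ bookkeeping), but the idea is identical.
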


\proof Indeed, let $\diko{\left(\lambda,D\right)}\left(X\right)\le n\left(\lambda\right)$ for some fixed $\lambda$. Take $m=0$. Then there exists a coloring of $X$ in $n\left(\lambda\right)$ colors such that every $\lambda$-cluster has diameter at most $D\left(0,\lambda\right)$. Then $\di{\left(\lambda,D\left(0,\lambda\right)\right)}\left(X\right)\le n\left(\lambda\right)-1$.\endproof

We always assume that the product of two metric spaces $X\times Y$ is supplied with the $\ell_1$-metric.

\begin{prop}\label{KO-product}
Suppose that $$\diko{\left(\lambda,D_X\right)}\left(X\right)\le n_X\left(\lambda\right)\ \ \text{and}\ \ \diko{\left(\lambda,D_Y\right)}\left(Y\right)\le n_Y\left(\lambda\right).$$ Then $$\diko{\left(\lambda,D\right)}\left(X\times Y\right)\le n\left(\lambda\right)=n_X\left(\lambda\right)+n_Y\left(\lambda\right)-1$$
with $D\left(m,\lambda\right)=D_X\left(m+n_Y\left(\lambda\right),\lambda\right)+D_Y\left(m+n_X\left(\lambda\right),\lambda\right)$.
\end{prop}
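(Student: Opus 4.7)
The strategy is to color $X\times Y$ by intersecting multi-colorings pulled back from the two factors, using a single palette shared between them. Fix $m\ge 0$ and let $N:=m+n_X(\lambda)+n_Y(\lambda)-1$; I will produce a multi-coloring of $X\times Y$ in $N$ colors. Invoke the KO-hypothesis on $X$ with the shifted parameter $m_X:=m+n_Y(\lambda)-1$ to get a multi-coloring $c_X:X\to 2^{[N]}$ that uses exactly $m_X+n_X(\lambda)=N$ colors, assigns to each $x$ at least $m_X+1=m+n_Y(\lambda)$ of them, and whose monochromatic $\lambda$-clusters have diameter at most $D_X(m_X,\lambda)$. Apply the same recipe to $Y$ with $m_Y:=m+n_X(\lambda)-1$ and the same palette, producing $c_Y:Y\to 2^{[N]}$. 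Now declare $(x,y)$ to be of color $c$ precisely when $c\in c_X(x)\cap c_Y(y)$.

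The ``at least $m+1$ colors per point'' condition falls out of inclusion--exclusion inside the $N$-element palette:
$$|c_X(x)\cap c_Y(y)|\ \ge\ |c_X(x)|+|c_Y(y)|-N\ \ge\ (m+n_Y)+(m+n_X)-(m+n_X+n_Y-1)\ =\ m+1.$$

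For the diameter bound, note that the $c$-colored subset of $X\times Y$ is the rectangle $X_c\times Y_c$, where $X_c:=\{x\mid c\in c_X(x)\}$ and $Y_c$ is defined analogously. Because $X\times Y$ carries the $\ell_1$-metric, each coordinate distance is bounded by the total distance, so coordinate projections send $\lambda$-paths to $\lambda$-paths. Hence every $\lambda$-cluster of color $c$ in $X\times Y$ sits inside some product $K_X\times K_Y$ where $K_X$ is a $\lambda$-cluster of color $c$ in $X_c$ and $K_Y$ is a $\lambda$-cluster of color $c$ in $Y_c$. Its $\ell_1$-diameter is then
$$\diam(K_X)+\diam(K_Y)\ \le\ D_X(m+n_Y-1,\lambda)+D_Y(m+n_X-1,\lambda)\ \le\ D(m,\lambda),$$
using monotonicity of $D_X$ and $D_Y$ in the first argument.

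The main delicate point is this projection step: one must verify that the image of a maximal $\lambda$-connected subset of $X_c\times Y_c$ lies inside a single $\lambda$-cluster of $X_c$ (and a single one of $Y_c$), rather than spanning several of them. This is immediate from the $\ell_1$ structure, but it is the only place where the product-metric convention enters essentially; the rest is bookkeeping with the shifted KO parameters.
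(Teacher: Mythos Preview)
Your proof is correct and follows essentially the same approach as the paper: color $X\times Y$ by declaring $(x,y)$ to have color $i$ when both $x\in U^{(i)}$ and $y\in V^{(i)}$, use inclusion--exclusion in the common palette to get $\ge m+1$ colors per point, and use the $\ell_1$-projection argument to bound cluster diameters. Your bookkeeping with $m_X=m+n_Y-1$ and $m_Y=m+n_X-1$ is in fact a bit tighter than the paper's (which is slightly loose by one in a way that monotonicity absorbs), and your formulation that a product cluster \emph{sits inside} $K_X\times K_Y$ is the precise statement needed.
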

\begin{proof} Fix $\lambda>0$ and $m\in \N$. By assumption there exists a coloring $X=\cup_{i=1}^{m+n\left(\lambda\right)} U^{\left(i\right)}$ (resp. $Y=\cup_{i=1}^{m+n\left(\lambda\right)}V^{\left(i\right)}$) where the diameter of every $\lambda$-cluster does not exceed $D_X\left(m+n_Y\left(\lambda\right),\lambda\right)$ (resp. $D_Y\left(m+n_X\left(\lambda\right),\lambda\right)$) and every point is colored in $m+n_Y\left(\lambda\right)+1$ (resp. $m+n_X\left(\lambda\right)+1$) colors. Consider the following coloring of $X\times Y$ in $m+n\left(\lambda\right)$ colors:
$$X\times Y=\cup \left(U^{\left(i\right)}\times V^{\left(i\right)}\right)$$
(i.e. we color each $U^{\left(i\right)}\times V^{\left(i\right)}$ in color $i$). Note that every $\lambda$-cluster of that coloring is the direct product of a cluster $U$ of $U^{\left(i\right)}$ and a cluster $V$ of $V^{\left(i\right)}$ for some $i$.
Since $\diam\left(U\times V\right)=\diam U+ \diam V$,
the diameter of every $\lambda$-cluster is at most $$D_X\left(m+n_Y\left(\lambda\right),\lambda\right)+D_Y\left(m+n_X\left(\lambda\right),\lambda\right).$$

Now pick any point $\left(x,y\right)\in X\times Y$. Note that by our assumption, $x$ is colored by some colors $i\in I$, $|I|\ge m+n_Y\left(\lambda\right)+1$ and $y$ is colored by some colors $j\in J$, $|J|\ge m+n_X\left(\lambda\right)+1$. Since $\left(m+n_X\left(\lambda\right)+1\right)+\left(m+n_Y\left(\lambda\right)+1\right)=m+1+\left(m+n\left(\lambda\right)\right)$, the intersection of $I$ and $J$ has at least $m+1$ numbers. Hence $\left(x,y\right)$ is colored in at least $m+1$ colors as required.
\end{proof}

\begin{prop}\label{KO-power} Suppose that $\diko{\left(\lambda,D\right)}\left(X\right)\le n\left(\lambda\right)$. Then for every $k\in\N$,
$$
\diko{\left(\lambda,D_k\right)}\left(X^k\right)\le kn\left(\lambda\right)-k+1$$
where $D_k\left(m,\lambda\right)=kD\left(m+\left(k-1\right)n\left(\lambda\right),\lambda\right)$.
\end{prop}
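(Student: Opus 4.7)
The plan is to prove this by induction on $k$, using Proposition~\ref{KO-product} as the engine. The base case $k=1$ is immediate: $D_1\left(m,\lambda\right)=D\left(m,\lambda\right)$, so the hypothesis is exactly the desired conclusion.

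For the inductive step, assume the bound $\diko{\left(\lambda,D_k\right)}\left(X^k\right)\le kn\left(\lambda\right)-k+1$. Regard $X^{k+1}$ as $X^k\times X$ and apply Proposition~\ref{KO-product} with $n_X\left(\lambda\right):=kn\left(\lambda\right)-k+1$ and $n_Y\left(\lambda\right):=n\left(\lambda\right)$. This yields
$$
\diko{\left(\lambda,\tilde D\right)}\left(X^{k+1}\right)\le \left(kn\left(\lambda\right)-k+1\right)+n\left(\lambda\right)-1=\left(k+1\right)n\left(\lambda\right)-k,
$$
which is precisely $\left(k+1\right)n\left(\lambda\right)-\left(k+1\right)+1$, so the dimension count is correct. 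The only remaining task is to show that the resulting control function $\tilde D$ is bounded above by $D_{k+1}$.

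By Proposition~\ref{KO-product}, the new control is
$$
\tilde D\left(m,\lambda\right)=D_k\left(m+n\left(\lambda\right),\lambda\right)+D\left(m+kn\left(\lambda\right)-k+1,\lambda\right).
$$
Unfolding the first summand using the induction hypothesis gives $D_k\left(m+n\left(\lambda\right),\lambda\right)=kD\left(m+kn\left(\lambda\right),\lambda\right)$. Since $D$ is non-decreasing in its first argument and $m+kn\left(\lambda\right)-k+1\le m+kn\left(\lambda\right)$ (using $k\ge 1$), the second summand is at most $D\left(m+kn\left(\lambda\right),\lambda\right)$. Therefore
$$
\tilde D\left(m,\lambda\right)\le \left(k+1\right)D\left(m+kn\left(\lambda\right),\lambda\right)=D_{k+1}\left(m,\lambda\right),
$$
which completes the induction.

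No real obstacle arises; the argument is purely bookkeeping of the recursive formula. The only mildly delicate point is verifying the monotonicity bound $D\left(m+kn\left(\lambda\right)-k+1,\lambda\right)\le D\left(m+kn\left(\lambda\right),\lambda\right)$, which relies on the hypothesis that $D$ is non-decreasing in the first variable (this is the reason that hypothesis was built into Definition~\ref{DefKO}).
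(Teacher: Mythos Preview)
Your proof is correct and follows essentially the same approach as the paper: induction on $k$, splitting $X^{k+1}=X^k\times X$ and invoking Lemma~\ref{KO-product}, then checking that the resulting control function is dominated by $D_{k+1}$ via monotonicity of $D$. The only difference is cosmetic---you step from $k$ to $k+1$ whereas the paper steps from $k-1$ to $k$---and your explicit mention of why monotonicity is needed is a slight improvement in exposition.
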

\begin{proof}
Induction on $k$. It is true formula for $k=1$.

Assume that $$\diko{\left(\lambda,D_{k-1}\right)}\left(X^{k-1}\right)\le \left(k-1\right)n\left(\lambda\right)$$
for $D_{k-1}\left(m,\lambda\right)=\left(k-1\right)D\left(m+\left(k-2\right)n\left(\lambda\right),\lambda\right)$.
We apply Lemma~\ref{KO-product} to $X^{k-1}\times X$ to obtain
$$\diko{\left(\lambda,D\right)}\left(X^k\right)\le
n\left(\lambda\right)=\left(k-1\right)n\left(\lambda\right)-\left(k-1\right)+1+n\left(\lambda\right)-1
=kn\left(\lambda\right)-k+1$$
with $$D\left(m,\lambda\right)=D_{k-1}\left(m+n\left(\lambda\right)y,\lambda\right)+D\left(m+\left(k-1\right)n\left(\lambda\right)-k+1,\lambda\right)$$
$$=\left(k-1\right)D\left(m+n\left(\lambda\right)+\left(k-2\right)n\left(\lambda\right),\lambda\right)+D\left(m+\left(k-1\right)n\left(\lambda\right)-k+1,\lambda\right)$$
$$=\left(k-1\right)D\left(m+\left(k-1\right)n\left(\lambda\right),\lambda\right)+D\left(m+\left(k-1\right)n\left(\lambda\right)-k+1,\lambda\right)$$
$$\le kD\left(m+\left(k-1\right)n\lambda,\lambda\right)=D_k\left(m,\lambda\right).$$
\end{proof}
\begin{cor}\label{AN-Z}
$\di{\left(\lambda,\left(4n^2-2n\right)\left(\lambda+1\right)\right)}\left(\Z^n\right)\le n$.
\end{cor}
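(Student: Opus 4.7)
The plan is to assemble Corollary~\ref{AN-Z} directly from three earlier results: Example~\ref{example}, Proposition~\ref{KO-power}, and Proposition~\ref{connection}. First, I would observe that $\Z$, viewed as a subspace of $\R$ with the inherited metric, inherits the coloring described in Example~\ref{example}; hence
$$\diko{(\lambda, D)}(\Z) \le 2,\qquad D(m,\lambda)=2(m+1)(\lambda+1).$$

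Next I would apply Proposition~\ref{KO-power} to the factor space $X=\Z$ with $n(\lambda)=2$ and $k=n$. This yields
$$\diko{(\lambda, D_n)}(\Z^n) \le 2n - n + 1 = n+1,$$
where, unwinding the recursion in the statement of Proposition~\ref{KO-power},
$$D_n(m,\lambda) = n\,D\bigl(m + (n-1)\cdot 2,\;\lambda\bigr) = 2n\bigl(m + 2n - 1\bigr)(\lambda+1).$$
Setting $m=0$ gives $D_n(0,\lambda) = 2n(2n-1)(\lambda+1) = (4n^2 - 2n)(\lambda+1)$.

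Finally, Proposition~\ref{connection} translates this KO-dimension bound into a bound on the ordinary dimension function by dropping one color:
$$\di{(\lambda, D_n(0,\lambda))}(\Z^n) \le \diko{(\lambda,D_n)}(\Z^n) - 1 \le n,$$
which is precisely the claimed inequality. There is no real obstacle — all three ingredients have already been proved, and the only thing to verify is that the arithmetic of the control function matches $(4n^2-2n)(\lambda+1)$ when the slack parameter $m$ is set to $0$ at the last step.
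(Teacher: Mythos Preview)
Your proposal is correct and follows essentially the same route as the paper: apply Proposition~\ref{KO-power} to Example~\ref{example} with $n(\lambda)=2$ and $k=n$, then invoke Proposition~\ref{connection} at $m=0$ and check that $D_n(0,\lambda)=(4n^2-2n)(\lambda+1)$. Your extra remark that $\Z$ inherits the coloring of Example~\ref{example} from $\R$ is a small clarification the paper leaves implicit, but the argument is otherwise identical.
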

\begin{proof}
We apply this proposition to Example~\ref{example} where $n\left(\lambda\right)=2$ and $$D\left(m,\lambda\right) = 2\left(m+1\right)\left(\lambda+1\right)$$ to obtain
$\diko{\left(\lambda,D_n\right)}\left(\Z^n\right)\le 2n-n+1=n+1$ for $D_n\left(m,\lambda\right)$. By Lemma \ref{connection}
$$\di{\left(\lambda,D_n\left(0,\lambda\right)\right)}\left(\Z^n\right)\le n.$$ Note that
$$D_n\left(0,\lambda\right)=nD\left(2\left(n-1\right),\lambda\right)=2n\left(2n-1\right)\left(\lambda+1\right)=\left(4n^2-2n\right)\left(\lambda+1\right).$$
\end{proof}
This Corollary shows that the constant $c$ in the definition of the Assouad-Nagata dimension of $\Z^n$ has a quadratic in $n$ upper bound.
Will Sawin \cite{Saw} improved this constant to  $\frac{\left(n-1\right)\left(n+2\right)}{2}$, still quadratic.

We recall that the wreath product $B\wr A$ is a semi-direct product of the restricted power $B^{\left(A\right)}$ and $A$.
In case of fixed generating sets for $A$ and $B$ the group $B\wr A$ is equipped with a natural metric $d_{B\wr A}$ described in Theorem~\ref{th:Parry}
Let $\rho$ denote the restriction of this metric to $B^{\left(A\right)}$.

We take the $\ell_1$ metric on the product of metric spaces.
\begin{prop}\label{QI}
The {metric spaces} $\left(B\wr A, d_{B\wr A}\right)$ and $\left(B^{\left(A\right)},\rho\right)\times A$ are quasi-isometric.
\end{prop}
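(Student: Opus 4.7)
The plan is to combine Parry's formula (Theorem~\ref{th:Parry}) with an elementary travelling-salesman comparison on the Cayley graph of $A$ to show that the natural set-theoretic bijection $B\wr A\to B^{(A)}\times A$, $fa\mapsto(f,a)$, is bi-Lipschitz (hence a quasi-isometry). By Parry, both distances are expressible in terms of the same two ingredients: a \emph{tour cost} on the Cayley graph of $A$ (the length of a shortest path visiting a prescribed finite set, with prescribed endpoints) and a \emph{lamp cost} $\sum_x|h(x)|_B$, where $h:=f_1^{-1}f_2\in B^{(A)}$.

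For a finite $S\subset A$ and $u,v\in A$, let $p(S,u,v)$ denote the length of a shortest path in the Cayley graph of $A$ from $u$ to $v$ visiting every point of $S$, and set $T(S):=p(S,1,1)$. Expanding the semidirect-product normal form $(f_1a_1)^{-1}(f_2a_2)=(a_1^{-1}\cdot h)(a_1^{-1}a_2)$ and using left-invariance of $d_A$ together with Parry's formula, one obtains
$$d_{B\wr A}((f_1,a_1),(f_2,a_2))=p(\supp(h),a_1,a_2)+\sum_x|h(x)|_B,$$
$$\rho(f_1,f_2)+d_A(a_1,a_2)=T(\supp(h))+d_A(a_1,a_2)+\sum_x|h(x)|_B.$$
The lamp cost cancels in any comparison, so the proof reduces to the purely geometric statement that $p(S,a_1,a_2)\asymp T(S)+d_A(a_1,a_2)$, uniformly in the triple $(S,a_1,a_2)$.

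For the lower bound, $p(S,a_1,a_2)\ge d_A(a_1,a_2)$ holds trivially, and appending a geodesic from $a_2$ back to $a_1$ to any near-optimal path realising $p(S,a_1,a_2)$ produces a closed tour based at $a_1$ of length at most $2p(S,a_1,a_2)$, which after left-translation by $a_1^{-1}$ controls $T(S)$. For the upper bound, concatenating an optimal closed tour of $S$ based at $1$ with geodesics $a_1\to 1$ and $1\to a_2$ yields $p(S,a_1,a_2)\le T(S)+d_A(1,a_1)+d_A(1,a_2)$. The anticipated main obstacle is that this naive upper bound involves $d_A(1,a_i)$ rather than $d_A(a_1,a_2)$, and these can differ by an arbitrary amount when $\supp(h)$ lies far from $\{a_1,a_2\}$. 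The cleanest fix is to work instead with the twisted bijection $\Phi(fa):=(a^{-1}\cdot f,a)$, which re-centres the lamp configuration at the lamplighter's current position; then Parry's formula places the travelling-salesman base point automatically at the correct location, and the same TSP comparison yields the quasi-isometry with absolute constants depending only on the generating sets.
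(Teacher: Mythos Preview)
Your approach goes further than the paper's: the paper records only the norm comparison $\|\bar b a\|_{B\wr A}\le\|\bar b\|_\rho+\|a\|_A\le 3\|\bar b a\|_{B\wr A}$ (which is exactly your TSP comparison in the special case $a_1=e$, $a_2=a$) and stops there. You are right to worry that a norm comparison alone does not obviously yield a bi-Lipschitz bijection, since $fa\mapsto(f,a)$ is not a group homomorphism between the semidirect and direct products. However, your attempt to close this gap fails on both sides.

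Your ``lower bound'' step is wrong: left-translating the closed tour based at $a_1$ by $a_1^{-1}$ yields a closed tour based at $e$ visiting $a_1^{-1}S$, not $S$, so you only control $T(a_1^{-1}S)$. The inequality $T(S)+d_A(a_1,a_2)\le C\,p(S,a_1,a_2)$ you want is actually false: for $A=B=\Z$, $S=\{n\}$, $a_1=a_2=n$ one gets $p=0$ but $T(S)=2n$. Concretely, $f_1=0$, $f_2=\delta_n$, $a_1=a_2=n$ gives $d_{B\wr A}(g_1,g_2)=1$ while $\rho(f_1,f_2)+d_A(a_1,a_2)=2n+1$; so the untwisted bijection is not a quasi-isometry. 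Your proposed fix via $\Phi(fa)=(a^{-1}\cdot f,a)$ does not work either: take $f_1=f_2=\delta_n$, $a_1=0$, $a_2=1$. Then $h=f_1^{-1}f_2=0$, so $d_{B\wr A}(g_1,g_2)=1$, but $\Phi(g_1)=(\delta_n,0)$, $\Phi(g_2)=(\delta_{n-1},1)$ and $\rho(\delta_n,\delta_{n-1})=T(\{n-1,n\})+2=2n+2$, giving product distance $2n+3$. The point is that when $a_1\ne a_2$ the two configurations are shifted by \emph{different} amounts, so $(a_1^{-1}\cdot f_1)^{-1}(a_2^{-1}\cdot f_2)$ is not a shift of $h$ and its support can sit arbitrarily far from $e$ even when $h=0$. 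Thus neither bijection realises the quasi-isometry, and your plan as written does not prove the proposition; the paper's argument is content with the norm comparison and does not address this issue.
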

\begin{proof} Given $a\in A$ and a finite set $a_1,\dots a_n\in A$, we consider a shortest path $p$
in $A$ (to be precise - in the Cayley graph of $A$) from $e$ to $a$ that visits all $a_i$s and a shortest loop
 $\omega$ in $A$ based at $e$ that visits all $a_i$s. Let $|p|$ and $|\omega|$ denote the lengths of $p$ and $\omega$ respectively.
We note that $|p|\le|\omega|+\|a\|_A$, $|\omega|\le 2|p$, and $\|a\|_A\le|p|$. Therefore, $|\omega|+\|a\|_A\le 3|p|$ .

Then by Parry's theorem  for $\bar ba\in B^{\left(A\right)}A= B\wr A$, $\bar b=b_1^{a_1}\dots b_n^{a_n}\in B^{\left(A\right)}$, $a,a_i\in A$, $i=1,\dots n$
$$\|\bar ba\|_{B\wr A}=|p|+\sum_{i=1}^n\|b_i||_B\ \ \text{and}\ \ \ \|\bar b\|_{B\wr A}=|\omega|+\sum_{i=1}^n\|b_i||_B.$$
Thus,
$$
\|\bar ba\|_{B\wr A}\le\|\bar b\|_{B\wr A}+\|a\|_A\le 3\|\bar ba\|_{B\wr A}.
$$
\end{proof}

{The proof of the following statement is the same as that for Lemma~\ref{subgroup}.}
\begin{prop}\label{p:subKO}
Let $\phi:X\to Y$ be a quasi-isometric embedding:
$$\frac{1}{c_1}d_X\left(x,x'\right)-r_1\le d_Y\left(\phi\left(x\right),\phi\left(x'\right)\right)\le cd_X\left(x,x'\right)+r $$ for
all $x,x'\in X$ where $c,c_1\ge 1, r,r_1\ge 0$. Then for $\lambda>r$,
$$
\diko{\left(\lambda,D\right)}(Y)\ge\diko{\left(\frac{\lambda-r}{c},c_1\left(D+r_1\right)\right)}(X).
$$
\end{prop}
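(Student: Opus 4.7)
The plan is to follow the same pull-back strategy as in Lemma~\ref{p:6} and Lemma~\ref{subgroup}, but now keeping track of the extra KO data (multiplicity of colors at each point) in addition to the diameters of clusters. Fix $\lambda>r$ and write $n=\diko{(\lambda,D)}(Y)$; we shall show that for every $m\ge 0$ a witnessing coloring of $Y$ in $m+n$ colors yields a coloring of $X$ in $m+n$ colors which witnesses the desired inequality for $X$.

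\textbf{Construction of the pull-back coloring.} Given a KO coloring $c\colon Y\to\{1,\dots,m+n\}$ (where $c(y)\subseteq\{1,\dots,m+n\}$ is the set of colors assigned to $y$) such that every $\lambda$-cluster in $Y$ has diameter at most $D(m,\lambda)$ and $|c(y)|\ge m+1$ for every $y\in Y$, define the coloring of $X$ by $c'(x)=c(\phi(x))$. Trivially $|c'(x)|=|c(\phi(x))|\ge m+1$ for every $x\in X$, and only $m+n$ colors are used.

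\textbf{Estimating cluster diameters.} Using $\lambda>r$, if $x,x'\in X$ satisfy $d_X(x,x')\le (\lambda-r)/c$ then
$$d_Y(\phi(x),\phi(x'))\le c\,d_X(x,x')+r\le\lambda.$$
Consequently, any $((\lambda-r)/c)$-path of color $i$ in $X$ maps under $\phi$ to a $\lambda$-path of color $i$ in $Y$. Therefore every $((\lambda-r)/c)$-cluster $C\subseteq X$ of color $i$ satisfies $\phi(C)\subseteq C^\ast$ for some $\lambda$-cluster $C^\ast$ of color $i$ in $Y$, and $\diam_Y(C^\ast)\le D(m,\lambda)$. From the lower quasi-isometry bound, for any $x,x'\in C$,
$$d_X(x,x')\le c_1\bigl(d_Y(\phi(x),\phi(x'))+r_1\bigr)\le c_1\bigl(D(m,\lambda)+r_1\bigr),$$
so $\diam_X(C)\le c_1(D(m,\lambda)+r_1)$.

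\textbf{Conclusion.} The coloring $c'$ is a witness for $\diko{((\lambda-r)/c,\,c_1(D+r_1))}(X)\le n$, where we interpret $c_1(D+r_1)$ as the control function $(m,\mu)\mapsto c_1(D(m,\mu)+r_1)$. Since this holds for each $m\ge 0$, the desired inequality follows. There is no real obstacle here; the only point requiring care is to ensure that distances $0<d_X(x,x')\le(\lambda-r)/c$ are preserved in the right direction, which is why the hypothesis $\lambda>r$ appears (and also guarantees $(\lambda-r)/c\ge 0$). Degenerate fibers of $\phi$, if any, have $d_X$-diameter at most $c_1 r_1$ and are harmlessly absorbed into the diameter bound $c_1(D+r_1)$.
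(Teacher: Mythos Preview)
Your proof is correct and follows exactly the approach the paper intends: the paper simply states that the argument is the same as for Lemma~\ref{subgroup}, i.e., pull back the coloring via $\phi$ and check cluster diameters, with the only addition being that color multiplicities $|c(\phi(x))|\ge m+1$ are trivially preserved under pull-back. One small notational point: your interpretation of $c_1(D+r_1)$ as $(m,\mu)\mapsto c_1(D(m,\mu)+r_1)$ does not quite match the bound you actually establish, which is $c_1(D(m,\lambda)+r_1)$; the paper's intended reading (consistent with Lemma~\ref{subgroup}) is that $D$ is a function of $m$ alone with $\lambda$ held fixed, and with that reading your argument goes through verbatim.
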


\begin{prop}\label{Lipschitz iso}
Let $\phi:G'\to G$ be a 1-Lipschitz homomorphism of  finitely generated groups. Then
$\diko{\lambda}\left(G'\right)\le\diko{\lambda}\left(G\right)$.

If additionally for all $x\in G'$,  $\phi$ satisfies the condition
$\|x\|_{G'}\le\|\phi(x)\|_G+r$.
Then $\diko{\left(\lambda,D\right)}\left(G'\right)\le\diko{\left(\lambda, D+r\right)}\left(G\right)$.
\end{prop}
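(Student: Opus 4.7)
The plan is to pull back a good Kolmogorov--Ostrand coloring of $G$ to one on $G'$ by precomposition with $\phi$. Fix $\lambda>0$ and $m\ge 0$, and suppose $\diko{(\lambda,D+r)}(G)\le n$ is witnessed by a multi-valued coloring $c$ of $G$ in $m+n$ colors in which every point lies in at least $m+1$ colors and every monochromatic $\lambda$-cluster $C_G\subseteq G$ has diameter at most $D(m,\lambda)+r$. Define $c'(x):=c(\phi(x))$ for $x\in G'$. Then $c'$ uses the same $m+n$ colors, and $|c'(x)|=|c(\phi(x))|\ge m+1$ yields the multiplicity requirement. Because $\phi$ is $1$-Lipschitz, any $\lambda$-step in $G'$ maps to a $\lambda$-step in $G$ with the same endpoint colors, so every monochromatic $\lambda$-cluster $C$ in $(G',c')$ maps into some monochromatic $\lambda$-cluster $C_G$ in $(G,c)$.

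For the controlled (second) part, the key step is converting a bound on $\diam_G(C_G)$ into a bound on $\diam_{G'}(C)$. Applying the additional hypothesis to the element $z=x^{-1}y$ for arbitrary $x,y\in C$ and using that $\phi$ is a homomorphism gives
\[
d_{G'}(x,y)=\|x^{-1}y\|_{G'}\le \|\phi(x^{-1}y)\|_G+r=d_G(\phi(x),\phi(y))+r.
\]
Taking the supremum over $x,y\in C$ yields $\diam_{G'}(C)\le \diam_G(C_G)+r$; combining this with the hypothesis on $\diam_G(C_G)$ and reindexing the control function produces the stated inequality $\diko{(\lambda,D)}(G')\le \diko{(\lambda,D+r)}(G)$. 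For the first (uncontrolled) statement the same pullback construction works; one only needs that uniform boundedness of cluster diameters in $G$ transfers to uniform boundedness in $G'$, which is automatic whenever $\phi$ admits a Lipschitz algebraic inverse, in particular whenever $\phi$ is an isomorphism of finitely generated groups (any group isomorphism between such groups is bi-Lipschitz in word metrics).

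The main obstacle is precisely this diameter transfer. In the controlled setting the required conversion is supplied by the explicit hypothesis $\|x\|_{G'}\le \|\phi(x)\|_G+r$, once one exploits the homomorphism property to convert the norm bound on $x^{-1}y$ into a two-point distance bound on the pair $(x,y)$. Everything else, namely the multi-valued coloring structure, the multiplicity count, and the $\lambda$-connectedness of clusters, transports trivially along $\phi$, so the only non-formal ingredient is this single use of the extra inequality together with the homomorphism identity $d_{G'}(x,y)=\|x^{-1}y\|_{G'}$.
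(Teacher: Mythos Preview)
Your pullback approach is exactly what the paper does: the first part is reduced to \propref{p:1-Lip} (preimages of uniformly bounded $\lambda$-disjoint families remain uniformly bounded and $\lambda$-disjoint), and the second to \propref{p:subKO}. Your remark that the uncontrolled inequality needs $\phi$ to be an isomorphism, not merely a homomorphism, is well taken and matches the hypothesis of \propref{p:1-Lip}.

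There is, however, a bookkeeping slip in your controlled argument. You correctly derive
\[
\diam_{G'}(C)\le \diam_G(C_G)+r.
\]
But if you start from a $G$-coloring with control $D+r$, so that $\diam_G(C_G)\le D(m,\lambda)+r$, this gives $\diam_{G'}(C)\le D(m,\lambda)+2r$, i.e.\ control $D+2r$ on $G'$, not control $D$. No ``reindexing'' repairs this: what your computation actually proves is
\[
\diko{(\lambda,D+r)}(G')\ \le\ \diko{(\lambda,D)}(G),
\]
with the $+r$ on the $G'$ side rather than the $G$ side. This is precisely what \propref{p:subKO} yields (take $c=c_1=1$, $r=0$, $r_1=r$ there), and it is the form the paper actually applies in the proof of \propref{main prop}, where one concludes $\diko{(\lambda,\bar D+4\lambda+1)}(B^{2\lambda+1},\rho)\le\diko{(\lambda,\bar D)}(B^{2\lambda+1},\ell_1)$. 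So the displayed inequality in the statement evidently has the $+r$ on the wrong side; your argument is correct, and the conclusion should be stated accordingly.
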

\begin{proof} The first statement is proved the same way as Lemma \ref{p:1-Lip}. The second statement follows from Lemma \ref{p:subKO}.
\end{proof}

\begin{lemma}\label{l:sep} Let $G$ be a (not necessarily finitely generated) group with a proper left invariant metric
$\dist\left(.,.\right)$. Let $U$ be the ball {in} $G$ of radius $\lambda$ {centered at $e\in G$} and let $H$ be a subgroup containing $U$. Then left
cosets of $H$ are $\lambda$-separated.
\end{lemma}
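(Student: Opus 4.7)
The plan is to reduce everything to the element $x^{-1}y$ via left invariance of the metric, and then exploit the hypothesis $U \subseteq H$ together with the subgroup property of $H$.

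More concretely, I would fix two points $x \in g_1 H$ and $y \in g_2 H$ with $g_1 H \neq g_2 H$, writing $x = g_1 h_1$ and $y = g_2 h_2$ for some $h_1, h_2 \in H$. The goal is to show $\dist(x,y) > \lambda$. The first step is to use left invariance of $\dist$ to rewrite $\dist(x,y) = \dist(e, x^{-1}y)$, so the question reduces to estimating the norm of the single element $x^{-1}y = h_1^{-1} g_1^{-1} g_2 h_2$.

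The next step is to argue that $x^{-1}y$ cannot lie in $H$. Indeed, if $h_1^{-1} g_1^{-1} g_2 h_2 \in H$, then multiplying on the left by $h_1$ and on the right by $h_2^{-1}$ (both elements of the subgroup $H$) gives $g_1^{-1} g_2 \in H$, contradicting $g_1 H \neq g_2 H$. Hence $x^{-1}y \notin H$, and since $U \subseteq H$ by hypothesis, also $x^{-1}y \notin U$. Because $U$ is the ball of radius $\lambda$ around $e$, this forces $\dist(e, x^{-1}y) > \lambda$, which combined with the first step yields $\dist(x,y) > \lambda$, as required.

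There is essentially no obstacle here; the proof is a short computation. The only care needed is to read off the correct strict inequality from ``$x^{-1}y$ lies outside the ball of radius $\lambda$'' (which matches the way $\lambda$-separation is used elsewhere in the paper, namely to prevent two such points from lying in the same $\lambda$-cluster), and to use left invariance in the correct direction so that the conjugating factors $h_1, h_2 \in H$ can be absorbed into $H$.
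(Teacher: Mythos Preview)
Your argument is correct and is essentially the same as the paper's: both use left invariance to reduce $\dist(x,y)$ to $\dist(e,x^{-1}y)$, then observe that $x^{-1}y\in U\subseteq H$ would force the two cosets to coincide. The only cosmetic difference is that the paper phrases it as a direct contradiction (assume $\dist\le\lambda$ and conclude the cosets are equal), whereas you first show $x^{-1}y\notin H$ and then deduce $x^{-1}y\notin U$.
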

\begin{proof}
Indeed, if $xh, x'h'$ are in two different left cosets ($h,h'\in H$, $x,x'\in G$) but
$$\dist\left(xh, x'h'\right)\le \lambda,$$
then $\dist\left(1,h\iv x\iv x'h'\right)\le \lambda$, so $h\iv x\iv x'h'\in U\subseteq  H$. Hence $x\iv x'\in H$, and $xH=x'H$, a
contradiction.
\end{proof}

We denote by $\rho$ the word metric on $B\wr\Z$ restricted to $B^{\left(\Z\right)}$.
\begin{prop}\label{main prop}
Suppose that $\diko{\left(\lambda,D\right)}\left(B\right)\le n\left(\lambda\right)$. Then $$\diko{\left(\lambda,D'\right)}\left(B^{\left(\Z\right)},\rho\right)\le \left(2\lambda+1\right)\left(n\left(\lambda\right)-1\right)+1$$
with $ D'\left(m,\lambda\right)=\left(2\lambda+1\right)D\left(m+2\lambda n\left(\lambda\right),\lambda\right)+4\lambda+1$.
\end{prop}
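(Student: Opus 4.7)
The plan is to reduce the problem to coloring the single subgroup $H = B^{([-\lambda,\lambda])} \subset B^{(\Z)}$, which is isomorphic as a group to $B^{2\lambda+1}$, and then invoke Proposition~\ref{KO-power}. The crucial geometric input is that the ball $U$ of radius $\lambda$ around the identity in $(B^{(\Z)},\rho)$ is contained in $H$: by Theorem~\ref{th:Parry}, an element of $B^{(\Z)}$ with support $\{a_1,\dots,a_n\}$ has $\rho$-norm at least the length of the shortest loop in $\Z$ based at $0$ visiting all $a_i$, which is bounded below by $2\max_i|a_i|$. Hence $\|f\|_\rho \le \lambda$ forces $\supp f \subset [-\lambda,\lambda]$, so $U \subset H$, and Lemma~\ref{l:sep} then guarantees that the cosets of $H$ in $B^{(\Z)}$ are $\lambda$-separated.

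Next I would compare the restricted metric $\rho|_H$ with the $\ell_1$-metric on $H \cong B^{2\lambda+1}$. By Corollary~\ref{c:Parry}(2), $d_{\ell_1} \le \rho|_H$. For the reverse direction, for $f,g\in H$ the support of $f\iv g$ lies in $[-\lambda,\lambda]$, so the shortest loop in $\Z$ based at $0$ visiting it has length at most $4\lambda$, and Theorem~\ref{th:Parry} gives $\rho(f,g) \le d_{\ell_1}(f,g) + 4\lambda$. Thus the two metrics differ by an additive constant depending only on $\lambda$.

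Now I apply Proposition~\ref{KO-power} with $k = 2\lambda+1$ to $(B^{2\lambda+1},\ell_1) \cong (H,\ell_1)$: for every $m \ge 0$ there is a coloring in exactly $m + (2\lambda+1)(n(\lambda)-1)+1$ colors in which every point receives at least $m+1$ colors and every $\lambda$-cluster has $\ell_1$-diameter at most $(2\lambda+1)D(m+2\lambda n(\lambda),\lambda)$. The same coloring is a valid KO-coloring of $(H,\rho|_H)$: since $d_{\ell_1}\le \rho$, every $\rho$-$\lambda$-cluster is also an $\ell_1$-$\lambda$-cluster, so its $\ell_1$-diameter is controlled; and converting back, its $\rho$-diameter is at most $(2\lambda+1)D(m+2\lambda n(\lambda),\lambda)+4\lambda < D'(m,\lambda)$.

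Finally, I extend the coloring from $H$ to all of $B^{(\Z)}$ via the group action: since $\rho$ is left-invariant, left multiplication by any $f \in B^{(\Z)}$ is a $\rho$-isometry from $H$ onto the coset $fH$, so I color each coset by transporting the coloring of $H$, reusing the same color palette. Every point of $B^{(\Z)}$ then receives at least $m+1$ colors, and since every $\lambda$-cluster is confined to a single coset (by the $\lambda$-separation from the first step), the cluster-diameter bound transfers intact. The main obstacle is securing the comparison $\rho|_H - d_{\ell_1} \le 4\lambda$ in the second step; it is this additive defect that produces the $+4\lambda+1$ in $D'$, and it is specific to the geometry of $\Z$, where the $\lambda$-ball is contained in the interval $[-\lambda,\lambda]$ of $(2\lambda+1)$ elements.
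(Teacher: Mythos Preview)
Your proof is correct and follows essentially the same route as the paper's: reduce to the subgroup $H=B^{[-\lambda,\lambda]}$ via Lemma~\ref{l:sep}, compare $\rho|_H$ with the $\ell_1$-metric (the paper packages this step as an application of Lemma~\ref{Lipschitz iso}, while you carry it out by hand), apply Lemma~\ref{KO-power} with $k=2\lambda+1$, and propagate the coloring to all cosets by left translation. Your additive defect $4\lambda$ is in fact slightly sharper than the paper's $4\lambda+1$, but both fit under the stated $D'$.
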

\begin{proof} Note that the $\lambda$-ball $U$ in $\left(B^{\left(\Z\right)},\rho\right)$ centered at $e$ generates a subgroup of $B^{2\lambda+1}=B^{[-\lambda,\lambda]\cap \Z}\subset B^{\left(\Z\right)}$.
By Lemma~\ref{l:sep} every $D'$-controlled $\lambda$-coloring  of $\left(B^{[-\lambda,\lambda]\cap \Z},\bar d\right)$ defines by means of translations by elements of $B^{(\Z)}$
a $D'$-controlled $\lambda$-coloring  of $B^{\left(\Z\right)}$ with the same set of colors. Additionally, if every element of $x\in B_{\lambda}$ is painted by at least $m+1$ color, the same holds true for all $y\in B^{\left(\Z\right)}$. Thus,
$$
\diko{\left(\lambda,D'\right)}\left(B^{\left(\Z\right)},\rho\right)=\diko{\left(\lambda, D'\right)}\left(B^{2\lambda+1},\rho\right).
$$
Since the identity map $i:\left(B^{2\lambda+1},\rho\right)\to\left(B^{s\lambda+1},\ell_1\right)$ is a 1-Lipschitz isomorphism satisfying the condition
$\|x\|_{\rho}\le\|x\|_{\ell_1}+4\lambda+1$
by Lemma~\ref{Lipschitz iso} we obtain
$$
\diko{\left(\lambda,\bar D+4\lambda+1\right)}\left(B^{2\lambda+1},\rho\right)\le\diko{\left(\lambda,\bar D\right)}\left(B^{2\lambda+1},\ell_1\right).$$
By Lemma~\ref{KO-power}
$$
\diko{\left(\lambda,\bar D\right)}\left(B^{2\lambda+1},\ell_1\right)\le\left(2\lambda+1\right)\left(n\left(\lambda\right)-1\right)+1
$$
with $\bar D\left(m,\lambda\right)=\left(2\lambda+1\right)D\left(m+2\lambda n\left(\lambda\right),\lambda\right)$.
\end{proof}

\begin{thm}\label{t:bk} For every $k\in\N$ there are constants $a_k$ and $b_k$ such that
\begin{equation}\label{e:ko}
\diko{\left(\lambda,D_k\right)}\left(B_k\right)\le a_k\lambda^k
\end{equation}
with $D_k\left(m,\lambda\right)=b_k\left(m+\lambda^k\right)\lambda^{k+1}$.
\end{thm}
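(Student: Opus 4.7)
The proof will proceed by induction on $k$, with the recursive structure $B_k = B_{k-1}\wr\Z$ dictating which earlier results to combine at each step.

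For the base case $k=0$, the space $B_0=\Z$ is handled directly by Example~\ref{example}, which gives $\diko{(\lambda,D_0)}(\Z)\le 2$ with $D_0(m,\lambda)=2(m+1)(\lambda+1)$. For $\lambda\ge 1$ this is absorbed into the target form $b_0(m+\lambda^0)\lambda^{0+1}=b_0(m+1)\lambda$ by choosing $a_0=2$ and $b_0=4$.

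For the inductive step, assume the estimate \eqref{e:ko} holds for $k-1$. The plan is to first control $B_{k-1}^{(\Z)}$, then pair with $\Z$, then pull back to $B_k$. First I would apply Lemma~\ref{main prop} with $B=B_{k-1}$ and $n(\lambda)=a_{k-1}\lambda^{k-1}$ to obtain
\[
\diko{(\lambda,D')}\bigl(B_{k-1}^{(\Z)},\rho\bigr)\le (2\lambda+1)(a_{k-1}\lambda^{k-1}-1)+1,
\]
where the new control $D'(m,\lambda)=(2\lambda+1)D_{k-1}(m+2\lambda a_{k-1}\lambda^{k-1},\lambda)+4\lambda+1$ still has the shape $O((m+\lambda^k)\lambda^{k+1})$ after substituting the inductive form of $D_{k-1}$. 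Next, I combine this with $\Z$ using Lemma~\ref{KO-product}, feeding the bound for $B_{k-1}^{(\Z)}$ into the first factor and the estimate from Example~\ref{example} into the second, obtaining
\[
\diko{(\lambda,D_\ast)}\bigl(B_{k-1}^{(\Z)}\times \Z\bigr)\le (2a_{k-1}+1)\lambda^k+1,
\]
with the summed control $D_\ast$ again of the form $b'(m+\lambda^k)\lambda^{k+1}$ for some $b'$ depending on $k$.

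To finish, I transfer the estimate from the product to the wreath product. Lemma~\ref{QI} provides a quasi-isometry with constants $c=3$, $r=c_1=1$, $r_1=0$ (in the direction $B_k\to B_{k-1}^{(\Z)}\times\Z$), and Lemma~\ref{p:subKO} then yields
\[
\diko{(\lambda,D_\ast)}(B_k)\le \diko{(3\lambda,D_\ast)}\bigl(B_{k-1}^{(\Z)}\times\Z\bigr)\le (2a_{k-1}+1)(3\lambda)^k+1.
\]
Setting $a_k=(2a_{k-1}+2)3^k$ gives the claimed dimension bound, and the control becomes $D_\ast(m,3\lambda)$, which is still of the form $b_k(m+\lambda^k)\lambda^{k+1}$ after absorbing the factors $3^{k+1}$ into $b_k$.

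The main technical obstacle is purely bookkeeping: Lemma~\ref{main prop}, Lemma~\ref{KO-product} and Lemma~\ref{p:subKO} each shift the argument $m$ of the control function by a term that itself grows like $\lambda^k$, and each scales the control by a polynomial in $\lambda$. The inductive hypothesis is stated with $\lambda^{k-1}$ in the shift and $\lambda^k$ in the overall factor, so one must verify carefully that after the two shifts and the quasi-isometric pullback the resulting control still fits inside $b_k(m+\lambda^k)\lambda^{k+1}$ for some $b_k$ depending only on $k$ (for sufficiently large $\lambda$). The point is that every shift of $m$ contributes at most $O(\lambda^k)$ which can be absorbed into the $\lambda^k$ term already present, and every multiplicative factor of $\lambda$ or $3$ can be absorbed into a larger $b_k$; the recursion $a_k=(2a_{k-1}+2)3^k$ and an analogous recursion for $b_k$ then close the induction.
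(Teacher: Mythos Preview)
Your proposal is correct and follows essentially the same route as the paper's proof: induction on $k$ with the base case from Example~\ref{example}, then at each step Lemma~\ref{main prop} for $B_{k-1}^{(\Z)}$, Lemma~\ref{KO-product} to adjoin the $\Z$ factor, and Lemmas~\ref{QI} and~\ref{p:subKO} to pass from the product to the wreath product. The only differences are cosmetic (you induct from $k-1$ to $k$ rather than $k$ to $k+1$, and you track the constants a bit more explicitly); note also that Lemma~\ref{QI} actually gives $r=0$ rather than $r=1$, which only strengthens your estimate.
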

\begin{proof}
Induction on $k$. For $k=0$ this is Example~\ref{example}.

Assume that $
\diko{\left(\lambda,D_k\right)}\left(B_k\right)\le a_k\lambda^k$
with $D_k\left(m,\lambda\right)=b_k\left(m+\lambda^k\right)\lambda^{k+1}$.
By Lemma~\ref{main prop} and  induction assumption
$$\diko{\left(\lambda,D'\right)}\left(B_k^{\left(\Z\right)},\rho\right)\le\left(2\lambda+1\right)\left(a_k\lambda^k-1\right)+1$$
for $ D'\left(m,\lambda\right)=\left(2\lambda+1\right)D_k\left(m+2\lambda a_k\lambda^k,\lambda\right)y+4\lambda+1$.
By Lemma~\ref{KO-product} $$\diko{\left(\lambda,\bar D\right)}\left(\left(B_k^{\left(\Z\right)},\rho\right)\times\Z\right)\le\left(2\lambda+1\right)\left(a_k\lambda^k-1\right)+2 \le 4a_k\lambda^{k+1}$$
for $$\bar D\left(m,\lambda\right)=D'\left(m+2,\lambda\right)+2\left(m+4a_k\lambda^{k+1}+1\right)\left(\lambda+1\right)=$$
$$\left(2\lambda+1\right)D_k\left(m+2+2\lambda a_k\lambda^k,\lambda\right)+4\lambda+1+2\left(m+4a_k\lambda^{k+1}+1\right)\left(\lambda+1\right)$$
$$\le
3b_k\lambda\left(m+2+2\lambda a_k\lambda^k+\lambda^k\right)\lambda^{k+1}+4a_k\lambda\left(m+\lambda^{k+1}\right)\le 4a_kb_k\left(m+\lambda^{k+1}\right)\lambda^{k+2}.$$
By Lemma~\ref{QI} $\left(B_k^{\left(\Z\right)},\rho\right)\times\Z$ is quasi-isometric to $B_k\wr\Z=B_{k+1}$.
By Lemma \ref{p:subKO}, there are constants $a_{k+1}$ and $b_{k+1}$ such that $$\diko{\left(\lambda, D_{k+1}\right)}\left(B_k\wr\Z\right)\le a_{k+1}\lambda^{k+1}$$ with
$D_{k+1}\left(m,\lambda\right)=b_{k+1}\left(m+\lambda^{k+1}\right)\lambda^{k+2}$.
\end{proof}

\begin{cor}
The dimension growth of $B_k$ is at most $\lambda^k$.
\end{cor}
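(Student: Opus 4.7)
The plan is to deduce this corollary directly from Theorem \ref{t:bk} via Lemma \ref{connection}, which converts a KO-dimension bound into an ordinary dimension bound at the cost of dropping one color and specializing the control variable $m$ to $0$.

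First I would invoke Theorem \ref{t:bk} to obtain, for each $k$, constants $a_k,b_k>0$ such that
\[
\diko{(\lambda,D_k)}(B_k) \le a_k\lambda^k, \qquad D_k(m,\lambda) = b_k(m+\lambda^k)\lambda^{k+1}.
\]
Then I would apply the second inequality in Lemma \ref{connection} with the control function $D_k$ to get
\[
\di{(\lambda,D_k(0,\lambda))}(B_k) \le \diko{(\lambda,D_k)}(B_k) - 1 \le a_k\lambda^k - 1.
\]
Since $D_k(0,\lambda)=b_k\lambda^{2k+1}$ is a specific (polynomial) control function, this yields a $(\lambda, b_k\lambda^{2k+1})$-coloring of $B_k$ in at most $a_k\lambda^k$ colors.

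Finally, by the definition of $d_{B_k}(\lambda)=\inf_D \di{(\lambda,D)}(B_k)$, I take the infimum over all $D$ and conclude $d_{B_k}(\lambda)\le a_k\lambda^k-1$, so the growth of $d_{B_k}$ is at most that of $\lambda^k$, as claimed. There is no essential obstacle here: the work was done in Theorem \ref{t:bk}, and the only step is to pass from the KO-dimension (with its two-parameter control) to the ordinary dimension growth by specializing $m=0$, which is exactly what Lemma \ref{connection} provides.
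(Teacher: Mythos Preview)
Your proof is correct and follows exactly the same approach as the paper: set $m=0$ in Theorem~\ref{t:bk} and apply Lemma~\ref{connection}. The paper's proof is a single terse sentence, and you have simply filled in the details.
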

\begin{proof}
Take $m=0$ {in formula (\ref{e:ko}) of Theorem \ref{t:bk}} and apply Lemma~\ref{connection}.
\end{proof}

\begin{question} What is the dimension growth of $B_k$? The answer is not known even for $k=1$, i.e. for the group $\Z\wr \Z$.
\end{question}

\begin{cor} For every solvable subgroup $G$ of the R.Thompson group $F$, there exist polynomials $\Delta_G(\lambda)$ and $C_G(\lambda)$ such that $\di{(\lambda,\Delta_G(\lambda))}(G)$ is at most $C_G(\lambda)$.
\end{cor}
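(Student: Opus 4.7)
The plan is to reduce to the case of iterated wreath products $B_k$, whose polynomial dimension growth with polynomial control was just established. Specifically, Theorem~\ref{t:bk} combined with Lemma~\ref{connection} (taking $m=0$) yields
$$
\di{(\lambda,\, b_k\lambda^{2k+1})}(B_k) \le a_k\lambda^k,
$$
i.e.\ polynomial dimension growth of degree at most $k$ with polynomial control of degree $2k+1$.

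The key external ingredient I would invoke is a structural theorem for solvable subgroups of $F$: every finitely generated solvable subgroup $G\le F$ of derived length $\ell$ embeds, as an abstract group, into the iterated wreath product $B_k$ for some $k=k(\ell)$ depending only on $\ell$ (e.g.\ $k=\ell-1$, matching the fact that $B_k$ has derived length $k+1$). This is the content of the classification of solvable subgroups of PL-homeomorphism groups due to Bleak and of closely related diagram-group analysis. Moreover, by choosing a finite generating set of $G$ and expressing each generator as a fixed element of $B_k$, such an embedding $\phi\colon G\hookrightarrow B_k$ becomes a quasi-isometric embedding with constants depending only on $G$ (in fact, on $\ell$ and on the chosen generating sets).

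Granted the quasi-isometric embedding $\phi\colon G\to B_k$ with some constants $c,c_1,r,r_1$, I would apply Proposition~\ref{subgroup}: for every control $D$ and every $\lambda>r$,
$$
\di{\bigl((\lambda-r)/c,\, c_1(D+r_1)\bigr)}(G)\le \di{(\lambda,D)}(B_k).
$$
Plugging in $D=b_k\lambda^{2k+1}$ and reparametrising $\mu=(\lambda-r)/c$ (so $\lambda=c\mu+r$) gives
$$
\di{\bigl(\mu,\, c_1 b_k(c\mu+r)^{2k+1}+c_1 r_1\bigr)}(G)\le a_k(c\mu+r)^k,
$$
which is exactly a bound of the form $\di{(\mu,\Delta_G(\mu))}(G)\le C_G(\mu)$ with $\Delta_G$ and $C_G$ polynomials in $\mu$ of degrees $2k+1$ and $k$ respectively (with coefficients depending on $G$).

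The main obstacle is the structural step: producing the quasi-isometric embedding $\phi\colon G\hookrightarrow B_k$. All the metric transfer is then formal from Proposition~\ref{subgroup} and the already-established polynomial bound for $B_k$. If the abstract embedding is only guaranteed without explicit metric control, one recovers Lipschitz (hence quasi-isometric) behaviour by using any finite generating set of $G$ and bounding the $B_k$-word length of each generator; the resulting distortion constants then enter $\Delta_G$ and $C_G$ through $c,c_1,r,r_1$ above but do not affect polynomiality.
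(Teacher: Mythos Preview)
Your overall strategy coincides with the paper's one-line proof: invoke Bleak's structural theorem to embed the solvable subgroup into an iterated wreath product, then transfer the polynomial bound from Theorem~\ref{t:bk} via Lemma~\ref{connection}. The only cosmetic difference is that the paper quotes Bleak as giving an embedding into the direct power $(B_k)^m$ rather than into a single $B_k$; since $(B_k)^m$ sits inside $B_{k+1}$, or alternatively since Lemma~\ref{KO-power} handles finite powers directly, this is immaterial.

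There is, however, a real gap in your metric transfer. You claim the abstract embedding $\phi\colon G\hookrightarrow B_k$ is quasi-isometric because one can ``express each generator as a fixed element of $B_k$,'' and later write that one ``recovers Lipschitz (hence quasi-isometric) behaviour.'' This reasoning is incorrect: bounding the $B_k$-length of the generators of $G$ yields only the upper Lipschitz inequality $d_{B_k}(\phi(x),\phi(y))\le c\,d_G(x,y)$; it gives no lower bound of the form $\tfrac{1}{c_1}d_G(x,y)-r_1\le d_{B_k}(\phi(x),\phi(y))$, which is exactly what Lemma~\ref{subgroup} requires. An inclusion of a finitely generated subgroup into a finitely generated group is always a coarse embedding, so Lemma~\ref{p:6} does apply, but the distortion function $\rho_1$ need not be linear or even polynomial, and then the transferred control $\rho_1^{-1}(D)$ ceases to be polynomial. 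To close the argument you would have to show that the subgroups produced by Bleak's theorem sit undistorted (or at worst polynomially distorted) in $(B_k)^m$; the paper's own proof does not spell this step out either.
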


{\proof Indeed, by the result of Collin Bleak \cite{Bleak}, every solvable subgroup of $F$ embeds into the direct power $(B_k)^m$ for some $k, m$.\endproof}

\end{document}